\numberwithin{equation}{section}
\newtheoremstyle{thmlemcorr}{10pt}{10pt}{\itshape}{}{\bfseries}{.}{10pt}{{\thmname{#1}\thmnumber{ #2}\thmnote{ (#3)}}}
\newtheoremstyle{thmlemcorr*}{10pt}{10pt}{\itshape}{}{\bfseries}{.}\newline{{\thmname{#1}\thmnumber{ #2}\thmnote{ (#3)}}}
\newtheoremstyle{remexample}{10pt}{10pt}{}{}{\bfseries}{.}{10pt}{{\thmname{#1}\thmnumber{ #2}\thmnote{ (#3)}}}
\theoremstyle{thmlemcorr}
\newtheorem{theorem}{Theorem}
\numberwithin{theorem}{section}
\newtheorem{lemma}[theorem]{Lemma}
\newtheorem{corollary}[theorem]{Corollary}
\newtheorem{proposition}[theorem]{Proposition}
\newtheorem{definition}[theorem]{Definition}
\newtheorem{notation}[theorem]{Notation}
\theoremstyle{thmlemcorr*}
\newtheorem{theorem*}{Theorem}
\newtheorem{lemma*}[theorem]{Lemma}
\newtheorem{corollary*}[theorem]{Corollary}
\newtheorem{proposition*}[theorem]{Proposition}
\newtheorem{problem*}[theorem]{Problem}
\newtheorem{conjecture*}[theorem]{Conjecture}
\newtheorem{definition*}[theorem]{Definition}
\theoremstyle{remexample}
\newtheorem{remark}[theorem]{Remark}
\newcommand{\R}{\mathbb{R}}
\newcommand{\N}{\mathbb{N}}
\renewcommand\appendix{\par
        \renewcommand\thesection{A}
        \renewcommand\thesubsection{A\arabic{subsection}}
        \renewcommand\thetable{A\arabic{table}}}
\def\Xint#1{\mathchoice 
{\XXint\displaystyle\textstyle{#1}}%
{\XXint\textstyle\scriptstyle{#1}}%
{\XXint\scriptstyle\scriptscriptstyle{#1}}%
{\XXint\scriptscriptstyle\scriptscriptstyle{#1}}%
\!\int} 
\def\XXint#1#2#3{{\setbox0=\hbox{$#1{#2#3}{\int}$} 
\vcenter{\hbox{$#2#3$}}\kern-.5\wd0}}
\newcommand{\W}{\mathrm{W}}
\newcommand{\LL}{\mathrm{L}}
\newcommand{\st}{\ensuremath{\mathrm{ \hspace*{1mm}\textup{\textbf{:}}\hspace*{1mm} }}}
\newcommand{\dx}{\, \mathrm{d}x}
\newcommand{\dy}{\, \mathrm{d}y}
\newcommand{\dt}{\, \mathrm{d}t}
\newcommand{\dv}[1]{\, \mathrm{d}#1}
\renewcommand{\phi}{\varphi}
\def\Xint#1{\mathchoice
{\XXint\displaystyle\textstyle{#1}}%
{\XXint\textstyle\scriptstyle{#1}}%
{\XXint\scriptstyle\scriptscriptstyle{#1}}%
{\XXint\scriptscriptstyle\scriptscriptstyle{#1}}%
\!\int}
\def\XXint#1#2#3{{\setbox0=\hbox{$#1{#2#3}{\int}$}
\vcenter{\hbox{$#2#3$}}\kern-.5\wd0}}
\def\mint{\Xint=}
\def\mint{\Xint-}
\newcommand{\measurerestr}{%
  \,\raisebox{-.127ex}{\reflectbox{\rotatebox[origin=br]{-90}{$\lnot$}}}\,%
}
\let\oldtocsection=\tocsection
\let\oldtocsubsection=\tocsubsection
\let\oldtocsubsubsection=\tocsubsubsection
\renewcommand{\tocsection}[2]{\hspace{0em}\oldtocsection{#1}{#2}}
\renewcommand{\tocsubsection}[2]{\hspace{1em}\oldtocsubsection{#1}{#2}}
\renewcommand{\tocsubsubsection}[2]{\hspace{2em}\oldtocsubsubsection{#1}{#2}}
\begin{document}


\title[Boundary regularity and sufficient conditions for strong local minimizers]{Boundary regularity and sufficient conditions for strong local minimizers}
\author{Judith Campos Cordero}

\address{Institut f\"ur Mathematik, Universit\"at Augsburg, Universit\"atsstrasse 14, 86159, Augsburg, Germany}

\curraddr{ Universidad Aut\'onoma Metropolitana -- Iztapalapa, Departamento de Matem\'aticas, Av. San Rafael Atlixco No. 186 Col. Vicentina, C.P. 09340, Ciudad de M\'exico, M\'exico.}
\email{judith.campos@xanum.uam.mx}


\hypersetup{
  pdfauthor = {},
  pdftitle = {...},
  pdfsubject = {},
  pdfkeywords = {}
}


\begin{abstract}
In this paper we present a new proof of the sufficiency theorem for strong local minimizers concerning $C^1$-extremals at which the second variation is strictly positive. The results are presented  in the quasiconvex setting, in accordance with the original statement by Grabovsky and Mengesha (2009). The strategy that we follow relies on a decomposition theorem that allows to split a sequence of variations into its oscillating and its concentrating parts, as well as on a sufficiency result according to which smooth extremals are spatially-local minimizers. Furthermore, we prove partial regularity up to the boundary for strong local minimizers in the non-homogeneous case and a full regularity result for Lipschitz extremals with gradient of vanishing mean oscillation. As a consequence, we also establish a sufficiency result for this class of extremals.  The regularity results are established via a blow-up argument.
\noindent\textsc{}
\vspace{0.1cm}

\noindent\textsc{Keywords:} boundary regularity, quasiconvexity at the boundary, second variation, sufficient conditions, strong local minimizers, vanishing mean oscillation. \vspace{0.1cm}

\noindent\textsc{MSC (2010):}  35J60, 49K10, 49K20, 49N60\vspace{0.1cm}

\end{abstract}

%

    \global\let\newpagegood\newpage
 \global\let\newpagegood\newpage
    \global\let\newpage\relax

\maketitle

\setcounter{tocdepth}{3} 
\tableofcontents

\global\let\newpage\newpagegood


\section{Introduction}

It is well known that minimizers of variational integrals satisfy the Euler-Lagrange equations and the nonnegativity of the second variation. On the other hand, for non-convex integrands we are faced with the possibility of encountering local minimizers that are not globally minimizing the functional. Hence, an underlying problem in the Calculus of Variations has been to find sufficient conditions guaranteeing that an extremal at which the second variation is positive is in fact a strong local minimizer. 

This problem was solved by Weierstrass in the 19$^{\mathrm{th}}$ century for the case in which the admissible scalar functions have one single independent variable. Levi then provided a proof with a different method, which does not use the field theory originated from Weierstrass' work \cite{levisui}. The sufficiency result was later generalized by Hestenes for functions of several variables \cite{Hestenes}.

On the other hand, Meyers showed that the notion of quasiconvexity developed by Morrey \cite{Morrey} is in a suitable sense a necessary condition for  strong local minimizers \cite{Meyers}. Furthermore, Ball and Marsden established the concept of \textit{quasiconvexity at the free boundary} in \cite{BallMarsden}, where this was also shown to be a necessary condition for strong local minimizers satisfying mixed boundary conditions, that is, by allowing the minimizers to take free values on part of the boundary of their domain.  

The importance of obtaining an adequate set of sufficient conditions for strong local minima in the vectorial case was highly motivated by applications coming from materials science. Ball foresaw  that the natural way to extend Weierstrass condition to the vectorial case are the notions of quasiconvexity at the interior and at the free boundary. Furthermore, in \cite[Section 6.2]{Ballcv} it is conjectured that if a solution to the weak Euler-Lagrange equation is sufficiently smooth, then the strict positivity of the second variation together with suitable versions of the quasiconvexity in the interior and at the boundary should guarantee that the extremal furnishes a strong local minimizer. 

Further generalizations regarding Weierstrass problem were obtained by Taheri in \cite{Taheri}, where Hestenes' strategy is extended to the case of $\LL^p$-local minimizers. 

Regarding the vectorial problem, Zhang was able to exploit the quasiconvexity of the integrand to establish that smooth extremals are absolute minimizers in sufficiently small subsets of their domain \cite{Zhang}. In contrast, Kristensen and Taheri provided an example, motivated by the work in \cite{MyS}, in which it becomes clear that the Lipschitz regularity of an extremal is not enough to ensure strong local minimality, even under the assumptions of strong quasiconvexity and strict positivity of the second variation \cite[Corollary 7.3]{Kristensen}. 

It was only until the work of Grabovsky and Mengesha \cite{GyM} that a sufficiency theorem for $C^1$ extremals in the quasiconvex setting was settled in the multi-dimensional Calculus of Variations (see also \cite{GyM07}). Their strategy relies, on the one hand, on a decomposition theorem and the method presented in \cite{FonsPed} to prove it (see also \cite{Kdeclem,KristensenLSC}). In addition, a localization principle is established to prove that the concentrating part of a sequence of variations acts on the functional in a localized way, so that the quasiconvexity condition can come into play to show that this part of the sequence does not decrease the functional. That the same occurs with the oscillating part of the sequence of variations is shown using the positivity of the second variation. 

In this context, after a preliminary section the first objective of this paper is developed in Section \ref{SecGMI}. Here, we present an alternative strategy for the sufficiency theorem for $C^1$-extremals. The main purpose is to present the ideas behind the new proof and, hence, we restrict ourselves to the homogeneous case. However, we work under essentially the same assumptions than in \cite{GyM}, the only difference being that we remove the coercivity restriction for the case of Dirichlet boundary conditions when the integrand has quadratic growth. The method that we introduce here consists in exploiting the spatially-local minimality obtained by Zhang \cite{Zhang} and then, via a covering argument and a version of the Decomposition Theorem based in \cite[Lemma 1.7]{KristensenLSC}, we obtain the conclusion using an indirect approach. The use of Zhang's Theorem in the new proof replaces then the localization principle used in \cite{GyM} and enables us to show that both the oscillating and the concentrating part of a sequence of variations increase the functional. We remark that the strategy presented here can be adapted to more general functionals with lower order terms. Details on this and further generalizations of the sufficiency result concerning non-smooth domains can be found in \cite{JCCKK}. 

On the other hand, the regularity assumed a priori on the extremal must play a crucial role in any set of sufficient conditions for strong local minima. This becomes clear by the aforementioned Corollary 7.3 in \cite{Kristensen} which, in turn, is a consequence of the partial regularity theorem for strong local minimizers established in \cite[Theorem 4.1]{Kristensen}. Kristensen-Taheri's result extends Evans' partial regularity theorem for global minimizers  in \cite{Evanspr} to the case of strong local minima. The strategy of their proof is based in Evans' blow-up argument and the assumptions on the integrand are stated in great generality, motivated by the works by Acerbi and Fusco, \cite{AcFus,AcFusLocRegNonconvex}, Evans and Gariepy \cite{EvansGarblowup}, Fusco and Hutchinson  \cite{FusHut}, among others. However, Kristensen and Taheri perform a remarkable modification to establish the strong convergence of the blown-up sequence, given that the estimates available for global minimizers cannot be obtained in the same way for the case of strongly-local small variations. We observe that, considering this regularity result, as well as celebrated examples of singular minmizers (see \cite{NecasExample,SverYan}), it is still an underlying question in what way the regularity assumption on the extremal could be relaxed in the sufficiency theorem. 

Furthermore, Kristensen and Taheri establish in \cite[Theorem 6.1]{Kristensen} that extremals of quasiconvex functionals at which the second variation is strictly positive, are BMO-local minimizers for sets of variations uniformly bounded in $\W^{1,\infty}$. This strengthens the well known result that such extremals actually furnish weak local minimizers, and generalizes previous observations given by \cite{Firoozye} in the same direction. 

Motivated by these results, Sections \ref{SectLipExtBMO}-\ref{SectGralizedSuff} of this paper aim at providing a sufficiency theorem for which the extremal is not assumed, in principle, to be smooth, but only Lipschitz with gradient of vanishing mean oscillation. Given that the nature of an extremal as a solution to the weak Euler-Lagrange equation is not enough to improve its regularity, the course of action that we follow is to generalize first Theorem 6.1 from \cite{Kristensen} by removing the restriction on the boundedness of the allowed variations. We devote Section \ref{SectLipExtBMO} to this purpose, hence establishing that extremals at which the second variation is strictly positive are BMO-local minimizers. 

In addition, in Section \ref{SectBdryReg} we extend up to the boundary the partial regularity results from \cite{Kristensen} for $\W^{1,q}$-local minimizers. Furthermore, we also establish that Lipschitz BMO-local minimizers with derivative of vanishing mean oscillation are of class $C^1$ up to the boundary. In both situations we address the case of non-homogeneous integrands. 

Boundary regularity results for absolute minimizers of quasiconvex variational integrals had already been established in \cite{BeckVarInt}. Beck also treats the case of $\W^{1,p}$-local minimizers for integrands with $p$-growth \cite[Theorem 1.3]{BeckVarInt}. However, for $\W^{1,q}$-minimizers with $q>p$, the problem cannot be reduced to that of absolute minimizers (see \cite[Proposition 2.1]{Kristensen}). Hence, a suitable adaptation of the blown-up technique has to be introduced for the boundary regularity in this case. This is the core problem  that we address in Section \ref{SectBdryReg}.

Furthermore, the characterization of the regular subset of the boundary in terms of Lebesgue points that we give in Section \ref{SectRegularSet} can be applied to improve previous characterizations given in the literature for minimizers or solutions to nonlinear elliptic systems, as in \cite{Grotowskinonlin,Kronzbdryreg,BeckVarInt}. There, the regular points are characterized in terms of the mean oscillations of the total derivative with respect to the normal derivative of the solution, and not in terms of Lebesgue points (see also \cite[Remark 1.2]{BeckVarInt}). Other interesting developments regarding boundary regularity theory have been carried out in \cite{DGK,Beckregweaksols,KristMingBdryreg}.

The final section of the paper compiles the previous results to establish that, although  Lipschitz extremals may not be strong local minimizers, they are so if we assume, in addition, that their derivative is of vanishing mean oscillation.

\section{Preliminaries}
We consider functionals of the type
\begin{equation}
\mathcal{F}(u):=\underset{\Omega}{\int}F(x,\nabla u)\dx,
\end{equation}
where $\Omega$ is a suitably smooth bounded domain, $u$ belongs to a given Sobolev space and $F\colon\overline{\Omega}\times\R^{N\times n}\rightarrow\R$ is such that:
\begin{itemize}
\item[($\overline{\mathrm{H}}$0)]  $F(x,\cdot)$ is of class  $C^2$  in  $\R^{N\times n}$  for every  $x\in\overline{\Omega}$  and  $F_{zz}$  is continuous in  $\overline{\Omega}\times\R^{N\times n}$.
\item [($\overline{\mathrm{H}}$1)] There are $p\in (1,\infty)$ and  a constant $c_1>0$ such that, for every $(x,z)\in\overline{\Omega}\times\R^{N\times n}$,
\begin{equation*}
|F(x,z)|\leq c_1(1+|z|^p).
\end{equation*}
\item[($\overline{\mathrm{H}}$2)] $F$ is strongly $p$-quasiconvex, meaning that there is a constant $c_2>0$ such that, for every $(x_0,z)\in\Omega\times\R^{N\times n}$ and every $\varphi\in \W^{1,\infty}(Q,\R^N)$, it holds that
\begin{equation*}
c_2\underset{Q}{\int}|V(\nabla\varphi)|^2\dx\leq\underset{Q}{\int}F(x_0,z+\nabla \varphi)-F(x_0,z)\dx. 
\end{equation*}
\end{itemize}

The function $V$ is given by $V=V_{\frac{p}{2}}$, where for $\beta>0$ arbitrary and for $k\in\N^+,$ the auxiliary function $V_\beta$ is defined by
\begin{equation*}
V_\beta(\xi):=\left( 1+|\xi|^2\right)^{\frac{\beta-1}{2}}\xi.
\end{equation*}

We emphasize, however, that most results in the following sections will only be stated for the superquadratic case $p\geq 2$ and, furthermore, the proof of the sufficiency theorem appearing in Section \ref{SecGMI} is given only for homogeneous integrands, that is, with no $x$-dependence on $F$. The rest of the results are established in the non-homogeneous setting. 

In the following lemma we compile some standard properties of the function $V_\beta$. We remark that  we make no distinction in the notation  between different dimensions of the domain in which  $V_\beta$ is defined (as in part (ii) below). 

\begin{lemma}\label{lemmapropsV} Let $\beta>0,$ $2\leq p<\infty$ and $M>0$. Then, there is a constant $c>0$, depending only on $\beta$, such that for every $\xi,\eta\in\R^k$ and every $t\geq 0$, 
\begin{itemize}
\item[$\mathrm{(i)}$] $V_\beta(t)$ is non-decreasing in $[0,\infty)$;
\item[$\mathrm{(ii)}$] $|V_\beta(\xi)|=V_\beta(|\xi|)$;
\item[$\mathrm{(iii)}$]  $|V_\beta(\xi+\eta)|\leq\,c\left(|V_\beta(\xi)|+|V_\beta(\eta)|\right)$; 
\item[$\mathrm{(iv)}$]$|V(t\xi)|\leq\max\{t,t^{\frac{p}{2}}\}|V(\xi)|$;
\item[$\mathrm{(v)}$] $c(p)|\xi-\eta|\leq\frac{|V(\xi)-V(\eta)|}{\left(1+|\xi|^2+|\eta|^2\right)^{\frac{p-2}{4}}}\leq c(p,k)|\xi-\eta|$;
\item[$\mathrm{(vi)}$] $\left(1+|\xi|^2+|\eta|^2\right)^{\frac{p}{2}}\leq\,c\left(1+|V(\xi)|^2+|V(\eta)|^2\right)$;
\item[$\mathrm{(vii)}$] $\left|V_{p-1}(\xi)\right||\eta|\leq|V(\xi)|^2+|V(\eta)|^2$;
\item[$\mathrm{(viii)}$] Young's inequality is satisfied in the sense that, for every $\varepsilon>0$, there exists $c_\varepsilon>0$ such that $|V_{p-1}(\xi)||\eta|\leq\,\varepsilon|V(\xi)|^2+c_\varepsilon|V(\eta)|^2$;
\item[$\mathrm{(ix)}$] $\max \{|\xi|,|\xi|^{\frac{p}{2}}\}\leq|V(\xi)|\leq2^{\frac{p-2}{4}}\max\{|\xi|,|\xi|^{\frac{p}{2}}\}$;
\item[] $\frac{1}{2} \left( |\xi|^2+|\xi|^p\right)\leq|V(\xi)|^2\leq\,2^{\frac{p-2}{4}}\left( |\xi|^2+|\xi|^p\right)$;
\item[$\mathrm{(x)}$] $|V(\xi-\eta)|\leq c(p)|V(\xi)-V(\eta)|$;
\item[$\mathrm{(xi)}$] $|V(\xi)-V(\eta)|\leq c(k,p,M)|V(\xi-\eta)|$, provided $|\eta|\leq M$.
\end{itemize}  
\end{lemma}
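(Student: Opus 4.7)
The lemma is a standard compilation; the plan is to verify each assertion by direct computation from the explicit formula $V_\beta(\xi)=(1+|\xi|^2)^{(\beta-1)/2}\xi$, with (ix) playing the role of a master estimate from which (vi)--(viii), (x) and (xi) are deduced.

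For (i) I would compute $V_\beta'(t)=(1+t^2)^{(\beta-3)/2}(1+\beta t^2)\geq 0$, which is nonnegative for $\beta>0$ and $t\geq 0$; (ii) is immediate since the prefactor depends only on $|\xi|$. Items (iii) and (iv) both rest on splitting according to whether the argument has modulus $\leq 1$ or $\geq 1$: in the small-argument region $V_\beta$ is essentially linear, while in the large-argument region the monotonicity in (i) together with the elementary inequality $(a+b)^\gamma\leq c_\gamma(a^\gamma+b^\gamma)$ yields the subadditivity in (iii), and factoring $t^2$ out of $1+t^2|\xi|^2$ produces the $t^{p/2}$ factor in (iv). Item (ix) applies the same dichotomy to $(1+|\xi|^2)^{(p-2)/2}$: for $|\xi|\leq 1$ this prefactor lies in $[1,2^{(p-2)/2}]$, whereas for $|\xi|\geq 1$ it lies in $[|\xi|^{p-2},(2|\xi|^2)^{(p-2)/2}]$; multiplying by $|\xi|^2$ produces the claimed bounds by $|\xi|^2$ and $|\xi|^p$ respectively.

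The analytical core is the two-sided estimate (v). I would represent the difference via the fundamental theorem of calculus,
\[
V(\xi)-V(\eta)=\int_0^1 DV\bigl(\eta+s(\xi-\eta)\bigr)(\xi-\eta)\,\mathrm{d}s,
\]
compute $DV(\zeta)=(1+|\zeta|^2)^{(p-2)/4}\bigl(I+\tfrac{p-2}{2}\tfrac{\zeta\otimes\zeta}{1+|\zeta|^2}\bigr)$, and note that for $p\geq 2$ the bracketed matrix has eigenvalues bounded above and below by constants depending only on $p$. The desired estimate then reduces to the convexity-type observation that $(1+|\eta+s(\xi-\eta)|^2)^{(p-2)/4}$ is comparable, uniformly in $s\in[0,1]$, to $(1+|\xi|^2+|\eta|^2)^{(p-2)/4}$, which holds because the segment joining $\eta$ and $\xi$ lies in the ball of radius $|\xi|+|\eta|$.

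The remaining assertions follow swiftly. For (vi) the lower bound from (ix) gives $|V(\xi)|^2+|V(\eta)|^2\geq c(|\xi|^2+|\xi|^p+|\eta|^2+|\eta|^p)$, while $(1+|\xi|^2+|\eta|^2)^{p/2}\leq c(1+|\xi|^p+|\eta|^p)$. Items (vii) and (viii) are Young's inequality together with $|V_{p-1}(\xi)|\leq c(1+|\xi|^{p-1})$, which follows from the same small/large split used in (ix). For (x) one combines (v) with $(1+|\xi-\eta|^2)^{(p-2)/4}\leq c(1+|\xi|^2+|\eta|^2)^{(p-2)/4}$, and (xi) is the reverse inequality $(1+|\xi|^2+|\eta|^2)^{(p-2)/4}\leq c_M(1+|\xi-\eta|^2)^{(p-2)/4}$, which is where the hypothesis $|\eta|\leq M$ is used since then $|\xi|\leq|\xi-\eta|+M$. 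The main (if mild) obstacle is the consistent bookkeeping across the small/large argument dichotomy that transitions between the quadratic and the $p$-th power regimes of $|V|^2$; once this split is set up carefully, every assertion is an elementary manipulation.
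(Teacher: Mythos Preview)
Your proposal is correct and matches the approach the paper indicates: the paper does not give a detailed proof but states that the lemma follows from elementary computations obtained by splitting into the cases $|\xi|,|\eta|<1$ and $|\xi|,|\eta|\geq 1$, referring to \cite{AcFussubquad}, \cite[Lemma~2.1]{CarrFuscMing} and \cite[Lemma~2.1]{BeckVarInt} for details. Your case-split arguments for (i)--(iv) and (ix), the integral representation via $DV$ for (v), and the deductions of (vi)--(viii), (x), (xi) from these are exactly the standard computations carried out in those references.
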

\begin{remark}
With the exception of (ix)-(xi), all the above properties also hold if we consider $1<p<2$ in the definition of $V_\beta$. This will be used for some results in Section \ref{SectLipExtBMO}.   
\end{remark}

The proof of this lemma requires essentially elementary computations and consider  separately the cases $|\xi|,|\eta|<1$ and $|\xi|,|\eta|\geq 1$, with all the combinations that arise from these possibilities. We refer the reader to \cite{AcFussubquad}, \cite[Lemma 2.1]{CarrFuscMing} and  \cite[Lemma 2.1]{BeckVarInt} for further details.

We now compile  a list of symbols and conventions that will be used throughout the text. 
\begin{notation}
 $\R^{N\times n}$ denotes of real matrices of $N\times n$. When $N=1$, we simply write $\R^n$. 
\\
Given $x,y\in \R^{N\times n}$, $xy$ denotes the standard matrix multiplication between $x$ and $y$. 
\\
For $x,y\in\R^{N\times n}$ the scalar product is given by $\langle x,y\rangle=\mathrm{tr}(xy^T)$ and the Euclidean norm in $\R^{N\times n}$ is denoted by $|x|:=\langle x,x\rangle^\frac{1}{2}$. 
\\
The symbol $\mathrm{e_i}$ denotes the canonical vector in $\R^n$ whose $i$-th entry is $1$ and the rest are $0$. We use $\mathrm{I}_n$ to denote the identity matrix in $\R^{n \times n}$.  
\\
For a given vector $x\in\R^n$, $x_i:=\langle x,\mathrm{e_i}\rangle$. 
\\
For a set $A\subseteq\R^n$, $\overline{A}$ denotes its closure in $\R^n$, $\mathrm{int}(A)$ is the interior of $A$ and $\mathcal{L}^n(A)$ denotes its Lebesgue measure in $\R^n$. In addition, $\mathcal{H}^k(A)$ denotes its Hausdorff measure of order $k$. 
\\
$B(x_0,r)$ represents the open ball in $\R^n$ with center $x_0$ and radius $r>0$. If $x_0\in\R^{n-1}\times \{0\}$, we denote the upper half ball centred at $x_0$ and with radius $r>0$ by  
\begin{equation*}
B^+(x_0,r):=\{x\in B(x_0,r)\st \langle x,\mathrm{e_n}\rangle>0\}.
\end{equation*}
In addition, we denote $B[x_0,r]:=\overline{B(x_0,r)}$ and $B^+[x_0,r]:=\overline{B^+(x_0,r)}$.
\\
Given a bounded set $\omega\subseteq\R^n$ with $\mathcal{L}^n(\omega)>0$ and $f\in \LL^1(\omega,\R^N)$, we write
\begin{equation*}
(f)_\omega:=\underset{\omega}{\mint} f\dx=\frac{1}{\mathcal{L}^n(\omega)}\underset{\omega}{\int}f\dx.
\end{equation*}
If $\omega=B(x_0,r)$,  we abbreviate $(f)_{B(x_0,r)}=(f)_{x_0,r}$. This notation will also be used with one variant: when $\omega=\Omega\cap B(x_0,r)$ for some $x_0\in\overline{\Omega}$, we write $\Omega(x_0,r):=\Omega\cap B(x_0,r)$ and for $p\in [1,\infty)$, $f\in \LL^1(\Omega(x_0,r),\R^N)$ we will denote
\begin{equation*}
\underset{\Omega(x_0,r)}{\mint}|f-(f)_{x_0,r}|^p\dx:=\frac{1}{\mathcal{L}^n(\Omega(x_0,r))}\underset{\Omega(x_0,r)}{\int}\left|f-\frac{1}{\mathcal{L}^n(\Omega(x_0,r))}\underset{\Omega(x_0,r)}{\int}f\dy\right|^p\dx.
\end{equation*}
\end{notation}

Concerning $\Omega$, we assume that $\Gamma_D\subseteq\partial\Omega$ is a subset of $\partial\Omega$ such that the $(n-1)$-Hausdorff measure of the relative interior of $\Gamma_D$ is positive (so, in particular, $\Gamma_D\neq\emptyset$).

We define the admissible functions as follows. 

\begin{definition}\label{Defadmfunct}
Given $p\in (1,\infty)$,\footnote{The exponent $p$ will be related, in this case, to the growth condition imposed on the integrand $F$.} we define the \textbf{set of admissible functions} as
\begin{equation*}
\mathcal{A}:=\overline{\left\{u\in C^1({\overline{\Omega}},\R^N) \st u(x)=g(x)\,\forall x\in\Gamma_D\right\}^{\W^{1,p}}},
\end{equation*}
where the closure is taken in $\W^{1,p}(\Omega,\R^N)$ and $g$ is of class $C^1$ in some open set in $\R^n$ containing $\overline{\Gamma_D}$.
\end{definition}
We henceforth assume that $\Gamma_D$ is the interior of $\overline{\Gamma_D}$, relative to $\partial\Omega$. By defining $\Gamma_N:=\partial\Omega-\overline{\Gamma_D}$, it holds that $\Gamma_N$ is a relatively open subset of $\partial\Omega$ and $\partial\Omega=\Gamma_D\cup\overline{\Gamma_N}$. Indeed, if $x\in \partial\Omega-\overline{\Gamma_N}$, then $x$ has an open neighbourhood in $\partial\Omega$ that does not intersect $\Gamma_N$. Therefore, this neighbourhood must belong to the interior of $\overline{\Gamma_D}$, which is $\Gamma_D$. Assuming further that $\Gamma_D$ is Lipschitz and has, in turn, a Lipschitz boundary in $\partial\Omega$, we can conclude that if $u\in\mathcal{A}\cap C^1(\overline{\Omega},\R^N)$, then $u(x)=g(x)$ for all $x\in\overline{\Gamma_D}$.\footnote{See also \cite[Proposition A.2]{ADMDS}.}   

We emphasize that for the results in Sections \ref{SectLipExtBMO}-\ref{SectGralizedSuff} we assume $\Gamma_N=\emptyset$ and so, in that case, $\mathcal{A}=\W^{1,p}_g(\Omega,\R^N)$. 

\begin{definition}
We define the \textbf{space of variations} as the set 
\begin{equation*}
\mathrm{Var}(\mathcal{A}):=
\overline{\left\{
\varphi\in C^1(\overline{\Omega},\R^N)\st \varphi(x)=0 \,\forall x\in\Gamma_D
\right\}^{\W^{1,p}}}.
\end{equation*}
\end{definition}
We call $\mathrm{Var}(\mathcal{A})$ the space of variations because, for any $y_1,y_2\in\mathcal{A}$, we have $y_1-y_2\in \mathrm{Var}(\mathcal{A})$. More generally, given an open set $\omega\subseteq\R^n$ such that $\omega\cap\Omega\neq\emptyset$, we consider the following space of variations defined in $\omega$, which is naturally embedded in $\mathrm{Var}(\mathcal{A})$. 

\begin{equation*}
\mathrm{Var}(\omega,\R^N):=\overline{\left\{
\varphi\in C^1(\overline{\omega},\R^N)\st \varphi(x)=0\,\forall x\in (\Gamma_D\cap{\overline{\omega}})\cup(\partial\omega\cap\Omega)
\right\}^{\W^{1,p}}}.
\end{equation*}
Observe that, given $\varphi\in\mathrm{Var}(\omega,\R^N)$, by extending $\varphi$ to $\Omega$ so that it takes the value of $0$ in $\Omega\backslash\omega$, we can assume that $\varphi\in\mathrm{Var}(\mathcal{A})$. We remark that, with this notation, $\mathrm{Var}(\mathcal{A})=\mathrm{Var}(\Omega,\R^N)$.

We now recall the notions of weak and strong local minimizer. 
\begin{definition}\label{DefWeStlocmin}
Let $u\in \W^{1,p}(\Omega,\R^N)$. We say that $u$ is a \textbf{weak local $F$-minimizer} if and only if there is a $\delta>0$ such that
\begin{equation}\label{minimality}
\underset{\Omega}{\int} F(x,\nabla u)\dx\leq \underset{\Omega}{\int} F(x,\nabla u+\nabla \varphi)\dx
\end{equation}
for every $\varphi$ in $\mathrm{Var}(\mathcal{A})$ with $\|\nabla\varphi\|_{\LL^\infty}<\delta$.
\\
\\
On the other hand, we say that $u$ is a \textbf{strong local $F$-minimizer} if and only if there is a $\delta>0$ such that (\ref{minimality}) holds for every $\varphi$ in $\mathrm{Var}(\Omega,\R^N)$ with $\|\varphi\|_{\LL^\infty}<\delta$. For $1<p<\infty$ the notion of strong local minimizer can be generalized by considering the $\LL^p$ or the $\W^{1,p}$ norm of the variations. In those cases we say, respectively, that $u$ is an $\LL^p$\textbf{-local minimizer} or a $\W^{1,p}$\textbf{-local minimizer}.
\end{definition}

We also use the following terminology concerning  solutions to the Weak Euler-Lagrange equation. 
\begin{definition}\label{defextremalMixedBdry}
Let $u\in \W^{1,p}(\Omega,\R^N)$. We say that $u$ is an \textbf{$F$-extremal} if and only if 
\begin{equation*}
\underset{\Omega}{\int} \left\langle F_z(x,\nabla u),\nabla\varphi\right\rangle\dx=0
\end{equation*}
for every $\varphi\in \mathrm{Var}(\Omega,\R^N)$.
\end{definition}
It is well known in the Calculus of Variations that local minimizers are $F$-extremals under the assumed conditions. 
The last part of this section is devoted to fixing the terminology concerning Young measures, that we will use extensively in Section \ref{SecGMI}. 
\begin{notation}
Let $\mathcal{B}(E)$ be the Borel $\sigma$-algebra of all the Borel sets contained in the set with finite Lebesgue measure $E$. We denote
\begin{equation*}
\mathcal{M}(E,\R^d):=\{\mu\colon\mathcal{B}(E)\rightarrow\R^d\st \mu  \text{ is a bounded Borel measure}\}
\end{equation*}
and
\begin{equation*}
\mathcal{M}^+(E)=\mathcal{M}^+(E,\R):=\{\mu\in \mathcal{M}(E,\R)\st \mu(B)\geq 0 \text{ for every Borel set } B\subseteq\Omega\}.
\end{equation*}
In addition, we use the following notation for the space of \textbf{probability measures} on $E$.
\begin{equation*}
\mathcal{M}^+_1(E):=\{\mu\in \mathcal{M}^+(E)\st \mu(E)=1\}.
\end{equation*}
\end{notation}
The following definitions finally aim at establishing the concept of Young measure.
\begin{definition}
Let $\nu\colon E\rightarrow \mathcal{M}(\R^d,\R)$. We say that $\nu$ is \textbf{weakly$^*$-measurable} if and only if the mapping
\begin{equation*}
x\in E\mapsto \langle\phi,\nu(x)\rangle=\underset{\R^d}{\int}\Phi(y)\dv{}\nu(x)\dy
\end{equation*}
is Borel measurable for all $\Phi\in C^0_0(\R^d)$, where $C^0_0(\R^d)$ is the space of continuous functions that vanish at infinity.
\end{definition}
\begin{definition}
Let $\mu\in\mathcal{M}^+(E)$ and $\nu\colon E\rightarrow\mathcal{M}^+_1(\R^d)$ be weakly$^*$-measurable. Then the generalized product measure $\mu\otimes\nu(x)$ given by
\begin{align*}
\langle\Phi,\mu\otimes\nu(x)\rangle&:=\underset{\Omega\times\R^d}{\int}\Phi \dv{}(\mu\otimes\nu(x))
=\underset{\Omega}{\int}\underset{\R^d}{\int}\Phi(x,z)\dv{}\nu(x)(z)\dv{}\mu(x)
\end{align*}
for $\Phi\in C^0_0(E\times\R^d)$, is called a \textbf{$\mu$-Young measure on $E$ with target $\R^d$}. 
\end{definition}
\begin{remark}
Henceforth, we write a weakly$^*$ measurable map $\nu\colon\Omega\rightarrow\mathcal{M}^+_1(\R^d)$ as a  parametrized family of probability measures, for which we use the notation $(\nu_x)_{x\in\Omega}$, with $\nu_x:=\nu(x)$. 
On the other hand, we will suppress $\mu$ from the notation and we will just call the family $(\nu_x)_{x\in\Omega}$ a Young measure. 
\end{remark}
We conclude this preliminary section with the following terminology regarding Young measures generated by sequences of measurable functions. 
\begin{definition}
Let $(f_j)$ be a sequence of measurable maps defined on a Lebesgue measurable set $\Omega$ such that $\mathcal{L}^n(\Omega)<\infty$ and let $(\mu_x)_{x\in\Omega}\subseteq\mathcal{M}_1^+(\R^d)$ be a measurable family. We say that $f_j$ \textbf{generates the Young Measure $(\mu_x)_{x\in\Omega}$} if and only if $\mathcal{L}^n\otimes\delta_{f_j(x)}\overset{*}{\rightharpoonup}\mathcal{L}^n\otimes\mu_x$ in $C_0^0(\Omega\times\R^d)^*$, where $\delta_{f_j(x)}$ stands for the Dirac measure concentrated at $f_j(x)$. We denote this by $f_j\overset{Y}{\longrightarrow}(\mu_x)_{x\in\Omega}$.
\end{definition}
We refer the reader to \cite{FonsMM,muller1999variational} for nice compilations of the theory of Young measures. 

\section{A new proof of Grabovsky-Mengesha sufficiency theorem for strong local minimizers in the homogeneous case}\label{SecGMI}

In this section we provide an alternative strategy for the sufficiency theorem established by Grabovsky and Mengesha in \cite{GyM}. 

The proof relies mainly on two results. The first one concerns a theorem by Zhang according to which smooth extremals minimize the functional when restricted to small subsets of $\Omega$ \cite{Zhang}. On the other hand, we use Kristensen's Decomposition Theorem  \cite{Kdeclem,KristensenLSC} (see also \cite{FonsPed}) to exploit the positivity of the second variation and relate it to the behaviour of a shifted functional acting on a suitable sequence of variations. This strategy is discussed in detail in Section  \ref{SectNewProofSuff}.

We emphasize that the result presented here concerns exclusively the case of homogeneous integrands, as it was originally developed in \cite{JCCThesis}. However, the ideas involved can be adapted to consider more general functionals of the type
\begin{equation*}
\mathcal{F}(u):=\underset{\Omega}{\int}F(x,u,\nabla u)\dx,
\end{equation*}
as discussed in Remark \ref{RemarkSuffNonhom}.

The assumptions that we make on the integrand in this section are the homogeneous version of hypotheses $(\mathrm{\overline{H}}0)$-$(\mathrm{\overline{H}}2)$. We restate them here as follows. 
\begin{itemize}
\item[$\mathrm{(H0)}$] $F\in C^2(\R^{N\times n})$;
\item[$\mathrm{(H1)}$] $F$ has $p$-growth, i.e., for some fixed $p\in(1,\infty)$ there exists a constant $c_1>0$ such that, for every $z\in\R^{N\times n}$, $|F(z)|\leq c_1(1+|z|^p)$ and 
\item [$\mathrm{(H2)}$] $F$ is strongly $p$-quasiconvex, meaning that there is $c_2>0$ such that, for every $z\in \R^{N\times n}$ and every $\varphi \in \W^{1,\infty}_0(Q, \R^N)$, it holds that
\begin{equation*}
c_2\underset{Q}{\int}|V(\nabla\varphi)|^2 \dx\leq \underset{Q}{\int}\bigl(F(z+\nabla{\varphi})-F(z)\bigr)\dx.
\end{equation*}
\end{itemize}

\subsection{Quasiconvexity at the free boundary}\label{SectQCbdry}
For a quasiconvexity-based sufficiency result comprising admissible functions with free boundary values, the notion of quasiconvexity at the boundary becomes necessary.  

This concept was defined by Ball and Marsden in \cite{BallMarsden}, where it was established  that, under mixed boundary conditions allowing a minimizer  $\overline{u}$ to take free values on part of the boundary, a necessary condition related to Morrey's notion of quasiconvexity must be satisfied. 
%

For this reason, in addition to (H0)-(H2), we assume in this section that the integrand $F$ is \textbf{strongly quasiconvex on the free boundary}, meaning that, for a constant $c_2>0$,
\begin{equation}
\tag{H2'}
c_2\underset{B^-_{{\mathrm{n}(x_0)}}}{\int}|V\left(\nabla\varphi\right)|^2\dx\leq \underset{B^-_{{\mathrm{n}(x_0)}}}{\int}(F(\nabla u(x_0)+\nabla{\varphi})-F(\nabla u(x_0)))\dx
\end{equation}
for all $\varphi\in \mathrm{V}_{{\mathrm{n}(x_0)}}$, where
\begin{equation}\label{admissvars}
\mathrm{V}_{\mathrm{n}(x_0)}:=\left\{\varphi\in C^\infty \left(\overline{B^-_{\mathrm{n}(x_0)}},\R^N\right)\st\varphi(x)=0\mbox{ on } (\partial B(x_0,1))\cap \overline{B^-_{\mathrm{n}(x_0)}}\right\}
\end{equation}
and ${\mathrm{n}(x_0)}$ is the outer unit normal to $\partial\Omega$ at $x_0\in \Gamma_N$. Here, $B^-_{\mathrm{n}(x_0)}$ is the half of the ball $B(x_0,1)$ that lies in the half plane $\{z\in\R^n\st \langle z-x_0,{\mathrm{n}(x_0)}\rangle <0\}$. 

The spirit in which this condition is shown to be necessary for strong local minima is the same in which the quasiconvexity at the interior is also proven to hold when in presence of strong local minimizers.\footnote{$F$ is quaxiconvex at an interior point $x_0\in \Omega$ if $\underset{B}{\int}F(\nabla u(x_0)+\nabla \varphi)-F(\nabla u(x_0))\dx\geq0$ for all $\varphi\in \W^{1,p}_0(B,\R^N)$.} We observe, however, that the quasiconvexity at the boundary differs from the one in the interior in the sense that it is not anymore a convexity notion. It is enough to recall, for example, that for a convex integrand $F$ the quasiconvexity in the interior can be seen as a straightforward consequence of Jensen's inequality for probability measures. However, given $x_0\in \Gamma_N$ and $\varphi\in \mathrm{V}_{{\mathrm{n}(x_0)}}$, we can follow the same ideas if we consider the probability measure defined on the space of matrices $\R^{N\times n}$ by 
\begin{equation*}
\langle \Phi,\nu_{\varphi,x_0} \rangle:=\underset{B_{\mathrm{n}(x_0)}^-}{\mint}\Phi(\nabla u(x_0)+\nabla\varphi(x))\dx.
\end{equation*}
We then observe that the centre of mass of this probability measure is given by
\begin{equation*}
\overline{\nu}_{\varphi,x_0}=\underset{R^{N\times n}}{\int}z\dv{}\nu_{\varphi,x_0}(z)=\nabla u(x_0)+\underset{B_{\mathrm{n}(x_0)}^-}{\mint}\nabla\varphi(x)\dx.
\end{equation*}
Here, $\underset{B_{\mathrm{n}(x_0)}^-}{\mint}\nabla\varphi(x)\dx\neq 0$ in general, as $\varphi\in \mathrm{V}_{\mathrm{n}(x_0)}$ and it is not necessarily $0$ at the boundary. On the other hand,  Jensen's inequality implies, for a convex integrand $F$, that
\begin{equation*}
F(\overline{\nu}_{\varphi,x_0})\leq \underset{\R^{N\times n}}{\int}F(z)\dv{}\nu_{\varphi,x_0}=\underset{B_{\mathrm{n}(x_0)}^-}{\mint}F(\nabla u(x_0)+\nabla\varphi(x))\dx.
\end{equation*}
This means, in particular, that the notion of quasiconvexity at the free boundary doesn't follow from convexity in the same way that quasiconvexity at the interior does. 

Following the spirit in which the calculations above are made, we can consider the following specific example of a convex function that is not quasiconvex at the free boundary. Let $F:\R^2\rightarrow\R$ be given by 
\begin{equation*}
F(u,v):=v
\end{equation*}
and let $\Omega:=B_{\mathrm{n}(0,1)}^-$ be the half of the unit ball centred at zero that lies below the ${x}$-axis. We consider the mixed boundary conditions according to which the admissible test functions are precisely those in the set $\mathrm{V}_{\mathrm{n}(0)}$  defined in (\ref{admissvars}). It is then clear that the function $\varphi:\R^2\rightarrow\R$ defined as
\begin{equation*}
\varphi(x,y)=x^2+y^2-1
\end{equation*}
is such that $\varphi\in \mathrm{V}_{\mathrm{n}(0)}$. However, the quasiconvexity at the boundary condition is not satisfied for the convex function $F$ at the point $(0,0)$, which lies on the free boundary, since this would imply that, for the particular $\varphi$ that we defined above, 
\begin{equation*}
0=F(0,0)\leq \underset{B_{\mathrm{n}(0,1)}^-}{\mint}F(\nabla\varphi(x,y))\dx\dy=\underset{B_{\mathrm{n}(0,1)}^-}{\mint}2y\dx\dy,
\end{equation*}
which is a contradiction by definition of $B_{\mathrm{n}(0,1)}^-$.

Regarding how to interpret the notion of quasiconvexity at the free boundary, we observe that, by differentiating $t\mapsto\underset{B_{\mathrm{n}(x_0)}^-}{\int}F(\nabla u(x_0)+t\nabla \varphi(x))\dx$ we obtain, from the \textit{quasiconvexity at the boundary} condition, that it implies 
\begin{align}\label{nonlinearNeum}
0=&\underset{B_{\mathrm{n}(x_0)}^-}{\int}\left\langle F'(\nabla u(x_0)),\nabla\varphi(x)\right\rangle\dx=\left\langle F'(\nabla u(x_0)),\underset{\partial B_{\mathrm{n}(x_0)}^-}{\int}\varphi\otimes {\mathrm{n}(x)}\dv{}\sigma(x)\right\rangle\notag\\
=&\left\langle F'(\nabla u(x_0)),\underset{\partial B_{\mathrm{n}(x_0)}^-}{\int}\varphi\dv{}\sigma(x)\otimes {\mathrm{n}(x_0)}\right\rangle,
\end{align}
where the second identity above follows from the Divergence Theorem. 

This enables us to understand the quasiconvexity at the boundary as a \textit{non-linear variational Neumann condition.} Indeed, since (\ref{nonlinearNeum}) holds for every $\varphi\in  \mathrm{V}_{{\mathrm{n}(x_0)}}$, it in turn implies the Neumann boundary condition $\left\langle F'(\nabla u(x_0)), \mathrm{n}(x_0)\right\rangle=0$ in $\R^N$. However, as pointed out by Ball and Marsden with an example in \cite{BallMarsden}, the quasiconvexity at the boundary is still a stronger notion. 

We remark here that, as it turns out, the quasiconvexity on the free boundary is one of the sufficient conditions first established in \cite{GyM} ensuring that a $C^1$ extremal furnishes a strong local minimizer. This was conjectured by Ball in \cite[Section 6.2]{Ballcv}. See also \cite{kruzikBQC,kruzikBNL,mielkeBQC,JCCKK} for further works related to the notion of quasiconvexity at the boundary.

\subsection{The sufficiency theorem for strong local minimizers}
The precise statement of the sufficiency theorem that we establish in this section is the following. 
\begin{theorem}\label{theoGM}
Let ${F\colon\R^{N\times n}\rightarrow \R}$ such that it satisfies $\mathrm{(H0)-(H2)}$ for some $p\in[2,\infty)$. Let $u\in C^1\left(\overline{\Omega},\R^N\right)$ be an $F$-extremal and assume that the second variation at $u$ is strictly positive, meaning that there is a constant $c_3>0$ such that
\begin{equation}\label{secvar}
c_3\underset{\Omega}{\int}|\nabla\varphi|^2\dx\leq\underset{\Omega}{\int} F''(\nabla u)[\nabla\varphi,\nabla\varphi]\dx
\end{equation}
for all $\varphi\in \mathrm{Var}(\Omega,\R^N)$. In addition, if the free portion of the boundary is such that $\Gamma_N\neq\emptyset$, suppose that $\textup{(H2')}$ holds. In this case, or if $p>2$, further assume that, for some constants $c_4,c_5>0$, 
\begin{equation}
\tag{H3}c_4\underset{\Omega}{\int}|\nabla \varphi|^p\dx-c_5\underset{\Omega}{\int}|\nabla\varphi|^2\dx\leq\underset{\Omega}{\int}\left(F(\nabla u+\nabla\varphi)-F(\nabla u)\right)\dx
\end{equation} for every $\varphi\in\mathrm{Var}(\Omega,\R^N)$.
Then, $u$ is an $\LL^p$-local $F$-minimizer. 
\end{theorem}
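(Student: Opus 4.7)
The plan is indirect: assume $u$ is not an $\LL^p$-local $F$-minimizer, so there exists a sequence $(\varphi_j)\subset\mathrm{Var}(\Omega,\R^N)$ with $\|\varphi_j\|_{\LL^p}\to 0$ and $\mathcal{F}(u+\varphi_j)<\mathcal{F}(u)$ for every $j$, and I aim to show $\liminf_j[\mathcal{F}(u+\varphi_j)-\mathcal{F}(u)]\geq 0$, which yields a contradiction. The first preparatory step is to upgrade $\LL^p$-smallness of $\varphi_j$ to $\W^{1,p}$-boundedness. Whenever (H3) is in force (that is, when $p>2$ or $\Gamma_N\neq\emptyset$) the failing inequality together with (H3) gives $c_4\|\nabla\varphi_j\|_{\LL^p}^p\leq c_5\|\nabla\varphi_j\|_{\LL^2}^2$, and since $p\geq 2$ H\"older's inequality bounds $\|\nabla\varphi_j\|_{\LL^p}$ uniformly. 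In the remaining Dirichlet case with $p=2$, (H2) applied on $\Omega$ at $z=0$ (by a standard scaling and density argument) combined with the upper bound (H1) yields the same conclusion. Passing to a subsequence one has $\varphi_j\rightharpoonup 0$ in $\W^{1,p}$.

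The second step is to apply Kristensen's decomposition theorem to produce a splitting $\varphi_j = g_j + b_j$ with $g_j,b_j\in\mathrm{Var}(\Omega,\R^N)$, both weakly converging to $0$ in $\W^{1,p}$, where $g_j$ is a Lipschitz truncation of $\varphi_j$ so that $(|V(\nabla g_j)|^2)_j$ is equi-integrable, and where $b_j$ satisfies $|\{x:b_j(x)\neq 0\}|\to 0$, encoding the concentrating part of the sequence. I then split
\begin{equation*}
\mathcal{F}(u+\varphi_j)-\mathcal{F}(u) \,=\, \bigl[\mathcal{F}(u+g_j)-\mathcal{F}(u)\bigr] \,+\, \bigl[\mathcal{F}(u+\varphi_j)-\mathcal{F}(u+g_j)\bigr],
\end{equation*}
and argue that each bracket is asymptotically non-negative.

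For the first bracket the strict positivity of the second variation is used. A second-order Taylor expansion of $F$ around $\nabla u$ annihilates the first-order term (because $u$ is an $F$-extremal) and leaves $\frac{1}{2}\int_\Omega F_{zz}(\nabla u)[\nabla g_j,\nabla g_j]\dx$ plus a remainder. Assumption (\ref{secvar}) bounds the quadratic term below by $c_3\|\nabla g_j\|_{\LL^2}^2$, while the continuity of $F_{zz}$, the fact that $\nabla u\in C^0(\overline{\Omega})$, and the equi-integrability of $(|V(\nabla g_j)|^2)_j$ force the remainder to be of order $o(\|\nabla g_j\|_{\LL^2}^2)$, so the first bracket is non-negative in the limit. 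For the second bracket I invoke Zhang's spatially-local minimality result (together with its boundary counterpart near $\Gamma_N$ obtained from (H2')): since $|\{b_j\neq 0\}|\to 0$, this set eventually fits inside finitely many (half-)balls of radius below Zhang's threshold, and on each such ball $u$ is an absolute minimizer of $\mathcal{F}$ against variations vanishing on the relative boundary. A covering argument, comparing $u+\varphi_j$ with $u+g_j$ as competitors sharing trace on the relative boundary of each ball, produces a non-negative contribution up to cross terms controlled by the estimates of the previous step.

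The step I expect to be the main obstacle is the clean management of the cross interaction between $g_j$ and $b_j$ in the second bracket. What naturally appears there is $F(\nabla u+\nabla g_j+\nabla b_j)-F(\nabla u+\nabla g_j)$, whereas Zhang's theorem gives minimality against the fixed reference $\nabla u$. One must therefore either transfer the minimality to the slightly-perturbed affine reference $\nabla u+\nabla g_j$ (which is $\LL^\infty$-close to $\nabla u$ on the concentration set, thanks to continuity of $\nabla u$ and the Lipschitz truncation producing $g_j$) or absorb the mismatch using (H1) and the Young-type estimate in Lemma \ref{lemmapropsV}(viii). Making this absorption uniform in $j$, in such a way that the resulting error is dominated by $\|\nabla g_j\|_{\LL^2}^2+\|V(\nabla b_j)\|_{\LL^2}^2$ with vanishing coefficient, is the technical heart of the argument and the place where the replacement of the Grabovsky--Mengesha localization principle by Zhang's theorem plays its decisive role.
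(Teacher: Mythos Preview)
Your outline identifies the right ingredients but has a genuine gap in the first bracket. You claim that equi-integrability of $(|V(\nabla g_j)|^2)$ forces the Taylor remainder $\int_\Omega\bigl[G(x,\nabla g_j)-\tfrac12 F''(\nabla u)[\nabla g_j,\nabla g_j]\bigr]\dx$ to be $o(\|\nabla g_j\|_{\LL^2}^2)$, where $G(x,z):=F(\nabla u(x)+z)-F(\nabla u(x))-\langle F'(\nabla u(x)),z\rangle$. This is false: for an oscillating sequence such as $g_j(x)=j^{-1}\sin(jx_1)v$ (suitably cut off), the gradients $\nabla g_j$ are bounded in $\LL^\infty$ (so equi-integrability is trivial) and converge weakly to $0$, yet by the Fundamental Theorem for Young Measures both $\int_\Omega G(x,\nabla g_j)\dx$ and $\|\nabla g_j\|_{\LL^2}^2$ converge to finite nonzero limits, and the remainder has no reason to vanish unless $F$ is quadratic. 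Equi-integrability lets you pass to Young-measure limits; it does \emph{not} make higher-order Taylor terms negligible relative to the quadratic one.

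What is missing is a \emph{normalization} step, and that in turn requires first proving $\|V(\nabla\varphi_j)\|_{\LL^2}\to 0$. The paper obtains this from Zhang's theorem, not applied on a shrinking support of $b_j$ as you suggest, but through a global covering estimate on all of $\Omega$ combined with a weak-$*$ measure argument. Only then does it set $\psi_k:=\alpha_k^{-1}\varphi_k$ with $\alpha_k=\|\nabla\varphi_k\|_{\LL^2}\to 0$, apply the decomposition to $\psi_k$, and study $\alpha_k^{-2}\int_\Omega G(x,\alpha_k\nabla\psi_k)\dx$: since $\alpha_k^{-2}G(x,\alpha_k z)\to\tfrac12 F''(\nabla u(x))[z,z]$ pointwise, the Young-measure limit of the rescaled functional \emph{is} the second-variation form, and (\ref{secvar}) combined with quasiconvexity forces the generated Young measure to be $\delta_0$, yielding the contradiction. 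Two smaller remarks: the decomposition as stated in the paper gives $\nabla b_k\to 0$ in measure, not $|\{b_k\neq 0\}|\to 0$; and Zhang's inequality enters via the global covering estimate applied to the full normalized sequence (and separately to $\alpha_k b_k$), with the cut-off errors absorbed by the measure argument, which is precisely how the cross-term difficulty you anticipate is handled.
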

\begin{remark}
If $p=2$ and $\Gamma_N=\emptyset$, the statement above provides a slightly stronger version of the original sufficiency theorem in \cite{GyM}, since it does not impose the coercivity condition (H3) on the integrand. However, it is not obvious whether this assumption can also be removed for the general superquadratic case.  

On the other hand, as discussed in \cite[S.3.2]{GyM}, it can be shown that (H3) also follows if we assume that $F$ is pointwise coercive: $c_4|z|^p-c_5\leq F(z)$ for all $z\in\R^{N\times n}$. However, (H3) is a more general assumption.
\end{remark}


\begin{remark}\label{RemarkSuffNonhom}
Under the hypotheses $(\mathrm{\overline{H}}0)$-$(\mathrm{\overline{H}}2)$ and a uniform continuity assumption (more general than $(\mathrm{HC})$ from Section \ref{SectBdryReg}), as well as by assuming the corresponding coercivity notion, namely 
\begin{equation}
\tag{$\mathrm{\overline{H}}3$}c_4\underset{\Omega}{\int}|\nabla \varphi|^p\dx-c_5\underset{\Omega}{\int}|\nabla\varphi|^2\dx\leq\underset{\Omega}{\int}\left(F(x,\nabla u+\nabla\varphi)-F(\nabla u)\right)\dx
\end{equation} for every $\varphi\in\mathrm{Var}(\Omega,\R^N)$,
the above sufficiency result also holds for  non-homogeneous integrands of the type $F(x,z)$ and extremals at which the second variation is positive, as in (\ref{uextremalVMO}) and (\ref{secvarposVMO}). The original statement of the result, as developed in \cite{GyM}, holds true for $\LL^\infty$-local minimizers of integrands of the type $F(x,u,z)$, so that $u$-dependence is also allowed. Further generalizations comprising integrands admitting lower order terms and non-smooth domains were recently obtained in \cite{JCCKK}. 
\end{remark}

We devote the rest of this section to prove Theorem \ref{theoGM} using the aforementioned alternative strategy. The first step will be to establish the two main ingredients of the proof, namely, a suitable version of Kristensen's Decomposition Theorem  and of Zhang's Theorem concerning spatially-local minimizers.

\subsection{The Decomposition Theorem}
Here we state the following result based on Kristensen's Decomposition Theorem \cite{KristensenLSC}. In its original form, the result states that given a uniformly bounded sequence in $\W^{1,p}$, it can be decomposed (up to a subsequence), into one part that carries the \textit{oscillations} and another carrying the \textit{concentrations}. The decomposition that we present here follows Kristensen's proof and provides a subtle extension by showing that if a bounded sequence $(u_j)$ in $\W^{1,2}$ is modified by multiplying each term by a (possibly different) scalar belonging to the unit interval $(0,1)$ and if the resulting subsequence, say $(\zeta_j)$, is bounded in $\W^{1,p}$ for some $p\geq 2$, then the respective decompositions in the spaces $\W^{1,2}$ and $\W^{1,p}$ can be obtained in such a way that they follow the same linear relations that $(u_j)$ and $(\zeta_j)$ satisfy. 

We remark that Kristensen's Decomposition Theorem is valid also for the case $1<p<2$, that we skip here although the proof remains the same than the one we will perform for the sequence $(u_j)$ in $\W^{1,2}$. In addition, we refer the reader to \cite[Theorem 8.1]{GyM}, in which Grabovsky and Mengesha modify the decomposition result from \cite{FonsPed} and on which the statement that we provide below is motivated. 

Finally, we emphasize that the reason we require the following version of the Decomposition Theorem is to control a sequence in $\W^{1,p}$ while normalizing it by both its $\W^{1,2}$ and its $\W^{1,p}$-norms in the proof of Theorem \ref{theoGM}.

\begin{theorem}[Decomposition Theorem]\label{DecLemma}
Let $\Omega\subseteq\R^n$ be a bounded Lipschitz domain and let $2\leq p<\infty$. Let $(u_j)$ be a sequence such that $u_j\rightharpoonup u$ in $\W^{1,2}(\Omega,\R^N)$, assume that $(r_j)$ is a sequence in $(0,1)$ and that $\zeta_j:=r_ju_j$ is bounded in $\W^{1,p}$. Then, there exist a subsequence $(u_{j_k})$ and sequences $(g_k)\subseteq C_c^\infty(\Omega,\R^N)$, $(b_k)\subseteq \W^{1,2}(\Omega,\R^N)$ such that 
\begin{itemize}
\item[\textup{(a)}] $g_k\rightharpoonup 0$ and $b_k\rightharpoonup 0$ in $\W^{1,2}(\Omega,\R^N)$;
\item[\textup{(b)}] $(|\nabla g_k|^2)$ is equiintegrable;
\item[\textup{(c)}] $\nabla b_k\rightarrow 0$ in measure and
\item[\textup{(d)}] $u_{j_k}=u+g_k+b_k$.
\end{itemize}
In addition, $(g_k)$ and $(b_k)$ can be taken so that, for a subsequence $(r_{k_j})$, if $s_k:=r_{k_j}g_k$ and $t_k:=r_{j_k}b_k$, then 
\begin{itemize}
\item[\textup{(a')}] $s_k\rightharpoonup 0$ and $t_k\rightharpoonup 0$ in $\W^{1,p}(\Omega,\R^N)$;
\item[\textup{(b')}] $(|\nabla s_k|^p)$ is equiintegrable and 
\item[\textup{(c')}] $\nabla t_k\rightarrow 0$ in measure.
\end{itemize}
\end{theorem}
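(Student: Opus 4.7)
The plan is to build both decompositions simultaneously from a single family of Lipschitz truncations of $v_j := u_j - u$, exploiting the positive homogeneity of the Acerbi--Fusco truncation operator, namely $T_{c\lambda}(c w) = c\, T_\lambda(w)$ for $c>0$. In this way, multiplying the $\W^{1,2}$-truncation of $v_j$ by $r_j$ automatically produces the Lipschitz truncation of $\zeta_j - r_j u$ at level $r_j\lambda_j$, and the two resulting decompositions are linked by precisely the linear scaling required by the statement.

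First I would pass to a subsequence so that $r_j \to r_\infty \in [0,1]$ and $\zeta_j \rightharpoonup \zeta$ in $\W^{1,p}(\Omega,\R^N)$. Rellich--Kondrachov compactness of $(u_j)$ in $\LL^2$, together with uniqueness of limits, forces $\zeta = r_\infty u$; when $r_\infty>0$ this yields $u\in\W^{1,p}$, and the degenerate case $r_\infty = 0$ is handled by working with $r_j u$ directly. Next I would apply the standard Lipschitz truncation to $v_j$: for each $\lambda>0$ there is $T_\lambda v_j \in \W^{1,\infty}$ with $\|\nabla T_\lambda v_j\|_{\LL^\infty}\leq C\lambda$, agreeing with $v_j$ outside a set $E_j^\lambda \subseteq \{M(|\nabla v_j|)>\lambda\}$ of measure at most $C\lambda^{-2}\|v_j\|_{\W^{1,2}}^2$. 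Setting $g_j := T_{\lambda_j} v_j$, $b_j := v_j - g_j$, $s_j := r_j g_j$, $t_j := r_j b_j$ for a sequence $\lambda_j\to\infty$ still to be chosen, part (d) is immediate, and parts (c) and (c') both follow because $\nabla b_j$ (hence $\nabla t_j$) is supported in $E_j^{\lambda_j}$, whose measure tends to zero.

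The heart of the proof is then the diagonal selection of $\lambda_j\to\infty$ yielding simultaneously the equiintegrabilities (b) of $(|\nabla g_j|^2)$ and (b') of $(|\nabla s_j|^p)$. On $E_j^{\lambda_j, c}$ one has $\nabla g_j = \nabla v_j$ and $\nabla s_j = \nabla\zeta_j - r_j\nabla u$, which are uniformly bounded in $\LL^2$ and $\LL^p$ respectively; on $E_j^{\lambda_j}$ the contributions are at most $\lambda_j^2|E_j^{\lambda_j}| \leq C$ and $r_j^p\lambda_j^p|E_j^{\lambda_j}| \leq Cr_j^p\lambda_j^{p-2}$. Running the usual de la Vall\'ee-Poussin / truncation argument from Kristensen's proof produces equiintegrability at the $\W^{1,2}$ scale for any $\lambda_j\to\infty$, while the $\W^{1,p}$ scale additionally forces $\lambda_j$ to grow slowly enough that $r_j^p\lambda_j^{p-2}\to 0$; both constraints can be met by a single diagonal subsequence. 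A mollification to land in $C_c^\infty(\Omega,\R^N)$, absorbing the small correction into $b_j$, together with a final subsequence, then deliver the weak convergences (a) and (a') from $v_j\rightharpoonup 0$ in $\W^{1,2}$ and $\zeta_j - r_j u \rightharpoonup 0$ in $\W^{1,p}$. The main obstacle I anticipate is precisely this coordinated choice of a single truncation level respecting both the $\W^{1,2}$ and $\W^{1,p}$ scales; this synchronisation is exactly what makes the present theorem a genuine strengthening of Kristensen's one-scale decomposition, and the remainder of the argument is a careful bookkeeping of which subsequence is extracted at each step.
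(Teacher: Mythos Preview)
Your route via Lipschitz truncation of $v_j = u_j - u$ (in the spirit of Fonseca--M\"uller--Pedregal, and of Grabovsky--Mengesha's own Theorem~8.1) is genuinely different from the paper's proof, which instead truncates the \emph{gradients} by the radial map $T_k(z) = z\min\{1,k/|z|\}$ and then recovers a gradient via the Helmholtz decomposition. The paper's choice pays off precisely at the two-scale step: the identity $rT_k(\xi)=T_{rk}(r\xi)$ together with the monotonicity $|T_{rk}(\cdot)|\le |T_k(\cdot)|$ for $r\le 1$ yield the pointwise bound $|r_{j_k}T_k(\nabla u_{j_k})|\le |T_k(\nabla\zeta_{j_k})|$, so the $p$-equiintegrability of the scaled truncation is inherited directly from the \emph{same} diagonal argument (same level $k$) applied to $(\nabla\zeta_j)$. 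No balancing of growth rates is needed at all.

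By contrast, your sketch has a real gap at the synchronisation. You bound the bad-set $\LL^p$ contribution by $r_j^p\lambda_j^p|E_j^{\lambda_j}|\le Cr_j^p\lambda_j^{p-2}$, using only the $\W^{1,2}$ weak-type estimate $|E_j^{\lambda_j}|\le C\lambda_j^{-2}$, and propose to kill it by letting $\lambda_j$ grow slowly. But the hypotheses do not force $r_j\to 0$: if $r_j\ge c>0$ along a subsequence (perfectly allowed), then for $p>2$ one has $r_j^p\lambda_j^{p-2}\to\infty$ whenever $\lambda_j\to\infty$, and the scheme collapses. The correct estimate uses the $\W^{1,p}$ maximal bound on $r_jv_j$ itself, which gives $(r_j\lambda_j)^p|E_j^{\lambda_j}|\le C\|r_j\nabla v_j\|_{\LL^p}^p\le C$ uniformly; one then runs a \emph{combined} pigeonhole over dyadic levels to make both $\lambda_j^2|E_j^{\lambda_j}|$ and $(r_j\lambda_j)^p|E_j^{\lambda_j}|$ small simultaneously. (Incidentally, the $\W^{1,2}$ equiintegrability is not automatic ``for any $\lambda_j\to\infty$'' either: the pigeonhole selection is essential already at that scale.)
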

In order to establish this result following Kristensen's proof, we state first the Helmholtz Decomposition Theorem and an auxiliary lemma. 
\begin{theorem}[Helmholtz Decomposition Theorem]
Let $1<p<\infty$ and denote
\begin{equation*}
\overset{\circ}{W}\,^{1,p}:=\left\{\varphi\in \W^{1,p}_{loc}(\R^n)\st \nabla\varphi\in \LL^p(\R^n,\R^n)\right\}
\end{equation*}
the homogeneous Sobolev space. 
Let
\begin{equation*}
E^p:=\left\{\nabla\varphi\st\varphi\in \overset{\circ}{W}\,^{1,p}\right\}
\end{equation*}
and
\begin{equation*}
B^p:=\left\{\sigma\in \LL^p(\R^n,\R^n)\st \mathrm{div}\sigma=0\mbox{ in the distributional sense}\right\}.
\end{equation*}
Then, $E^p$ and $B^p$ are closed subspaces of $\LL^p=\LL^p(\R^n,\R^n)$ such that $E^p\cap B^p=\{0\}$. Furthermore, there exist bounded projections $\mathbb{E}: \LL^p\rightarrow E^p$, $\mathbb{B}:\LL^p\rightarrow B^p$, so that $\mathrm{Id}_{\LL^p}=\mathbb{E}+\mathbb{B}$. In other words, for any $v\in \LL^p$, there exist $\varphi\in \overset{\circ}{W}\,^{1,p}$ and $\sigma\in \LL^p$ with $\mathrm{div}\sigma=0$, with the property that
\begin{equation}
v=\nabla\varphi+\sigma
\end{equation}
and where $\|\nabla\varphi\|_{\LL^p}\leq C_p\|v\|_{\LL^p}$, $\|\sigma\|_{\LL^p}\leq C_p\|v\|_{\LL^p}$ for some constant $C_p>0$. 

In addition, if $v\in \LL^2(\R^n,\R^n)\cap \LL^p(\R^n,\R^n)$, then the decomposition of $v$ in the space $\LL^2(\R^n,\R^n)$ coincides with the decomposition of $v$ in the space $\LL^p(\R^n,\R^n)$.
\end{theorem}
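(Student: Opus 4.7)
The plan is to construct the projections $\mathbb{E}$ and $\mathbb{B}$ explicitly via the Riesz transforms and then to deduce boundedness, complementarity, and $\LL^2/\LL^p$-consistency from the standard theory of singular integrals. For $v = (v^1,\ldots,v^n)\in\LL^p(\R^n,\R^n)$ I would set, componentwise,
\[
(\mathbb{E}v)_i := -\sum_{j=1}^n R_i R_j\, v^j, \qquad \mathbb{B}v := v - \mathbb{E}v,
\]
where $R_j$ denotes the $j$-th Riesz transform, with Fourier multiplier $-i\xi_j/|\xi|$. Since each $R_j$ is a Calderon--Zygmund operator bounded on $\LL^p(\R^n)$ for every $p\in(1,\infty)$, the compositions $R_iR_j$ are $\LL^p$-bounded, and hence so are $\mathbb{E}$ and $\mathbb{B}$, with operator norms depending only on $n$ and $p$.

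To identify $\mathbb{E}v$ as a gradient, note that the Fourier multiplier of $R_iR_j$ is $-\xi_i\xi_j/|\xi|^2$; introducing $\varphi$ as the tempered-distribution solution of $\Delta\varphi=\mathrm{div}\,v$, unique modulo a constant once we impose $\nabla\varphi\in\LL^p$, a direct Fourier-side computation yields $\mathbb{E}v=\nabla\varphi$, so $\varphi\in\overset{\circ}{W}\,^{1,p}$ and $\mathrm{div}(\mathbb{B}v)=\mathrm{div}\,v-\Delta\varphi=0$. Consequently $\mathbb{E}v\in E^p$ and $\mathbb{B}v\in B^p$, and the asserted estimates $\|\nabla\varphi\|_{\LL^p},\|\sigma\|_{\LL^p}\leq C_p\|v\|_{\LL^p}$ are exactly the continuity of the two projections. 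Closedness of $E^p$ and $B^p$ in $\LL^p$ then follows at once: $E^p$ is the range of the bounded projection $\mathbb{E}$, while the condition $\mathrm{div}\,\sigma=0$ is preserved under $\LL^p$-convergence in the sense of distributions.

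For the triviality of the intersection $E^p\cap B^p = \{0\}$: if $v=\nabla\varphi\in\LL^p$ satisfies $\mathrm{div}\,v=0$, then $\Delta\varphi=0$ distributionally, and by Weyl's lemma $\varphi$ is smooth and harmonic; since $\nabla\varphi\in\LL^p(\R^n)$, the classical Liouville theorem for harmonic functions forces $\nabla\varphi\equiv 0$. The $\LL^2/\LL^p$-consistency of the decomposition is immediate from the construction: for $v\in\LL^2\cap\LL^p$ the operators $R_iR_j$ are defined by the same Fourier multiplier in both settings and produce the same element of $\LL^2\cap\LL^p$, so the two decompositions coincide pointwise a.e.

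The principal obstacle is the $\LL^p$-boundedness of the Riesz transforms for $p\neq 2$, which relies on Calderon--Zygmund theory (weak-type $(1,1)$ estimates combined with Marcinkiewicz interpolation, or equivalently the $T(1)$ theorem). The $\LL^2$ case, by contrast, is immediate from Plancherel together with the pointwise bound $|\xi_i\xi_j|/|\xi|^2\leq 1$ on the symbol. Once these classical harmonic-analysis ingredients are granted, every remaining step reduces to straightforward symbolic calculus and standard distributional manipulations.
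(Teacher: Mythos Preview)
The paper does not prove the Helmholtz Decomposition Theorem: it is stated as a known auxiliary tool, with no argument supplied, and is simply invoked in the proof of the Decomposition Theorem (Theorem~\ref{DecLemma}). So there is nothing in the paper to compare your proposal against.

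That said, your sketch is the standard construction and is essentially correct. Defining $\mathbb{E}$ via the matrix of Riesz-transform products $R_iR_j$ (equivalently, the Fourier multiplier $\xi\otimes\xi/|\xi|^2$) and appealing to Calder\'on--Zygmund theory for $\LL^p$-boundedness is exactly how this result is usually obtained on $\R^n$. The idempotence $\mathbb{E}^2=\mathbb{E}$ is immediate from $(\xi\otimes\xi/|\xi|^2)^2=\xi\otimes\xi/|\xi|^2$, the identification of the range with $E^p$ follows from the curl-free structure of the symbol, and your Liouville argument for $E^p\cap B^p=\{0\}$ (each $\partial_i\varphi$ is harmonic and lies in $\LL^p(\R^n)$, hence vanishes) is clean. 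The $\LL^2$/$\LL^p$ consistency is, as you say, automatic because the operators are given by the same multiplier and agree on the dense subspace of Schwartz functions.

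One small point you could make more explicit: the passage from ``$\mathbb{E}v$ has vanishing distributional curl'' to ``$\mathbb{E}v=\nabla\varphi$ for some $\varphi\in\overset{\circ}{W}\,^{1,p}$'' deserves a sentence, since for general $p$ the potential $\varphi$ is not directly produced by inverting the Laplacian on $\LL^p$. The cleanest route is to observe that for Schwartz $v$ the formula $\varphi=\mathcal{F}^{-1}(-i|\xi|^{-2}\xi\cdot\hat v)$ works and gives $\nabla\varphi=\mathbb{E}v$; then for general $v\in\LL^p$ one approximates and uses that $E^p$ is closed (which you can prove directly: a limit in $\LL^p$ of gradients is curl-free, and a curl-free $\LL^p$ field on $\R^n$ is a gradient by mollification and the classical Poincar\'e lemma). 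This is routine, but worth spelling out.
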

The following result follows easily from the definition of $p$-equiintegrability. 
\begin{lemma}\label{lemmaequiint}
Let $\Omega\subseteq\R^n$ such that $\mathcal{L}^n(\Omega)<\infty$, $1\leq p< q\leq\infty$ and let $(f_j)\subseteq \LL^p(\Omega,\R^m)$ be bounded in $\LL^p$. Then, $(f_j)$ is $p-equiintegrable$ if and only if, for each $\varepsilon>0$, there exist a sequence $(g_j)\subseteq \LL^q(\Omega,\R^m)$ and a constant $c_\varepsilon>0$ such that, for all $j\in\N$, $\|f_j-g_j\|_{\LL^p}<\varepsilon$ and $\|g_j\|_{\LL^q}\leq c_\varepsilon$.
\end{lemma}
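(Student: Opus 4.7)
The plan is to establish the equivalence by proving each implication separately through elementary truncation and H\"older arguments.

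For the forward implication, suppose $(f_j)$ is $p$-equiintegrable and bounded in $\LL^p$. I would first note the equivalent tail characterization: since Chebyshev's inequality yields $\mathcal{L}^n(\{|f_j|>M\})\leq M^{-p}\sup_j\|f_j\|_{\LL^p}^p\to 0$ uniformly in $j$ as $M\to\infty$, the $p$-equiintegrability translates into
\begin{equation*}
\lim_{M\to\infty}\sup_{j\in\N}\underset{\{|f_j|>M\}}{\int}|f_j|^p\dx=0.
\end{equation*}
Given $\varepsilon>0$, I would choose $M=M(\varepsilon)$ large enough that the supremum above is strictly below $\varepsilon^p$, and set $g_j:=f_j\mathbf{1}_{\{|f_j|\leq M\}}$. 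Then $\|f_j-g_j\|_{\LL^p}<\varepsilon$ and $\|g_j\|_{\LL^\infty}\leq M$, so in particular $\|g_j\|_{\LL^q}\leq M\mathcal{L}^n(\Omega)^{1/q}=:c_\varepsilon$, with the convention $1/\infty=0$ when $q=\infty$.

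For the converse, assume the approximation property and fix $\varepsilon>0$. Let $\varepsilon'>0$ be chosen later; by hypothesis there exist $(g_j)\subseteq\LL^q(\Omega,\R^m)$ and $c_{\varepsilon'}>0$ with $\|f_j-g_j\|_{\LL^p}<\varepsilon'$ and $\|g_j\|_{\LL^q}\leq c_{\varepsilon'}$ for all $j\in\N$. For any Borel set $E\subseteq\Omega$, the elementary bound $|a+b|^p\leq 2^{p-1}(|a|^p+|b|^p)$ together with H\"older's inequality applied to $|g_j|^p$ and $\mathbf{1}_E$ with dual exponents $q/p$ and $q/(q-p)$ (or the obvious $\LL^\infty$-bound when $q=\infty$) yields
\begin{equation*}
\underset{E}{\int}|f_j|^p\dx\leq 2^{p-1}(\varepsilon')^p+2^{p-1}\mathcal{L}^n(E)^{1-p/q}c_{\varepsilon'}^p.
\end{equation*}
I would then pick $\varepsilon'$ with $2^{p-1}(\varepsilon')^p<\varepsilon/2$ and, with $\varepsilon'$ fixed, choose $\delta>0$ with $2^{p-1}\delta^{1-p/q}c_{\varepsilon'}^p<\varepsilon/2$, so that $\mathcal{L}^n(E)<\delta$ forces $\int_E|f_j|^p\dx<\varepsilon$ uniformly in $j$, which is exactly $p$-equiintegrability.

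No genuine obstacle is anticipated. The only minor points requiring care are the unified treatment of the case $q=\infty$ (where H\"older degenerates to $\int_E|g_j|^p\dx\leq\mathcal{L}^n(E)\|g_j\|_{\LL^\infty}^p$) and the observation that the exponent $1-p/q$ is strictly positive because $p<q$, which is precisely what makes the second term tend to zero as $\mathcal{L}^n(E)\to 0$.
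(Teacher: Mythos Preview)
Your argument is correct in both directions. The paper itself does not give a detailed proof of this lemma, stating only that it ``follows easily from the definition of $p$-equiintegrability''; your truncation-plus-H\"older argument is precisely the elementary route the paper has in mind.
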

We now proceed with the proof of the Decomposition Theorem following  \cite[Lemma 1.7]{KristensenLSC}.
\\
\begin{proof}[Proof of Theorem \ref{DecLemma}]
By considering suitable subsequences, that we do not relabel, we assume without loss of generality that ${\nabla u_j\overset{Y}{\rightarrow}(\nu_x)}$ and ${\nabla \zeta_j\overset{Y}{\rightarrow}(\mu_x)}$.

In addition, we observe that part (c') of the Theorem follows directly from part (c) and the fact that $r_j\in (0,1)$. 

Furthermore, we observe (as in \cite[Theorem 8.1]{GyM}) that (b') implies that the sequence $(s_k)$ is bounded in $\W^{1,p}$ and, since $\zeta_k=r_{j_k}u+s_k+t_k$, then we also have that $(t_k)$ is bounded in $\W^{1,p}$, following our initial assumption on $\zeta_k$. This, together with (a), means that there exist subsequences of $s_k$ and $t_k$, that we do not relabel,  such that they satisfy condition (a'). 

Given this, we are only left with establishing parts (a)-(d) and (b') of the theorem. 
\\
\\
\textit{Step 1.} Observe that, by working with the sequence $u_j-u$ instead of $u_j$, we can assume that $u=0$. 
\\
\\
\textit{Step 2.} \textit{Reduction of the problem to $(u_j)\subseteq \W^{1,2}_0(\Omega,\R^N)$.} We begin by taking a sequence of smooth subdomains $\Omega_k\Subset\Omega_{k+1}\Subset\Omega$ such that $\bigcup_{k\in\N}\Omega_k=\Omega$. In addition, we consider cut-off functions $\rho_k\colon\Omega\rightarrow[0,1]$ with $\rho_k\in C^1_c(\Omega)$, $\mathbbm{1}_{\Omega_k}\leq\rho_k\leq \mathbbm{1}_{\Omega}$ and $|\nabla\rho_k|\leq\frac{1}{d_k}$, where $d_k:=\mathrm{dist}(\Omega_k,\Omega)$.
\\
Now, observe that for any $j,k\in\N$, $u_j=\rho_k u_j+(1-\rho_k)u_j$. Hence, for $q\geq 1$, 
\begin{equation}\label{ectruncdecLemma}
\underset{\Omega}{\int}|\nabla((1-\rho_k)u_j)|^q\dx\leq c_q\underset{\Omega-\Omega_k}{\int}\left(|\nabla u_j|^q+\frac{|u_j|^q}{d_k^q}\right)\dx.
\end{equation}
Since $u_j\rightharpoonup 0$ in $\W^{1,2}(\Omega,\R^N)$, in particular we know that $u_j{\rightarrow}\,0$ in $\LL^2$. Hence, we can find a sequence $k_j\rightarrow\infty$ such that 
\begin{equation*}
\underset{\Omega-\Omega_{k_j}}{\int}\frac{|u_j|^2}{d_{k_j}^2}\dx\rightarrow 0
\end{equation*}
when $j\rightarrow\infty$. 
Furthermore, since $(\nabla u_j)$ is bounded in $\LL^2$, by  adjusting the sequence $k_j$ if necessary, from (\ref{ectruncdecLemma}) we can also ensure that $(1-\rho_{k_j})u_j$ is bounded in $\W^{1,2}$. In an analogous way we can also assume that $(1-\rho_{k_j})r_ju_j$ is bounded in $\W^{1,p}$. 

In addition, we also infer that $(1-\rho_{k_j})u_j\rightarrow 0$ in $\LL^2$. The last two facts imply together that $(1-\rho_{k_j})u_j\rightharpoonup 0$ in $\W^{1,2}(\Omega,\R^N)$.

What is more, using again that $|\nabla((1-\rho_{k_j})u_j)|\leq \left(|(1-\rho_{k_j})\nabla u_j|+\frac{|u_j|}{d_{k_j}} \right) \mathbbm{1}_{\Omega-\Omega_{k_j}} $, we infer that this sequence converges to $0$ in measure.  Given this, we focus on decomposing $(\rho_{k_j}u_j)$, as we can then incorporate $(1-\rho_{k_j})u_j$ into the sequence $(b_k)$. 
\\
\\
\textit{Step 3.} \textit{Truncation.} We now define, for each $k\in\N$, the truncation $T_k\colon\R^{N\times n}\rightarrow\R^{N\times n}$ at level $k$ by
\begin{equation*}
T_k(z):=\left\{
\begin{array}{ccc}
z& \mbox{if} &|z|\leq k\\
k\frac{z}{|z|} &\mbox{if}& |z|>k.
\end{array}
\right.
\end{equation*}
It is clear that $T_k$ is continuous and $|T_k(z)|\leq k$ for all $z\in\R^{N\times n}$. 
\\
Observe that, by the Fundamental Theorem for Young Measures and the Monotone Convergence Theorem, 
\begin{align*}
\underset{k\rightarrow\infty}{\lim}\,\underset{j\rightarrow\infty}{\lim}\underset{\Omega}{\int}|T_k(\nabla u_j)|^2\dx &=\underset{k\rightarrow\infty}{\lim} \underset{\Omega}{\int}\underset{\R^{N\times n}}{\int} |T_k(z)|^2\dv{}\nu_x(z)\dx\\
&=\underset{\Omega}{\int}\underset{\R^{N\times n}}{\int} |\cdot|^2\dv{}\nu_x\dx.
\end{align*}
We also have that
\begin{align*}
\underset{k\rightarrow\infty}{\lim}\,\underset{j\rightarrow\infty}{\lim}\underset{\Omega}{\int}|T_k(\nabla u_j)-\nabla u_j|\dx&\leq \underset{k\rightarrow\infty}{\lim}\,\underset{j\in\N}{\sup}\underset{|\nabla u_j|>k}{\int}2|\nabla u_j|\dx=0.
\end{align*}
Given these, we can now take a subsequence $j_k\rightarrow \infty$ such that
\begin{equation}\label{vkequiint}
\underset{k\rightarrow\infty}{\lim}\underset{\Omega}{\int}|T_k(\nabla u_{j_k})|^2\dx=\underset{\Omega}{\int}\langle|\cdot|^2,\nu_x\rangle\dx
\end{equation}
and 
\begin{equation}\label{vkgenerates}
\underset{k\rightarrow\infty}{\lim}\underset{\Omega}{\int}|T_k(\nabla u_{j_k})-\nabla u_{j_k}|\dx=0.
\end{equation}
Let $v_k:=T_k\circ\nabla u_{j_k}$. It follows from equation (\ref{vkequiint}) and the Fundamental Theorem for Young Measures, that $(v_k)$ is $2$-equiintegrable and, from (\ref{vkgenerates}), we deduce that $(v_k)$ also generates the Young Measure $(\nu_x)$. Furthermore, observe that using (\ref{vkequiint}), (\ref{vkgenerates}), Vitali's Convergence Theorem and de la Vall\'{e}e Poussin criterion for equiintegrability, we can also conclude that $(T_k\circ\nabla u_{j_k}-\nabla u_{j_k})\rightarrow 0$ strongly in $\LL^q(\Omega,\R^{N\times n})$ for $q<2$. 
\\
\\
\textit{Step 4.} \textit{Helmholtz decomposition.} Since $u_{j_k}\in \W^{1,2}_0(\Omega,\R^N)$, we can extend $u_{j_k}$ to $\R^{n}$ by $0$ while still having $u_{j_k}\in \overset{\circ}{W}\,^{1,2}(\R^n,\R^N)$. We also extend $v_k\colon\R^n\rightarrow\R^{N\times n}$ so that it is $0$ outside of $\Omega$.

Now, we apply row-wise Helmholtz decomposition in $\LL^2(\R^n,\R^n)$ and we obtain functions ${\tilde{g}_k\in \overset{\circ}{W}\,^{1,2}}$, $\tilde{\sigma}_k\in \LL^2(\R^n,\R^{N\times n})$ such that:
\begin{itemize}
\item[$\mathrm{(i)}$] $\mathbb{E}(v_k)=\nabla\tilde{g}_k$, $\mathbb{B}(v_k)=\tilde{\sigma}_k$ defined row-wise and, hence, $v_k=\nabla\tilde{g}_k+\tilde{\sigma}_k$;
\item[$\mathrm{(ii)}$] $\mathrm{div}\,\tilde{\sigma}_k=0$ and
\item[$\mathrm{(iii)}$] $\underset{\Omega}{\int}\tilde{g}_k=0$, which we can achieve by subtracting a constant from $\tilde{g}_k$ if necessary. 
\end{itemize}
We now claim that $\tilde{\sigma}_k\rightarrow 0$ in measure when $k\rightarrow\infty$. Indeed, since $v_k-\nabla u_{j_k}\rightarrow 0$ in $\LL^q(\Omega,\R^{N\times n})$ for every $q\in(1,2)$, we get that
\begin{equation}
\|\tilde{\sigma}_k\|_{\LL^q}=\|\mathbb{B}(v_k)\|_{\LL^q}=\|\mathbb{B}(v_k-\nabla u_{j_k})\|_{\LL^q}\leq c_q\|v_k-\nabla u_{j_k}\|_{\LL^q}\rightarrow 0, 
\end{equation}
where $c_q>0$ is a constant depending only on the continuity of $\mathbb{B}$. 

We now proceed to prove that $\nabla\tilde{g}_k$ is $2$-equiintegrable on $\Omega$. Observe first that, since $v_k$ is $2$-equiintegrable, Lemma \ref{lemmaequiint} implies that, for a given $\varepsilon>0$, we can find a sequence $(w_k)\subseteq \LL^{3}(\Omega,\R^{N\times n})$ such that, for all $k\in\N$, $\|v_k-w_k\|_{\LL^2}<\varepsilon$ and $\|w_k\|_{\LL^{3}}\leq c_\varepsilon$. On the other hand, by making $v_k=w_k=0$ off $\Omega$, we can also apply Helmholtz decomposition to $w_k$ to conclude that
\begin{equation*}
\|\nabla\tilde{g}_k-\mathbb{E}(w_k)\|_{\LL^2}\leq c\|v_k-w_k\|_{\LL^2}<c\varepsilon
\end{equation*}
and $\|\mathbb{E}(w_k)\|_{\LL^{3}}\leq c_\varepsilon$ for every $k\in\N$. Since $\varepsilon>0$ was arbitrary, by using Lemma \ref{lemmaequiint} once again, we conclude the proof of our claim. 

Observe that, since $\nabla\tilde{g}_k-\nabla u_{j_k}=v_k-\nabla u_{j_k}-\tilde{\sigma}_k=(v_k-\nabla u_{j_k})-\tilde{\sigma}_k$, we have that $\nabla\tilde{g}_k-\nabla u_{j_k}\rightarrow 0$ in measure. In addition, (\ref{vkequiint}) implies that $\nabla\tilde{g}_k$ is bounded in $\LL^2(\Omega,\R^{N\times n})$. Hence,  we can assume that $\nabla\tilde{g}_k-\nabla u_{j_k}\rightharpoonup 0$ in $\LL^2(\Omega,\R^{N\times n})$ and, therefore, the same holds for $(\nabla\tilde{g}_k)$.

On the other hand, because $\underset{\Omega}{\int}\tilde{g}_k=0$, by Poincar\'e's inequality and Rellich-Kondrachov Embedding Theorem we can conclude that there is $g\in \W^{1,2}(\Omega,\R^N)$ such that $\tilde{g}_k\rightharpoonup g$ in $\W^{1,2}(\Omega,\R^N)$ and, by the observations above, we further have $\tilde{g}_k\rightharpoonup 0$ in $\W^{1,2}(\Omega,\R^N)$. 

Now, we consider again the sequence of domains $(\Omega_l)$. Observe that, since $\tilde{g}_k\rightarrow 0$ in $\LL^2(\Omega,\R^N)$, we can find a subsequence $\Omega_{l_k}$ such that, if $g_k:=\rho_{l_k}\tilde{g}_k$, then 
\begin{equation*}
\nabla g_k=\rho_{l_k}\nabla\tilde{g}_k+\tilde{g}_k\otimes\nabla\rho_{l_k}
\end{equation*}
is $2$-equiintegrable.
In addition, since $\nabla\rho_{l_k}=0$ in $\Omega_{l_k}$, it follows that, if $b_k:=u_{j_k}-g_k$, then 
\begin{align*}
\nabla b_k&=\nabla u_{j_k}-\nabla g_k=(\nabla u_{j_k}-v_k)+(v_k-\rho_{l_k}v_k)+\rho_{l_k}(v_k-\nabla\tilde{g}_k)-\tilde{g}_k\otimes\nabla\rho_{l_k}\\
&=(\nabla u_{j_k}-v_k)+(1-\rho_{l_k})v_k+\rho_{l_k}\tilde{\sigma}_k-\tilde{g}_k\otimes\nabla\rho_{l_k}
\end{align*} 
converges to $0$ in measure. 
Then, $g_k$ and $b_k$ are the desired functions and, since  $g_k\in \W^{1,2}_0(\Omega,\R^N)$, we can further assume that $(g_k)\in C^\infty_c(\Omega,\R^N)$, with which we complete the proof of parts (a)-(d) of the Theorem. 

We now proceed with the proof of part (b'). Arguing exactly as we did in (\ref{vkequiint}) from Step 3 and performing, once again, a slight abuse on the notation by not relabelling the corresponding subsequence, we can assume that 
\begin{equation}\label{forrkvkequiint}
\underset{k\rightarrow\infty}{\lim}\underset{\Omega}{\int}|T_k(\nabla \zeta_{j_k})|^p\dx=\underset{\Omega}{\int}\langle|\cdot|^p,\mu_x\rangle\dx.
\end{equation}
In addition, we observe that, for every $x\in\Omega$ and with $v_k=T_k\circ\nabla u_{j_k}$ as in Step 3,  it holds that
\begin{equation}\label{comprkvkwithTk}
\left|r_{j_k}v_k(x)\right|=\left|r_{j_k}T_k\circ \nabla u_{j_k}(x)\right|=\left|T_{r_{j_k}k}(r_{j_k}\nabla u_{j_k}(x))\right|\leq \left|T_k(r_{j_k}\nabla u_{j_k})\right|=\left|T_k(\nabla\zeta_{j_k})\right|.
\end{equation}
We are using here the elementary identity $rT_k(\xi)=T_{rk}(r\xi)$ and the facts that $r_{j_k}k\leq k$ and  $k\mapsto |T_k(\xi)|$ is non-decreasing for every $\xi\in\R^{N\times n}$. 

It follows from (\ref{forrkvkequiint}) and (\ref{comprkvkwithTk}) that the sequence $(r_{j_k}v_k)$ is $p$-equiintegrable and, in particular, it is also bounded in $\LL^p$. 

Furthermore, if $\mathbb{E}(v_k)=\nabla \tilde{g}_k$, then clearly $\mathbb{E}(r_{j_k}v_k)=r_{j_k}\nabla \tilde{g}_k$  by decomposing both $v_k$ and $r_{j_k}v_k$ in $\LL^2(\Omega,\R^N)$. In addition, since $r_{j_k}v_k\in \LL^2(\Omega,\R^N)\cap\LL^p(\Omega,\R^N)$, Helmholtz Decomposition Theorem enables us to ensure that, for some constant $c_p>0$, 
\begin{equation*}
\|r_{j_k}\nabla \tilde{g}_k\|_{\LL^p}=\|\mathbb{E}(r_{j_k}v_k)\|_{\LL^p}\leq c_p\|r_{j_k}v_k\|_{\LL^p}.
\end{equation*}
This inequality, together with the fact that $(r_{j_k}v_k)$ is bounded in $\LL^p(\Omega,\R^N)$, enables us to conclude that, for a subsequence that we do not relabel, $r_{j_k}\nabla \tilde{g}_k$ converges weakly in $\W^{1,p}(\Omega,\R^N)$ and, arguing exactly as we did with $\tilde{g}_k$, we can further deduce that 
\begin{equation}\label{rkgtildekconv}
r_{j_k}\tilde{g}_k\rightharpoonup 0 \mathrm{\hspace{7mm}in} \hspace{7mm}\W^{1,p}(\Omega,\R^N).
\end{equation}

To conclude the proof of (b') it is enough to observe that 
\begin{equation*}
\nabla s_k=r_{j_k}\nabla g_{j_k}=r_{j_k}\rho_{j_k}\nabla\tilde{g}_{j_k}+r_{j_k}\tilde{g}_{j_k}\otimes\nabla\rho_{j_k}
\end{equation*}
and use the fact that $(T_k(\nabla\zeta_{j_k}))$ is $p$-equiintegrable, together with (\ref{comprkvkwithTk}) and (\ref{rkgtildekconv}), to proceed as we did to show that $\nabla g_k$ is $2$-equiintegrable and construct that way a subsequence of $(s_k)$ such that $\nabla s_k$ is $p$-equiintegrable. 
\end{proof}

\subsection{Spatially-local minimizers: Zhang's Theorem}\label{SectionZhang}
In this section we establish a generalization of a theorem by K. Zhang in \cite{Zhang}. The original result states that smooth extremals are all spatially-local minimizers in a strict sense under Dirichlet boundary conditions. The generalization that we present here allows part of the boundary to take free values. We also remove the assumption on uniform continuity of the second derivative of the integrand. However, the essence of the proof remains the same as the the one in \cite{Zhang}. We state the result in this more general way aiming at using it for the new proof of the sufficiency result. 

Furthermore, we remark that the main idea behind Zhang's Theorem is that, if an extremal is smooth, in small subsets of its domain it is close enough to an affine function (in an uniform way). Therefore, we can exploit the strong quasiconvexity assumption on the integrand, according to which affine functions minimize the integrand under the corresponding affine boundary conditions, to obtain minimality in a local sense in space. 
\begin{theorem}\label{teo1Zhang}
Let $F\colon\R^{N\times n}\rightarrow \R$ satisfy $\mathrm{(H0)-(H2)}$ and $\mathrm{(H2')}$ for some $1<p<\infty$ and assume that $\Omega\subseteq \R^n$ is a $C^1$ bounded domain. If $u\in C^1(\overline{\Omega},\R^N)$ is an $F-extremal$,
then there exists $\overline{R}>0$ such that, for every $x_0\in\overline{\Omega}$,
\begin{equation}\label{ecteo1}
\frac{c_2}{2}\underset{\Omega(x_0,\overline{R})}{\int}\left|V(\nabla\varphi)\right|^2\dx\leq \underset{\Omega(x_0,\overline{R})}{\int}\left(F(\nabla u+\nabla\varphi)-F(\nabla u)\right)\dx
\end{equation}
whenever $\varphi\in \mathrm{Var}(\Omega(x_0,\overline{R}),\R^N)$.
%
%
%
\end{theorem}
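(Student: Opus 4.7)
The idea is to exploit the $C^1$ regularity of $u$ to linearise on small neighbourhoods: on $\Omega(x_0,\overline{R})$ with $\overline{R}$ small, $\nabla u$ is uniformly close to the constant $z_0:=\nabla u(x_0)$, so the quasiconvexity hypotheses $\mathrm{(H2)}/\mathrm{(H2')}$ applied at $z_0$ supply the required lower bound up to a controllable error. Since $\overline{\Omega}$ is compact and $\nabla u$ is continuous, $\nabla u$ admits a uniform modulus of continuity $\omega$ with $|\nabla u(x)-z_0|\le\omega(\overline{R})$ for every $x\in\Omega(x_0,\overline{R})$ and every $x_0\in\overline{\Omega}$.

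First I would invoke the Euler--Lagrange equation: given $\varphi\in\mathrm{Var}(\Omega(x_0,\overline{R}),\R^N)$, its extension by zero lies in $\mathrm{Var}(\Omega,\R^N)$, so by Definition~\ref{defextremalMixedBdry} we have $\int_{\Omega(x_0,\overline{R})}\langle F'(\nabla u),\nabla\varphi\rangle\dx=0$. Consequently,
\begin{equation*}
I:=\underset{\Omega(x_0,\overline{R})}{\int}\bigl(F(\nabla u+\nabla\varphi)-F(\nabla u)\bigr)\dx=\underset{\Omega(x_0,\overline{R})}{\int}\bigl(F(\nabla u+\nabla\varphi)-F(\nabla u)-\langle F'(\nabla u),\nabla\varphi\rangle\bigr)\dx.
\end{equation*}
I would then split $I=J+E$, where
\begin{equation*}
J:=\underset{\Omega(x_0,\overline{R})}{\int}\bigl(F(z_0+\nabla\varphi)-F(z_0)-\langle F'(z_0),\nabla\varphi\rangle\bigr)\dx
\end{equation*}
is the constant-coefficient analogue and $E$ collects the remainder. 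The linear term $\langle F'(z_0),\nabla\varphi\rangle$ integrates, via the divergence theorem, to a boundary integral supported on $\Gamma_N\cap B(x_0,\overline{R})$, since $\varphi$ vanishes on the remainder of $\partial\Omega(x_0,\overline{R})$ by definition of $\mathrm{Var}(\Omega(x_0,\overline{R}),\R^N)$.

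To control $J$ I would apply quasiconvexity: when $\Omega(x_0,\overline{R})\cap\Gamma_N=\emptyset$, the boundary term vanishes and $\mathrm{(H2)}$---extended from the unit cube to arbitrary bounded Lipschitz subdomains by the standard covering/rescaling argument---yields $J\ge c_2\int_{\Omega(x_0,\overline{R})}|V(\nabla\varphi)|^2\dx$. For $x_0\in\Gamma_N$ I would first flatten $\partial\Omega$ near $x_0$ via a $C^1$ diffeomorphism, so that $\Omega(x_0,\overline{R})$ is mapped to a small perturbation of the half-ball $B^-_{\mathrm{n}(x_0)}$; then $\mathrm{(H2')}$ applies on the image, and pulling back introduces distortion factors that tend to $1$ as $\overline{R}\to 0$.

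It remains to absorb $E$. Taylor's theorem with integral remainder, applied to both $F(\nabla u+\nabla\varphi)-F(\nabla u)-\langle F'(\nabla u),\nabla\varphi\rangle$ and its counterpart at $z_0$, gives the representation
\begin{equation*}
E=\underset{\Omega(x_0,\overline{R})}{\int}\int_0^1(1-t)\bigl(F''(\nabla u+t\nabla\varphi)-F''(z_0+t\nabla\varphi)\bigr)[\nabla\varphi,\nabla\varphi]\dt\dx,
\end{equation*}
so that the integrand vanishes wherever $\nabla u=z_0$. Combining the $(p-2)$-growth of $F''$ inherited from $\mathrm{(H0)}$--$\mathrm{(H1)}$ with the uniform continuity of $F''$ on compact sets (which holds for $|\nabla u+t\nabla\varphi|,|z_0+t\nabla\varphi|$ bounded) and absorbing via Lemma~\ref{lemmapropsV}(viii)--(ix), I expect to obtain $|E|\le\sigma(\overline{R})\int_{\Omega(x_0,\overline{R})}|V(\nabla\varphi)|^2\dx$ for some modulus $\sigma$ with $\sigma(0^+)=0$. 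Choosing $\overline{R}$ so small that $\sigma(\overline{R})<c_2/2$---uniformly in $x_0\in\overline{\Omega}$, which is possible by compactness---then closes the argument. I expect the last estimate to be the main technical obstacle: for $p>2$ one must avoid introducing additive constants, which forces a case split according to whether $|\nabla\varphi|$ lies above or below a threshold, with uniform continuity of $F''$ handling the small regime and the $(p-2)$-growth together with property~(vii) of Lemma~\ref{lemmapropsV} handling the large one. A parallel subtlety is that in the Neumann case the flattening of $\partial\Omega$ must not degrade the $V$-norm estimate, which is why the $C^1$ assumption on $\partial\Omega$ is essential.
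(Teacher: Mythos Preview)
Your overall strategy---freeze coefficients at $z_0=\nabla u(x_0)$, invoke $\mathrm{(H2)}$ or $\mathrm{(H2')}$ (after flattening) for the frozen functional $J$, and absorb the remainder $E$---is exactly the paper's. The paper packages the freezing and the boundary flattening into a single estimate (Lemma~\ref{lemmaestimagesG}(b),(c) compares $G(x_0,zJ)$ with $G(x,z)$, where $J$ is the Jacobian of the flattening diffeomorphism), whereas you separate them; both organizations work, and the paper's use of the Euler--Lagrange equation of $\mathrm{(H2')}$ to kill the linear term $\langle F'(z_0),\nabla\tilde\varphi\rangle$ on the half-ball is precisely the mechanism you allude to.

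There is, however, a genuine gap in your control of $E$ in the large-gradient regime. Invoking ``$(p-2)$-growth of $F''$'' does not close the estimate: first, such growth is \emph{not} a consequence of $\mathrm{(H0)}$--$\mathrm{(H1)}$ (quasiconvexity gives $(p-1)$-growth of $F'$, but no uniform bound on the full Hessian); second, even granting $|F''(\xi)|\le C(1+|\xi|^{p-2})$, bounding the \emph{difference} $|F''(\nabla u+t\nabla\varphi)-F''(z_0+t\nabla\varphi)|$ by the sum of the two Hessians yields only $|E|\le C\int|V(\nabla\varphi)|^2$ with no small factor $\sigma(\overline R)$---what you would need is a modulus of continuity for $F''$ at infinity, which the hypotheses do not provide. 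The paper avoids this by \emph{abandoning} the Hessian representation when $|z|>1$: it estimates $G(x_0,z)-G(x,z)$ directly from the Lipschitz bound $|F(\xi)-F(\eta)|\le c(1+|\xi|^{p-1}+|\eta|^{p-1})|\xi-\eta|$ (valid for rank-one convex integrands with $p$-growth) together with the local Lipschitz continuity of $F'$ on the bounded range of $\nabla u$. This produces the required factor $\mathrm{osc}_{\Omega(x_0,\overline R)}\nabla u$ in front of $|V(z)|^2$, which is what makes $\sigma(\overline R)\to 0$.
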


\begin{remark}\label{Zhangwithcubes}
Let $\Omega_Q(x_0,r):=\Omega\cap Q(x_0,r)$, where $Q(x_0,r)$ is a cube with sides parallel to the coordinate axes. It is then easy to see that, if $\varphi\in\mathrm{Var}\left(\Omega_Q\left(x_0,\frac{\overline{R}}{2}\right)\right)$, then by assigning $\varphi$ the value of $0$ in $\Omega(x_0,\overline{R})\backslash\Omega_Q(x_0,\frac{\overline{R}}{2})$, we can assume that $\varphi\in \mathrm{Var}(\Omega(x_0,\overline{R}))$. Therefore, Theorem \ref{teo1Zhang} remains valid if we exchange $\Omega(x_0,\overline{R})$ by $\Omega_Q(x_0,\overline{R})$ in the statement.
%
\end{remark}

The proof of Theorem \ref{teo1Zhang} relies on the following growth and continuity estimates, for which the regularity assumed on $u$ plays a central role. 
\begin{lemma}\label{lemmaestimagesG}
Let $F\colon\R^{N\times n}\rightarrow \R$ satisfy  $\mathrm{(H0)-(H1)}$ as well as the Legendre-Hadamard condition.\footnote{$F\in C^2(\R^{N\times n})$ is said to satisfy the Legendre-Hadamard condition if for every  $\xi=(\xi_\alpha^i)\in\R^{N\times n}$, $\lambda\in\R^N$ and $\mu\in\R^n$, $\sum_{1=i,j}^N\sum_{1=\alpha,\beta}^  n \frac{\partial^2F(\xi)}{\partial\xi_\alpha^i\partial\xi_\beta^j}\lambda^i\lambda^j\mu_\alpha\mu_\beta\geq 0$. See also \cite[Theorem 5.3]{dacor}.} Let $u\in C^1(\overline{\Omega},\R^N)$ be an $F$-extremal and define the functional ${G\colon\overline{\Omega}\times\R^{N\times n}\rightarrow\R}$ by 
\begin{eqnarray*}
G(x,z)&:=&F(\nabla u(x)+z)-F(\nabla u(x))-\left\langle F'(\nabla u(x)),z\right\rangle\\
&=&\int_0^1(1-t)F''(\nabla u(x)+tz)[z,z]\dt.
\end{eqnarray*} 
Then, the following estimates remain true:
\begin{itemize}
\item[(a)] $|G(x,z)|\leq C_0|V(z)|^2$.
\item [(b)] For every $\varepsilon>0$  there is an $R=R_\varepsilon>0$ such that, for all $x\in\Omega(x_0,R)$, $J\in\R^{n\times n}$ and $z\in\R^{N\times n}$, if $w=z J$ and $|J-\mathrm{I}_n|<R$, then 
\begin{equation}\label{quaderror}
E:=|G(x_0,w)-G(x,z)|<\frac{\varepsilon}{4} |V(z)|^2.
\end{equation} 
\item[(c)] For any $\varepsilon>0$ there is a $\delta=\delta_\varepsilon\in (0,\min\{\frac{\varepsilon}{2},1\})$ such that, if $J\in\R^{n\times n}$ satisfies $|J-\mathrm{I}_n|<\delta$, then 
\begin{equation*}
c_2\left||V(z J)|^2-|V(z)|^2|\det J|\right|\leq  \,\frac{\varepsilon}{4} |V(z)|^2.
\end{equation*}
\end{itemize}
\begin{proof}
We first prove part (b) of the Lemma using the truncation strategy originated in \cite[Lemma II.3]{AcFus}. We observe that part (a) can be shown following the same ideas and it is in fact less technical, so we omit the proof. 

Observe first that, since $u\in C^1(\overline{\Omega},\R)$, $\nabla u$ is uniformly continuous and bounded in $\overline{\Omega}$. 

We establish (\ref{quaderror}) by considering the following two cases. 
\\
\textit{Case 1.} If $|z|\leq 1$ and $w=zJ$ with $|J-\mathrm{I}_n|<1$ then, by the local uniform continuity of $F''$, we can find a modulus of continuity, say $\omega\colon[0,\infty)\rightarrow [0,1]$, such that it is increasing, continuous, $\omega (0)=0$ and for which there is a constant $c>0$ with the property that
\begin{equation*}
|F''(\nabla u(x)+tz)-F''(\nabla u(x_0)+tw)|\leq c\,\omega(|x-x_0|+|z-w|)
\end{equation*}
for all $x,x_0\in \overline{\Omega}$, $t\in [0,1]$, $|z|\leq 1$ and $w=z J$ with $|J-\mathrm{I}_n|<1$.
We can further assume that $c$ is such that 
\begin{equation}\label{condoverc}
1+n+|F''(\nabla u(x)+tz)|\leq c
\end{equation} 
for all $x\in \overline{\Omega}$, $t\in [0,1]$ and $|z|\leq 1$. 
\\Using this,  we fix $0<\varepsilon<1$ and observe that
\begin{align*}
&|G(x_0,w)-G(x,z)|\leq  \,|G(x_0,w)-G(x_0,z)|+|G(x_0,z)-G(x,z)|\\
\leq & \,c\,\omega(|w-z|)|w|^2+\int_0^1|F''(\nabla u(x_0)+tz)||w-z|(|w|+|z|)\dt+c\,\omega(|x-x_0|)|z|^2\\
\leq & \,c\,\omega(|J-\mathrm{I}_n|)|J|^2|V(z)|^2+c\,|J-\mathrm{I}_n|(|J||V(z)|^2+|V(z)|^2)+c\,\omega(|x-x_0|)|V(z)|^2\\
\leq & \,\frac{\varepsilon}{4} |V(z)|^2, 
\end{align*}
where the last inequality is making use of (\ref{condoverc}) and it holds provided that $|J-\mathrm{I}_n|$, $\omega(|J-\mathrm{I}_n|)$ and $\omega(|x-x_0|)$ are small enough. Notice that, if that is the case, we can assume $|J|<c(n)$ for a constant $c(n)>0$.

Thus, for $|z|\leq 1$ we have that, if $R\in (0,1)$ is such that $c\,\omega(R)|J|^2<\frac{\varepsilon}{8}$, then for every  $x\in \Omega(x_0,R)$, $E\leq \frac{\varepsilon}{4} |V(z)|^2$, provided also that $|J-\mathrm{I}_n|<R$. 
\\
\textit{Case 2.} For $|z|>1$, we will need to make use of the Lipschitz bounds for $F$ that are derived from $\mathrm{(H0)-(H1)}$ and the Legendre-Hadamard condition (see \cite[Proposition 2.32]{dacor}). Following this, and the fact that $F'$ is also locally uniformly continuous, we have for $x\in\Omega(x_0,R)$ that 
\begin{align*}
&|G(x_0,w)-G(x,z)|\leq  \,|G(x_0,w)-G(x_0,z)|+|G(x_0,z)-G(x,z)|\\
\leq & |F(\nabla u (x_0)+w)-F(\nabla u(x_0)+z)|+|F'(\nabla u(x_0))||w-z|+|F(\nabla u(x_0)+z)-F(\nabla u(x)+z)|\\
&\,+|F(\nabla u(x_0))-F(\nabla u(x))|+|F'(\nabla u(x_0))-F'(\nabla u (x))||z|\\
\leq & \,c\, (1+|z|^p)|J-\mathrm{I}_n| + C (1+|z|+|z|^p)|\nabla u(x)-\nabla u(x_0)|\\
\leq & \,C\,(|J-\mathrm{I}_n|+(\text{osc}_{\Omega(x_0,R)}\nabla u))|V(z)|^2,
\end{align*}
where the second inequality follows after using the local Lipschitz continuity satisfied by $F$ and that $a^{p-1}b\leq \frac{a^p}{p}+\frac{b^p}{p}$ for $a,b>0$. The last inequality is a consequence from the fact that $|z|>1$.

Therefore, if for a given $\varepsilon>0$ we take $R>0$ such that
\begin{center}
$\begin{array}{lcr}
C \,(|J-\mathrm{I}_n|+(\text{osc}_{\Omega(x_0,R)}\nabla u))<\frac{\varepsilon}{4},\\
\end{array}$
\end{center}
our claim follows by choosing $R=R_\varepsilon>0$ suitable to make $E\leq \frac{\varepsilon}{4}|V(z)|^2$ for any $z\in\R^{N\times n}$. 
\\

For the proof of (c) observe that, by continuity of the determinant, 
for any given $C,\varepsilon>0$ there is a $\delta=\delta_\varepsilon\in (0,\min\{\frac{\varepsilon}{2},1\})$ such that, if $|J-\mathrm{I}_n|<\delta$ with $J\in\R^{n\times n}$, we can then ensure $|J-\mathrm{I}_n|+|1-|\det(J)||<\frac{\varepsilon}{4C}$. This technical observation enables us to estimate, for any $z\in\R^{N\times n}$ and any $J\in\R^{n\times n}$ with $|J-\mathrm{I}_n|<\delta$ as above, that
\begin{align}
&c_2\left||V(z J)|^2-|V(z)|^2|\det J|\right|\notag\\
\leq &\,C\left(|V(z J)|+|V(z)|\right)\left||V(z J|-|V(z)|\right|+C|V(z)|^2|1-|\det J||\notag\\
\leq &\,C|V(z)||z||J-\mathrm{I}_n|\left(1+|z J|^2+|z|^2\right)^{\frac{p-2}{4}}+C|V(z)|^2|1-|\det J||\label{prelimauxineq}\\
\leq &\, C\left(|J-\mathrm{I}_n||V(z)|^2+|V(z)|^2|1-|\det J||\right)\notag\\
\leq & \,\frac{\varepsilon}{4} |V(z)|^2.
\end{align}
We remark that inequality (\ref{prelimauxineq}) follows after applying Lemma \ref{lemmapropsV} (i)-(v), together with the fact that we can assume $|J|\leq C$, given that $|J-\mathrm{I}_n|<\delta<1$. 
\end{proof}

\end{lemma}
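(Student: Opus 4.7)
The plan is to prove all three estimates by exploiting the fact that $u\in C^1(\overline{\Omega},\R^N)$ makes $\nabla u$ uniformly continuous and bounded on $\overline{\Omega}$, and by splitting each argument into the two regimes $|z|\le 1$ and $|z|>1$. In the first regime all relevant matrices stay in a fixed compact subset of $\R^{N\times n}$, so $F''$ is uniformly continuous and bounded there by $\mathrm{(H0)}$; in the second regime the $C^2$-bounds are replaced by the Lipschitz estimates for $F$ and $F'$ on bounded sets that follow from $\mathrm{(H0)-(H1)}$ together with the Legendre--Hadamard condition (\cite[Proposition 2.32]{dacor}). The link between $|z|^2+|z|^p$ and $|V(z)|^2$ supplied by Lemma \ref{lemmapropsV}(ix) is what turns both types of bound into the desired $|V(z)|^2$-estimates.

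For part (a), in the regime $|z|\le 1$ the integral representation $G(x,z)=\int_0^1(1-t)F''(\nabla u(x)+tz)[z,z]\dt$ together with a uniform bound on $F''$ yields $|G(x,z)|\le C|z|^2\le C|V(z)|^2$; for $|z|>1$, $\mathrm{(H1)}$ and the linear-growth bound on $F'$ give $|G(x,z)|\le C(1+|z|^p)\le C'|z|^p\le C''|V(z)|^2$, again by Lemma \ref{lemmapropsV}(ix). Part (c) is algebraic: writing
\begin{equation*}
|V(zJ)|^2-|V(z)|^2|\det J|=\bigl(|V(zJ)|+|V(z)|\bigr)\bigl(|V(zJ)|-|V(z)|\bigr)+|V(z)|^2\bigl(1-|\det J|\bigr),
\end{equation*}
I would estimate the first product with Lemma \ref{lemmapropsV}(v), which yields a factor $C|z|\,|J-\mathrm{I}_n|(1+|zJ|^2+|z|^2)^{(p-2)/4}$ that — once combined with Lemma \ref{lemmapropsV}(ii)--(ix) and the fact that $|J-\mathrm{I}_n|<\delta<1$ keeps $|J|$ bounded — repackages into $C|J-\mathrm{I}_n||V(z)|^2$; the second summand is handled by continuity of the determinant. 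Choosing $\delta$ small enough to force $|J-\mathrm{I}_n|+|1-|\det J||<\varepsilon/(4c_2C)$ then delivers the claimed inequality.

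Part (b), which I expect to be the main obstacle, I would handle by splitting
\begin{equation*}
G(x_0,w)-G(x,z)=\bigl(G(x_0,w)-G(x_0,z)\bigr)+\bigl(G(x_0,z)-G(x,z)\bigr),
\end{equation*}
the first term isolating the change of matrix argument from $z$ to $w=zJ$ at fixed base point, and the second the change of base point from $x_0$ to $x$ at fixed $z$. For $|z|\le 1$ and $|J-\mathrm{I}_n|<1$, a modulus of continuity $\omega$ of $F''$ on the compact set $\nabla u(\overline{\Omega})+B[0,c(n)]$ controls the integrands in the representation formulas for $G(x_0,w)$, $G(x_0,z)$ and $G(x,z)$, giving a bound by $C\bigl(\omega(|J-\mathrm{I}_n|)|J|^2+|J-\mathrm{I}_n|(|J|+1)+\omega(|x-x_0|)\bigr)|V(z)|^2$, which is below $\tfrac{\varepsilon}{4}|V(z)|^2$ once $R$ is small. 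For $|z|>1$, the local Lipschitz bounds on $F$ and $F'$ produce contributions of the form $C(1+|z|^p)|J-\mathrm{I}_n|$ and $C(1+|z|+|z|^p)|\nabla u(x)-\nabla u(x_0)|$, and Young's inequality $a^{p-1}b\le\tfrac{a^p+b^p}{p}$ handles the mixed-degree crossed terms. Using $|z|>1$ to absorb $1+|z|^p$ into $C|V(z)|^2$ and shrinking $R$ so that $|J-\mathrm{I}_n|<R$ and $\mathrm{osc}_{\Omega(x_0,R)}\nabla u$ are both small finishes this case; the delicate point throughout is verifying that every mixed term of the form $|z|^j|J-\mathrm{I}_n|^k$ or $|z|^j\omega(|x-x_0|)$ assembles cleanly into an $|V(z)|^2$-coefficient that can be made arbitrarily small by choosing $R$ small.
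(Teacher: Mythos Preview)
Your proposal is correct and follows essentially the same approach as the paper: the same triangle-inequality splitting $G(x_0,w)-G(x,z)=(G(x_0,w)-G(x_0,z))+(G(x_0,z)-G(x,z))$, the same case distinction $|z|\le 1$ versus $|z|>1$ handled respectively via a modulus of continuity for $F''$ and via the Lipschitz bounds from \cite[Proposition 2.32]{dacor}, and for (c) the same algebraic decomposition combined with Lemma \ref{lemmapropsV}(v) and continuity of the determinant. The paper's write-up is slightly more explicit about the intermediate inequalities, but the architecture of the argument is identical to yours.
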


\begin{proof}[Proof of Theorem \ref{teo1Zhang}]
The main idea behind the proof will be an appropriate use of the quasiconvexity conditions  (H2) and (H2').

We first use Lemma \ref{lemmaestimagesG} to conclude that, for every $\varepsilon>0$ there are $0<\delta=\delta_\varepsilon<1$ and $\rho=\rho_\varepsilon>0$ such that, for every $J\in\R^{n\times n}$, $z\in\R^{N\times n}$ and  $x_0,x\in\R^n$, if $|J-\mathrm{I}_n|<\delta$ and $x\in\Omega(x_0,\rho)$, then 
\begin{align}
&|G(x_0,z\cdot J)-c_2|V(z\cdot J)|^2-(G(x,z)|\det J|-c_2|V(z)|^2|\det J|)|\notag\\
\leq&|G(x_0,z\cdot J)-G(x,z)|+|G(x,z)||1-|\det J||+c_2||V(z\cdot J)|^2-|V(z)|^2|\det J||\notag\\
\leq& \frac{\varepsilon}{2}|V(z)|^2\notag\\
\leq& \varepsilon |V(z)|^2|\det J|,\label{mainestimate}
\end{align}
where the last inequality follows from the local uniform continuity of the determinant and from the assumption that $|J-\mathrm{I}_n|<\delta$ for $0<\delta<\frac{\varepsilon}{2}$. 

Having obtained this preliminary estimate, we first show the theorem for the case in which $x_0\in \Gamma_N$. 

Then, because $\Omega$ is a set of class $C^1$, we can find an $R_0>0$, which does not depend on $x_0$, such that for every $0<r<R_0$ there is a diffeomorphism $\Phi_r\colon B_{\mathrm{n}(x_0)}^-(0,1)\rightarrow \left(\frac{\Omega-x_0}{r}\cap B(0,1)\right)$. Although $\Phi_r$ depends on $x_0$, the estimates we obtain from it are uniform on $x_0\in\Omega$: because $\partial\Omega$ is smooth and compact, we can construct the diffeomorphisms $\Phi_r$ so that, given a $\delta>0$, we can find an $R_1\in(0,R_0)$ such that for every $0<r\leq R_1$ and for every $x_0\in\partial\Omega$, 
\begin{equation}\label{eqhomeom}
\left\|\Phi_r-Id_{B_{\mathrm{n}(x_0)}^-(0,1)}\right\|_{\LL^\infty(B_{\mathrm{n}(x_0)}^-(0,1),\R^n)}+\left\|\nabla\Phi_r-\mathrm{I}_n\right\|_{\LL^\infty(B_{\mathrm{n}(x_0)}^-(0,1),\R^{n\times n})}<\delta,
\end{equation}
so that $\nabla\Phi_r$ converges to the identity matrix uniformly on $B_{\mathrm{n}(x_0)}^-(0,1)$ and uniformly for $x_0\in\partial\Omega$.\footnote{See Theorem C.1 in \cite{GyM} for a careful construction of the diffeomorphisms $\Phi_r$.}
\\
Having established the above estimates, after fixing $\varepsilon>0$ we obtain $\delta>0$ and $\rho>0$ such that (\ref{mainestimate}) is satisfied and, for such $\delta>0$, we take $R_1$ so that ($\ref{eqhomeom}$) holds. We further assume that $\left|\left|\Phi_{R_1}\right|\right|_{\LL^\infty(B_{\mathrm{n}(x_0)}^-(0,1))}\leq 2$. We now let $R:=\frac{1}{2}\min\{\rho,R_1\}$ and observe that, for any  $y\in B_{\mathrm{n}(x_0)}^-(0,1)$, we have $R\Phi_R(y)+x_0\in \Omega(x_0,R)$ and, therefore, (\ref{mainestimate}) holds with $x:=R\Phi_R(y)+x_0$, $z:=\nabla\varphi(R\Phi_R(y)+x_0)$ and $J:=\nabla\Phi_R(y)$, where $\varphi$ is any function in $\W^{1,p}(\Omega(x_0,R),\R^N)$ so that $\varphi=0$ on $\partial(B(x_0,R))\cap\Omega$.

After making this substitution in (\ref{mainestimate}), and since the inequality holds for every $y\in B_{\mathrm{n}(x_0)}^-(0,1)$, we denote $\tilde{y}_{R,x_0}:=R\Phi_R(y)+x_0$ and integrate over $B_{\mathrm{n}(x_0)}^-(0,1)$ to obtain that

\begin{align}
& \int_{B_{\mathrm{n}(x_0)}^-(0,1)}\left(F(\nabla u(x_0)+\nabla\varphi(\tilde{y}_{R,x_0})\cdot \nabla\Phi_R(y))-F(\nabla u(x_0))\right)\dy\notag\\
&-\int_{B_{\mathrm{n}(x_0)}^-(0,1)}\left\langle F'(\nabla u(x_0)),\nabla\varphi(\tilde{y}_{R,x_0})\cdot \nabla\Phi_R(y)\right\rangle\dy -c_2\int_{B_{\mathrm{n}(x_0)}^-(0,1)}|V(\nabla\varphi(\tilde{y}_{R,x_0})\cdot \nabla\Phi_R(y))|^2\dy\notag\\
&-\int_{B_{\mathrm{n}(x_0)}^-(0,1)}F(\nabla u(\tilde{y}_{R,x_0})+\nabla\varphi(\tilde{y}_{R,x_0}))|\det \nabla\Phi_R(y)|\dy+\int_{B_{\mathrm{n}(x_0)}^-(0,1)}F(\nabla u(\tilde{y}_{R,x_0}))|\det \nabla\Phi_R(y)|\dy\notag\\
&+\int_{B_{\mathrm{n}(x_0)}^-(0,1)}\left\langle F'(\nabla u (\tilde{y}_{R,x_0})),\nabla\varphi(\tilde{y}_{R,x_0})\right\rangle|\det \nabla\Phi_R(y)|\dy\notag\\
&+ c_2\int_{B_{\mathrm{n}(x_0)}^-(0,1)}|V\left(\nabla\varphi(\tilde{y}_{R,x_0})\right)|^2|\det \nabla\Phi_R(y)|\dy\notag\\
= &\int_{B_{\mathrm{n}(x_0)}^-(0,1)}G(x_0,\nabla\varphi(\tilde{y}_{R,x_0})\cdot \nabla\Phi_R(y))\dy -c_2\int_{B_{\mathrm{n}(x_0)}^-(0,1)}|V(\nabla\varphi(\tilde{y}_{R,x_0})\cdot \nabla\Phi_R(y))|^2\dy\notag\\
&-\int_{B_{\mathrm{n}(x_0)}^-(0,1)}G(\tilde{y}_{R,x_0},\nabla\varphi(\tilde{y}_{R,x_0}))|\det \nabla\Phi_R(y)|\dy+ c_2\int_{B_{\mathrm{n}(x_0)}^-(0,1)}\left|V\left(\nabla\varphi(\tilde{y}_{R,x_0})\right)\right|^2|\det \nabla\Phi_R(y)|\dy\notag\\
\leq & \,\varepsilon\int_{B_{\mathrm{n}(x_0)}^-(0,1)}|V(\nabla\varphi(\tilde{y}_{R,x_0}))|^2|\det \nabla\Phi_R(y)|\dy.\label{goodineq}
\end{align}
We are interested in using the quasiconvexity at the boundary condition in order to simplify the above expression. With this aim, we define $\tilde{\varphi}\colon B_{\mathrm{n}(x_0)}^-(0,1)\rightarrow\R^N$ as
\begin{equation*}
\tilde{\varphi}(y):=\frac{\varphi(R\Phi_R(y)+x_0)}{R}=\frac{\varphi(\tilde{y}_{R,x_0})}{R}.
\end{equation*}
Observe that, since $\varphi=0$ on $\partial(B(x_0,R))\cap\Omega$ and the diffeomorphism $\Phi_R^{-1}$ ``flattens'' the part of the boundary of $\frac{\Omega-x_0}{R}$ lying in $B(0,1)$, then 
\begin{equation}
\tilde{\varphi}=0 \mbox{\hspace{4mm} on \hspace{4mm}} \Phi_R^{-1}\left[\partial(B(0,1))\cap\frac{\Omega-x_0}{R}\right]=\partial(B(0,1))\cap B_{\mathrm{n}(x_0)}^-(0,1).
\end{equation}
Hence, by approximation, $\tilde{\varphi}$ is a suitable test function for the quasiconvexity at the free boundary condition. Since $\nabla\tilde{\varphi}(y)= \nabla\varphi(R\Phi_R(y)+x_0)\cdot \nabla\Phi_R(y)$, this means that 
\begin{align}
0 \leq &\int_{B_{\mathrm{n}(x_0)}^-(0,1)}\left(F(\nabla u(x_0)+\nabla\varphi(\tilde{y}_{R,x_0})\cdot \nabla\Phi_R(y))-F(\nabla u(x_0))\right)\dy\notag\\
& -c_2\int_{B_{\mathrm{n}(x_0)}^-(0,1)}|V(\nabla\varphi(\tilde{y}_{R,x_0})\cdot \nabla\Phi_R(y))|^2\dy.\label{QCatbdry}
\end{align}
Moreover, the weak Euler-Lagrange equation associated to the above minimality condition implies that 
\begin{equation}\label{ELQC}
\int_{B_{\mathrm{n}(x_0)}^-(0,1)}\left\langle F'(\nabla u(x_0)),\nabla\varphi(\tilde{y}_{R,x_0})\cdot \nabla\Phi_R(y)\right\rangle\dy=0.
\end{equation}
From  expressions (\ref{goodineq}), (\ref{QCatbdry}) and (\ref{ELQC}) we deduce that
\begin{align}
&-\int_{B_{\mathrm{n}(x_0)}^-(0,1)}F(\nabla u(\tilde{y}_{R,x_0})+\nabla\varphi(\tilde{y}_{R,x_0}))|\det \nabla\Phi_R(y)|\dy+\int_{B_{\mathrm{n}(x_0)}^-(0,1)}F(\nabla u(\tilde{y}_{R,x_0}))|\det \nabla\Phi_R(y)|\dy\notag\\
&+\int_{B_{\mathrm{n}(x_0)}^-(0,1)}\left\langle F'(\nabla u (\tilde{y}_{R,x_0})),\nabla\varphi(\tilde{y}_{R,x_0})\right\rangle|\det \nabla\Phi_R(y)|\dy\notag\\
&+ c_2\int_{B_{\mathrm{n}(x_0)}^-(0,1)}|V\left(\nabla\varphi(\tilde{y}_{R,x_0})\right)|^2|\det \nabla\Phi_R(y)|\dy\notag\\
\leq & \,\varepsilon\int_{B_{\mathrm{n}(x_0)}^-(0,1)}|V(\nabla\varphi(\tilde{y}_{R,x_0}))|^2|\det \nabla\Phi_R(y)|\dy.\notag
\end{align}
Applying the change of variables $x=R\Phi_R(y)+x_0=\tilde{y}_{R,x_0}$, this leads to
\begin{align}
&-\int_{\Omega(x_0,R)}\left(F(\nabla u(x)+\nabla\varphi(x))-F(\nabla u(x))+\left\langle F'(\nabla u(x)),\nabla\varphi(x)\right\rangle + c_2|V\left(\nabla\varphi(x)\right)|^2 \right) \dx\notag\\
\leq & \,\varepsilon\int_{\Omega(x_0,R)}|V(\nabla\varphi(x))|^2\dx.\label{bestineq}
\end{align}
Since $\varphi=0$ on $\partial(B(x_0,R))\cap\Omega$, in particular we have $\varphi=0$ on $\partial(B(x_0,R))\cap B_{\mathrm{n}(x_0)}^-(0,1)$. Therefore, because $u$ is an $F$-extremal, 
\begin{equation*}
\int_{\Omega(x_0,R)}\left\langle F'(\nabla u(x)),\nabla\varphi(x)\right\rangle\dx=0.
\end{equation*}
This, together with (\ref{bestineq}), imply for $\varepsilon=\frac{c_2}{2}$ that 
\begin{align}
&\int_{\Omega(x_0,R)}\left(F(\nabla u(x)+\nabla\varphi(x))-F(\nabla u(x))-c_2|V\left(\nabla\varphi(x)\right)|^2\right)\dx\notag\\
\geq & \,-\frac{c_2}{2}\int_{\Omega(x_0,R)}|V\left(\nabla\varphi(x)\right)|^2\dx,
\end{align}
which gives us the desired inequality after adding $c_2\int_{\Omega(x_0,R)}|V\left(\nabla\varphi(x)\right)|^2\dx$ to both sides of the above expression. 
This concludes the proof of for $x_0\in\Gamma_N$. 

On the other hand, for $R>0$ as defined above and $x_0\in\Omega\cup\Gamma_D$, if there exists $x_1\in\Omega(x_0,\frac{R}{2})\cap \Gamma_N$, then a variation $\varphi\in\mathrm{Var}\left(\Omega\left(x_0,\frac{R}{2}\right)\right)$ can be naturally extended to a variation in $\Omega(x_1,R)$, so that inequality (\ref{ecteo1}) also holds for such $\Omega\left(x_0,\frac{R}{2}\right)$ and $\varphi$.

Finally, if $x_0\in\overline{\Omega}$ and $\Omega(x_0,\frac{R}{2})\cap\Gamma_N=\emptyset$, then a simpler version of the above proof will work, since we can then use that the standard quasiconvexity holds in $\overline{\Omega}$. We  take $\overline{R}=\frac{R}{2}>0$, with $R$ as it was given for the case $x_0 \in\Gamma_N$, and replace $\Phi_R$ by the identity diffeomorphism in the argument above, given that there is no need, for this case, to flatten the boundary. All the other calculations follow in the exact same way. This concludes the proof of the theorem.
\end{proof}

\subsection{New proof of the sufficiency result}\label{SectNewProofSuff}

We now establish Theorem \ref{theoGM}, related to the sufficiency theorem proved by Grabovsky and Mengesha  in \cite{GyM}. The approach that we follow here  consists essentially in appropriately exploiting the result of K. Zhang \cite{Zhang}, that we generalized in the previous section, according to which smooth solutions of the weak Euler-Lagrange equation minimize the functional in \textit{small} subsets of the domain. The idea is then to partition the original domain into sufficiently small sets where we can apply Zhang's result and then add up the corresponding local estimates. This inevitably leads to obtaining an excess. The purpose is hence to prove that such excess converges to zero for a suitably normalized sequence of variations. The Decomposition Theorem and the fundamental theory of Young Measures play a crucial role in this argument, since they enable us to compare the strong positivity of the second variation with the behaviour of the shifted integrand $G$.  

We remark that, in the proof of the following result, the assumption that $u\in C^1(\overline{\Omega},\R^N)$ is mainly required while using the generalized version of Zhang's Theorem. 

\begin{proof}[Proof of Theorem \ref{theoGM}]
We will prove the result arguing by contradiction. Suppose that the theorem does not hold. Then, we can find a sequence $(\varphi_k)\subseteq \mathrm{Var}(\Omega,\R^N)$ such that $\|\varphi_k\|_{\LL^p(\Omega,\R^N)}\rightarrow 0$ and
\begin{equation}\label{forcontGM}
\underset{\Omega}{\int} F(\nabla u +\nabla\varphi_k)\dx<\underset{\Omega}{\int} F(\nabla u)\dx
\end{equation}
for all $k\in\N$. 

As in Lemma \ref{lemmaestimagesG}, we use Taylor's Approximation Theorem and define 
\begin{eqnarray*}
G(x,z)&:=&F(\nabla u(x)+z)-F(\nabla u(x))-\left\langle F'(\nabla u(x)),z\right\rangle\\
&=&\int_0^1(1-t)F''(\nabla u(x)+tz)[z,z]\dt.
\end{eqnarray*}

Note that, since $u$ is an $F$-extremal, for every $k\in\N$ it holds that
\begin{align}
\underset{\Omega}{\int} G(x,\nabla\varphi_k)\dx=&\underset{\Omega}{\int}\int_0^1(1-t)F''(\nabla u+t\nabla\varphi_k)[\nabla\varphi_k,\nabla\varphi_k]\dx\notag\\
=&\underset{\Omega}{\int} \bigl(F(\nabla u+\nabla\varphi_k)-F(\nabla u)-\left\langle F'(\nabla u),\nabla\varphi_k\right\rangle\bigr)\dx\notag\\
<&0.\label{eqGpsi}
\end{align} 

This inequality suggests the underlying idea behind the proof, which is to exploit the strong positivity of the second variation to obtain a contradiction. We split the remaining parts of the argument into the following steps.
\\
\\
\textit{Step 1. Derivation of a global estimate from Zhang's Theorem.} In this step we use Theorem \ref{teo1Zhang} and a covering argument to obtain, for $R=\overline{R}>0$ as given by Zhang's Theorem and for arbitrary $r\in (0,R)$,  a finite sequence of cubes satisfying 
\begin{equation}
\Omega \subseteq\bigcup_{j\in J}Q(x_j,r)
\end{equation}
such that, for every $\varphi\in \mathrm{Var}(\mathcal{A})$ and every $s\in (r,\min\{2r,R\})$, 

\begin{align}\label{eq8}
&\frac{c_2}{2}\underset{\Omega}{\int}|V(\nabla\varphi)|^2 \dx-c\sum_{j\in J}\underset{\Omega(x_j,s)-\Omega(x_j,r)}{\int}\left(|V(\nabla\varphi)|^2+\left|V\left(\frac{\varphi}{s-r}\right)\right|^2\right)\dx\notag\\
\leq& \underset{\Omega}{\int}(F(\nabla u+\nabla\varphi)-F(\nabla u))\dx=\underset{\Omega}{\int}G(x,\nabla\varphi)\dx.
\end{align}

Observe first that, taking $\Omega(x,R)=\Omega\cap Q(x,R)$, by Theorem \ref{teo1Zhang} and Remark \ref{Zhangwithcubes} we have
\begin{equation}\label{eqslm}
\frac{c_2}{2}\underset{\Omega(x,R)}{\int}|V(\nabla\varphi)|^2\dx\leq\underset{\Omega(x,R)}{\int}(F(\nabla u +\nabla\varphi)-F(\nabla u))\dx
\end{equation}
for all $\varphi\in \mathrm{Var}(\Omega(x,R),\R^N)$ and all $x\in\overline{\Omega}$.

Now, for a given $r\in(0,R)$, we consider a cover for $\Omega$ formed of a uniform grid consisting of non-overlapping cubes of side length $2r$, so that 
\begin{equation*}
\Omega\subseteq\bigcup_{j\in J} \overline{Q(x_j,r)}.
\end{equation*}

For each $j\in J$ and for $r<s<R$, consider cut-off functions $\rho_j\in C^1_c(Q(x_j,s))$ with the property that $\mathbbm{1}_{Q(x_j,r)}\leq \rho_j\leq\mathbbm{1}_{Q(x_j,s)}$ and $|\nabla\rho_j|\leq\frac{2}{s-r}$.

Note that the cubes $Q(x_j,s)$ have bounded overlap since, when $s<2r$, $Q(x_j,s)$ will intersect at most $3^n-1$ other such cubes. 

In addition, if $\varphi\in \mathrm{Var}(\Omega,\R^N)$, then $\rho_j\varphi\in \mathrm{Var}(\Omega(x_j,s),\R^N)$ and so, according to (\ref{eqslm}),

\begin{equation*}
\underset{\Omega(x_j,s)}{\int}\left(F(\nabla u)+\frac{c_2}{2}|V(\nabla(\rho_j\varphi))|^2\right)\dx\leq \underset{\Omega(x_j,s)}{\int}F(\nabla u+\nabla(\rho_j\varphi))\dx.
\end{equation*}

Since $u$ is an $F$-extremal, we also have 
\begin{equation*}
\frac{c_2}{2}\underset{\Omega(x_j,s)}{\int}|V(\nabla(\rho_j\varphi))|^2\dx\leq\underset{\Omega(x_j,s)}{\int}G(x,\nabla(\rho_j\varphi))\dx.
\end{equation*}

Then, since $\rho_j=1$ on $Q(x_j,r)$, we obtain 
\begin{align*}
&\frac{c_2}{2}\underset{\Omega(x_j,r)}{\int}|V(\nabla\varphi)|^2\dx+\frac{c_2}{2}\underset{\Omega(x_j,s)-\Omega(x_j,r)}{\int}|V(\nabla(\rho_j\varphi))|^2\dx\\
\leq&\underset{\Omega(x_j,r)}{\int}G(x,\nabla\varphi)\dx+\underset{\Omega(x_j,s)-\Omega(x_j,r)}{\int}G(x,\nabla(\rho_j\varphi))\dx.
\end{align*}

We use Lemma \ref{lemmaestimagesG} (a) to obtain, after adding up the previous inequalities over $j$, that
\begin{align*}
&\frac{c_2}{2}\underset{\Omega}{\int}|V(\nabla\varphi)|^2 \dx+\frac{c_2}{2}\sum_{j\in J}\underset{\Omega(x_j,s)-\Omega(x_j,r)}{\int}|V(\nabla(\rho_j\varphi))|^2\dx\\
\leq& \underset{\Omega}{\int}(F(\nabla u+\nabla\varphi)-F(\nabla u))\dx+c\sum_{j\in J}\underset{\Omega(x_j,s)-\Omega(x_j,r)}{\int}|V(\nabla(\rho_j\varphi))|^2\dx, 
\end{align*}
from where (\ref{eq8}) follows. 
\\
\\
\textit{Step 2. Reduction of the problem to $\W^{1,2}$-local minimizers.} In this step we will establish that $\varphi_k\rightarrow 0$ in $\W^{1,p}$ (and hence also in $\W^{1,2}$). We first show that $(\nabla\varphi_k)$ is bounded, for which we need to make use of the coercivity condition (H3) for the case $\Gamma_N\neq \emptyset$. However, if $\Gamma_N=\emptyset$, we can obtain this via a G{\aa}rding inequality. 
\\
\\
\textit{Case 1.} If $\Gamma_N=\emptyset$, assumptions $\mathrm{(H1)-(H2)}$ and the fact that $\varphi_k\in \W^{1,p}_0(\Omega,\R^N)$ imply that
\begin{align}
c_2\underset{\Omega}{\int}|\nabla\varphi_k|^p\dx&\leq\underset{\Omega}{\int}(F(\nabla\varphi_k)-F(0))\dx\nonumber\\
&\leq\underset{\Omega}{\int}(F(\nabla u+\nabla\varphi_k)+F(\nabla\varphi_k)-F(\nabla u+\nabla\varphi_k)-F(0))\dx\nonumber\\
&\leq \underset{\Omega}{\int}(F(\nabla u+\nabla\varphi_k)+\tilde{c}(1+|\nabla\varphi_k|^{p-1}+|\nabla u+\nabla\varphi_k|^{p-1})|\nabla u|-F(0))\dx.\label{firststepGarding}
\end{align}
Observe here that 
\begin{equation*}
c_2\underset{\Omega}{\int}\left(\frac{1}{2^{p-1}}|\nabla u+\nabla\varphi_k|^p-|\nabla u|^p\right)\dx\leq c_2\underset{\Omega}{\int}|\nabla\varphi_k|^p\dx
\end{equation*}
and, on the other hand, by Young's inequality applied to $c_2c_p|\nabla u+\nabla\varphi_k|^{p-1}c_p^{-1}|\nabla u|$ with an appropriate choice of the constant $c_p$, we have that
\begin{align*}
&\tilde{c}\underset{\Omega}{\int}(1+|\nabla\varphi_k|^{p-1}+|\nabla u+\nabla\varphi_k|^{p-1})|\nabla u|\dx\notag\\
\leq&c\underset{\Omega}{\int}(1+|\nabla u+\nabla\varphi_k|^{p-1}+|\nabla u|^{p-1})|\nabla u|\dx\\
\leq&\underset{\Omega}{\int}\left(\frac{c_2}{2^p}|\nabla u+\nabla\varphi_k|^p+c|\nabla u|^p+c|\nabla u|\right)\dx.
\end{align*}
Therefore, 
\begin{eqnarray*}
\frac{c_2}{2^{p-1}}\underset{\Omega}{\int}|\nabla u+\nabla\varphi_k|^p\dx\leq\underset{\Omega}{\int}\left(F(\nabla u+\nabla\varphi_k)-F(0)+c|\nabla u|+\left(c+c_2+\frac{2c^2}{c_2}\right)|\nabla u|^p\right)\dx
\end{eqnarray*}
or, equivalently, there are constants $\tilde{c}_3>0$ and $\tilde{c}_4>0$ such that 
\begin{equation*}
\tilde{c}_3\underset{\Omega}{\int}|\nabla u+\nabla\varphi_k|^p\dx\leq\underset{\Omega}{\int}F(\nabla u+\nabla\varphi_k)\dx+\tilde{c}_4\underset{\Omega}{\int}(1+|\nabla u|^p)\dx
\end{equation*} 
for all $k\in\N$.

This, together with assumption (\ref{forcontGM}) and Poincar\'{e} inequality, finally allows us to conclude that $(\varphi_k)$ is bounded in $\W^{1,p}$.
\\
\\
\textit{Case 2.} If $\Gamma_n\neq\emptyset$, then $(\varphi_k)$ is bounded in $\W^{1,p}(\Omega,\R^N)$ by assumptions (H3) and (\ref{forcontGM}). We remark that the reason why we cannot proceed, as in Case 1, to obtain a G{\aa}rding inequality without this assumption, is that $\varphi_k\notin\W^{1,p}_0(\Omega,\R^N)$ and, therefore, we cannot obtain (\ref{firststepGarding}) from the quasiconvexity condition.
\\

Having established that $(\varphi_k)$ is bounded in $\W^{1,p}$, we can further conclude that $\varphi_k\rightharpoonup 0$ in $\W^{1,p}(\Omega,\R^N)$.

%
%
%

Now, let $\gamma_k:=\|V(\nabla\varphi_k)\|_{\LL^2}$. Then, $\gamma_k>0$ for all $k\in\N$ and $(\gamma_k)$ is  bounded because $p\geq 2$. We will now show that  $\gamma_k\rightarrow 0$ as $k\rightarrow\infty$. Arguing by contradiction, we assume that there are $\gamma>0$ and  a subsequence, that we do not relabel, such that $\gamma_k\rightarrow\gamma$. Considering a further subsequence, we may also assume that $|V(\nabla\varphi_k)|^2\mathcal{L}^n\overset{*}{\rightharpoonup}\mu$ in $C^0\left(\overline{\Omega}\right)^*\cong\mathcal{M}\left(\overline{\Omega}\right)$.

We now take $r\in(0,R)$ and the grid so that $\mu\left(\bigcup_{j\in J} (\partial( Q(x_j,r))\cap\overline{\Omega})\right)=0$. This is possible because, \textit{for a given $x_0$}, only a countable amount of cubes can be such that $\mu(\partial Q(x_0,r))>0$. 

Now observe that, for $r<s<\min\{2r,R\}$, we get from inequality (\ref{eq8}) applied to $\varphi=\varphi_k$, that

\begin{align*}
&\frac{c_2}{2}\underset{\Omega}{\int}|V(\nabla\varphi_k)|^2 \dx-c\sum_{j\in J}\underset{\Omega(x_j,s)-\Omega(x_j,r)}{\int}\left(|V(\nabla\varphi_k)|^2+\left|V\left(\frac{\varphi_k}{s-r}\right)\right|^2\right)\dx\notag\\
\leq &\underset{\Omega}{\int}(F(\nabla u+\nabla\varphi_k)-F(\nabla u))\dx.
\end{align*}
Recall that, by assumption, $\varphi_k\rightarrow 0$ in $\LL^p(\Omega,\R^N)$ and, since $p\geq 2$, this implies that  $V(\varphi_k)\rightarrow 0$ in $\LL^2(\Omega,\R^N)$. Hence,

\begin{equation*}
\frac{c_2}{2}\gamma^2-c\mu\left( \overline{\Omega}\cap\bigcup_{j\in J}\left(\overline{Q(x_j,s)}-Q(x_j,r)\right) \right)\leq 0
\end{equation*}
and, letting $s\searrow r$ in the above expression, we get
\begin{equation*}
0<\frac{c_2}{2}\gamma^2=\frac{c_2}{2}\gamma^2-c\mu\left(\overline{\Omega}\cap\bigcup_{j\in J}\partial(Q(x_j,r)) \right)\leq 0, 
\end{equation*}
which is a contradiction. 

Consequently, $\gamma_k=\|V(\nabla\varphi_k)\|_{\LL^2}\rightarrow 0$.

Let $\alpha_k:= \|\nabla\varphi_k\|_{\LL^2}$ and $\beta_k:= (2|\Omega|)^{\frac{1}{2}-\frac{1}{p}}\|\nabla\varphi_k\|_{\LL^p}$. By Lemma \ref{lemmapropsV} we also have that $\alpha_k\rightarrow 0$ and $\beta_k\rightarrow 0$.  This way, we have reduced the problem to the case of $\W^{1,2}$-local minimizers. 
\\
\\
\textit{Step 3. Limit of the shifted functional at the normalized sequence.} We now define  $\psi_k:=\alpha_k^{-1}\varphi_k\in \mathrm{Var}(\Omega,\R^N)$. Hereby, $\underset{\Omega}{\int}|\nabla\psi_k|^2=1$ and hence we can assume, up to a subsequence, that $\psi_k\rightharpoonup \psi$ in $\W^{1,2}(\Omega,\R^N)$, $|\nabla\psi_k|^2\mathcal{L}^n\overset{*}{\rightharpoonup}\tilde{\mu}$ in $C^0_0(\overline{\Omega})^*$ and that $\nabla\psi_k\overset{Y}{\longrightarrow}(\nu_x)$ for some Young measure $(\nu_x)$. The purpose of this step is to show that
\begin{equation}\label{nonpositivesecondvar}
\frac{1}{2}\underset{\Omega}{\int}\int F''(\nabla u)[z,z]\dv{}\nu_x(z)\dx\leq0.
\end{equation}
The main idea will be to use the Decomposition Theorem (Theorem \ref{DecLemma}) and  estimate (\ref{eq8}) to prove that the concentrating part of $(\nabla\psi_k)$ does not change the sign of the second variation at the normalized sequence in the limit, while the oscillating part will be estimated by the left hand side of (\ref{nonpositivesecondvar}).

We recall that, by H\"{o}lder's inequality,
\begin{equation*}
\alpha_k= \|\nabla\varphi_k\|_{\LL^2}\leq (2|\Omega|)^{\frac{1}{2}-\frac{1}{p}}\|\nabla\varphi_k\|_{\LL^p}=\beta_k.
\end{equation*}
Therefore, $r_k:=\frac{\alpha_k}{\beta_k}\leq 1$ for every $k\in\N$.

We now claim that the given sequence of variations $(\varphi_k)$ is such that 
\begin{equation}\label{betapbyalpha2bded}
0\leq\underset{k\in\N}{\sup}\frac{\beta_k^p}{\alpha_k^2}=\Lambda<\infty
\end{equation}
for some real number $\Lambda>0$. Indeed, if $p=2$ this is trivially true and, if $p>2$, from the coercivity condition  (H3) applied to $\varphi_k$ it follows, after dividing by $\alpha_k^2$, that for every $k\in\N$, 
\begin{equation*}
\tilde{c}_4\frac{\beta_k^p}{\alpha_k^2}-c_5\leq\alpha_k^{-2}\underset{\Omega}{\int}\left(F(\nabla u+\nabla\varphi_k)-F(\nabla u)\right)\dx<0.
\end{equation*}
Whereby, the sequence $\left( \frac{\beta_k^p}{\alpha_k^2}\right)$ is bounded and the claim follows.

This, together with the the fact that $\underset{\Omega}{\int}|r_k\nabla\psi_k|^p=\beta_k^{-p}\underset{\Omega}{\int}|\nabla\varphi_k|^p=1$ and the Decomposition Theorem, implies that, for a subsequence of $(\psi_k)$ that we do not relabel, we can find sequences $(g_k)\subseteq \W^{1,2}_0(\Omega,\R^N)$ and $(b_k)\subseteq\mathrm{Var}(\Omega,\R^N)$ such that:
\begin{itemize}
\item $g_k\rightharpoonup 0$ and $b_k\rightharpoonup 0$ in $\W^{1,2}(\Omega,\R^N)$;
\item $r_k g_k\rightharpoonup 0$ and $r_k b_k\rightharpoonup 0$ in $\W^{1,p}(\Omega,\R^N)$;
\item $(|\nabla g_k|^2)$ and $(|r_k\nabla g_k|^p)$ are both equiintegrable;
\item $\nabla b_k\rightarrow 0$ in measure and
\item $\psi_k=\psi +g_k+b_k$.
\end{itemize}

Let us call $f_k:=\alpha_k^{-2}G(x,\alpha_k\nabla\psi_k)-\alpha_k^{-2}G(x,\alpha_k\nabla b_k)$. Then, by using Lemma \ref{lemmaestimagesG} and the Lipschitz property of $F$, we get that since $p\geq 2$ and $G(x,\cdot)$ is  quasiconvex  (and, therefore, also rank-one convex), there is a constant $c=c(p)>0$ such that, for every $z,w\in\R^{N\times n}$ and for every $x\in\overline{\Omega}$,
\begin{equation*}
|G(x,z)-G(x,w)|\leq c\left(|V_{p-1}(z)|+|V_{p-1}(w)|\right)|z-w|.
\end{equation*}
The proof of this inequality relies also on the fact that, for some constant $c>0$, 
\begin{equation*}
c^{-1}(|z|+|z|^{p-1})\leq|V_{p-1}(z)|\leq c(|z|+|z|^{p-1}).
\end{equation*}
This implies that, for any $\varepsilon>0$, there exists a constant $c_\varepsilon$ such that 
\begin{align*}
|f_k|&\leq c\alpha_k^{-1}(|V_{p-1}(\alpha_k\nabla\psi_k)|+|V_{p-1}(\alpha_k\nabla b_k)|)|\nabla\psi+\nabla g_k|\\
&\leq c\left(|\nabla\psi_k|+|\nabla b_k|+\alpha_k^{p-2}(|\nabla\psi_k|^{p-1}+|\nabla b_k|^{p-1})\right)|\nabla\psi+\nabla g_k|\\
&\leq\varepsilon \left(|\nabla\psi_k|^2+|\nabla b_k|^2+ \alpha_k^{p-2}(|\nabla\psi_k|^p+|\nabla b_k|^p )\right)+c_\varepsilon\left(|\nabla\psi+\nabla g_k|^2+\alpha_k^{p-2}|\nabla\psi+\nabla g_k|^p\right).
\end{align*}
Consequently, we can observe that for any set $A\subseteq\R^n$, 
\begin{equation}
\underset{A}{\int}|f_k|\dx\leq\varepsilon c_1+\tilde{c}_\varepsilon\underset{A}{\int}\left(|\nabla\psi+\nabla g_k|^2+\alpha_k^{p-2}|\nabla\psi+\nabla g_k|^p\right)\dx.
\end{equation}
Taking into account that $\left(\frac{\beta_k^p}{\alpha_k^2}\right)$ is  bounded and that
\begin{equation*}
\alpha_k^{p-2}|\nabla g_k|^p= \frac{\beta_k^p}{\alpha_k^2}r_k^p|\nabla g_k|^p, 
\end{equation*}
we deduce that $(\alpha_k^{p-2}|\nabla g_k|^p)$ is equiintegrable and, hence, so is $(f_k)$. 

Now, let $\varepsilon>0$. Since $(\nabla\psi_k)$ is measure-tight\footnote{A sequence $f_j\colon\Omega\rightarrow \R^d$ is measure-tight if $\underset{t\rightarrow\infty}{\lim}\underset{j\in\N}{\sup}\mathcal{L}^n(\{x\in\Omega\st|f_j(x)|>t\})=0$, which always holds for sequences that generate Young measures.} and $\nabla b_k\rightarrow 0$ in measure, we can take $m_\varepsilon>0$ large enough so that, for every $m\geq m_\varepsilon$, 
\begin{equation*}
\underset{\{|\nabla\psi_k|\geq m\}\cup\{|\nabla b_k|\geq m\}}{\int}|f_k|\dx<\varepsilon
\end{equation*}
for all $k\in\N$.\\
Then, for all $m\geq m_\varepsilon$, 
\begin{equation*} 
\underset{\{|\nabla\psi_k|< m\}\cap\{|\nabla b_k|< m\}}{\int}f_k\dx-\varepsilon<\underset{\Omega}{\int} f_k\dx.
\end{equation*}
We will now use the Fundamental Theorem of Young measures to take the limit inferior at both sides of the above expression and obtain that 
\begin{equation}\label{preeqtripstar}
\frac{1}{2}\underset{\Omega}{\int}\int F''(\nabla u)[z,z] \mathbbm{1}_{B(0,m)}(z)\dv{}\nu_x(z)\dx-\varepsilon\leq\liminf_{k\rightarrow\infty}\underset{\Omega}{\int} f_k\dx
\end{equation}
for all $m\geq m_\varepsilon$.
In order to prove this claim, consider the integrand $H:\overline{\Omega}\times\R^{N\times n}\rightarrow\R$ given by
\begin{equation*}
H(x,z):=F''(\nabla u(x))[z,z]\mathbbm{1}_{B(0,m)}(z).
\end{equation*}
Notice that $H(x,\cdot)$ is lower semicontinuous for every $x\in\overline{\Omega}$. By the Fundamental Theorem of Young measures, this implies that, since $\nabla\psi_k\overset{Y}{\longrightarrow}\nu_x$,
\begin{equation}\label{D2FbyFTYM}
\underset{\Omega}{\int}\int F''(\nabla u)[z,z]\mathbbm{1}_{B(0,m)}(z)\dv{}\nu_x(z)\dx\leq \liminf_{k\rightarrow\infty}\underset{|\nabla\psi_k|<m}{\int}F''(\nabla u)[\nabla\psi_k,\nabla\psi_k]\dx.
\end{equation}
On the other hand, the sequence of functions
\begin{equation*}
F''(\nabla u+t\alpha_k\nabla b_k)[\nabla b_k,\nabla b_k]\mathbbm{1}_{\{|\nabla b_k|<m \}\cap\{ |\nabla \psi_k|<m \}}
\end{equation*}
is bounded in $\LL^\infty(\Omega)$ for all $t\in[0,1]$ and, therefore, it is equiintegrable. In addition, this sequence converges to $0$ in measure because $\nabla b_k\rightarrow 0$ in measure and $F''$ is continuous. These two facts imply, by Vitali's Convergence Theorem, that
\begin{equation}\label{D2FbyVCT}
F''(\nabla u+t\alpha_k\nabla b_k)[\nabla b_k,\nabla b_k]\mathbbm{1}_{\{|\nabla b_k|<m \}\cap\{ |\nabla \psi_k|<m \}}\rightarrow 0
\end{equation}
in $\LL^1(\Omega)$ when $k\rightarrow\infty$ and for all $t\in[0,1]$. 

It is also clear, by the Dominated Convergence Theorem, that since $\alpha_k\rightarrow 0$, 
\begin{equation}\label{withandwithoutalpa}
\left|\underset{\Omega}{\int}\int_0^1(1-t)\left(F''(\nabla u+t\alpha_k\nabla\psi_k)-F''(\nabla u)\right)[\nabla\psi_k,\nabla\psi_k]\mathbbm{1}_{\{|\nabla b_k|<m \}\cap\{ |\nabla \psi_k|<m \}}\dt\dx\right|\rightarrow 0.
\end{equation}
Furthermore, given that $\nabla b_k\rightarrow 0$ in measure, we have that 
\begin{align}\label{withandwithouhtb}
&\left| \underset{\Omega}{\int}F''(\nabla u)[\nabla\psi_k,\nabla\psi_k]\left(\mathbbm{1}_{\{|\nabla \psi_k|<m \}}-    \mathbbm{1}_{\{|\nabla b_k|<m \}\cap\{ |\nabla \psi_k|<m \}}\right)\dx \right|\notag\\
\leq &cm^2\underset{\Omega}{\int}\mathbbm{1}_{\{|\nabla \psi_k|<m \}}\left(1- \mathbbm{1}_{\{|\nabla b_k|<m \}}   \right)\dx\rightarrow 0. 
\end{align}
By combining (\ref{D2FbyFTYM})-(\ref{withandwithouhtb}), we obtain that (\ref{preeqtripstar}) holds for all $m\geq m_\varepsilon$.

We now claim that
\begin{equation}\label{preeqtripstarbis}
\frac{1}{2}\underset{\Omega}{\int}\int F''(\nabla u)[z,z]\dv{}\nu_x(z)\dx-2\varepsilon\leq\underset{k\rightarrow \infty}{\liminf}\underset{\Omega}{\int} f_k\dx.
\end{equation}
Indeed, since by the Fundamental Theorem for Young Measures $(\nu_x)$ has a finite second moment, meaning that $\underset{\Omega}{\int}\underset{\hspace{1mm}\R^{N\times n}}{\int}|z|^2\dv\nu_x(z)\dx<\infty$, we can find $m\geq m_\varepsilon$ large enough so that
\begin{equation*}
\left|\underset{\Omega}{\int}\underset{\R^{N\times n}}{\int}F''(\nabla u(x))[z,z]\mathbbm{1}_{\R^{N\times n}\backslash\overline{B(0,m)}}(z)\dv\nu_x(z)\dx\right|<\varepsilon.
\end{equation*}
By letting $\varepsilon\rightarrow 0$ in (\ref{preeqtripstarbis}), we conclude that
\begin{equation}\label{eqtripstar}
\frac{1}{2}\underset{\Omega}{\int}\int F''(\nabla u)[z,z]\dv{}\nu_x(z)\dx\leq\liminf_{k\rightarrow\infty}\underset{\Omega}{\int} f_k\dx.
\end{equation}
We now show that
\begin{align}\label{0leqGbk}
0&\leq\underset{k\rightarrow\infty}{\liminf}\underset{\Omega}{\int}\alpha_k^{-2}G(x,\alpha_k\nabla b_k)\dx.
\end{align}
We take $\varphi=\alpha_kb_k$ in inequality (\ref{eq8}) and recall that $c_p^{-1}(|\xi|^2+|\xi|^p)\leq |V(\xi)|^2\leq c_p(|\xi|^2+|\xi|^p)$. Using that $u$ is an $F$-extremal and $b_k\in \mathrm{Var}(\Omega,\R^N)$, after dividing by $\alpha_k^2$ we get, for some constant $c_p>0$,
\begin{align*}
&\frac{c_2c_p}{2}\underset{\Omega}{\int}\left(|\nabla b_k|^2 +\alpha_k^{p-2}|\nabla b_k|^p   \right)\dx\notag\\
&-c\sum_{j\in J}\underset{\Omega(x_j,s)-\Omega(x_j,r)}{\int}\left(|\nabla b_k|^2+\alpha_k^{p-2}|\nabla b_k|^p +\frac{| b_k|^2}{(s-r)^2}+\alpha_k^{p-2}\frac{| b_k|^p}{(s-r)^p}  \right)\dx\notag\\
\leq &\alpha_k^{-2}\underset{\Omega}{\int}(G(x,\alpha_k\nabla b_k)\dx
\end{align*}
for every $s,r$ such that $\frac{R}{2}<r<s<R$.

Notice that 
\begin{equation*}
\alpha_k^{p-2}\left(|b_k|^p+|\nabla b_k|^p\right)=\frac{\beta_k^p}{\alpha_k^2}r_k^p\left(|b_k|^p+|\nabla b_k|^p\right).
\end{equation*}
Since $r_kb_k\rightharpoonup 0$ in $\W^{1,p}(\Omega,\R^N)$, we use again that $\left( \frac{\beta_k^p}{\alpha_k^2}\right) $ is bounded and deduce that, for a subsequence that we do not relabel, it also holds that 

\begin{equation*}
\alpha_k^{\frac{p-2}{p}}b_k ={\beta_k}{\alpha_k^{-\frac{2}{p}}}r_kb_k   \rightharpoonup 0 \hspace{7mm}\mathrm{ in } \hspace{7mm} \W^{1,p}(\Omega,\R^N).
\end{equation*}
We can now use this, and the fact that $b_k\rightharpoonup 0$ in $\W^{1,2}(\Omega,\R^N)$, to proceed exactly as we did to prove that $\alpha_k\rightarrow 0$ and whereby conclude that
\begin{align}
0&\leq \frac{c_2c_p}{2}\underset{k\rightarrow\infty}{\liminf}\underset{\Omega}{\int} |\nabla b_k|^2+\alpha_k^{p-2}|\nabla b_k|^p\dx\notag\\
= \,&\underset{k\rightarrow\infty}{\liminf}\left[\frac{c_2c_p}{2}\underset{\Omega}{\int}\left(|\nabla b_k|^2 +\alpha_k^{p-2}|\nabla b_k|^p   \right)\dx\right.\notag\\
&\left.-c\sum_{j\in J}\underset{\Omega(x_j,s)-\Omega(x_j,r)}{\int}\left(|\nabla b_k|^2+\alpha_k^{p-2}|\nabla b_k|^p +\frac{| b_k|^2}{(s-r)^2}+\alpha_k^{p-2}\frac{| b_k|^p}{(s-r)^p}  \right)\dx\right]\notag\\
 \leq\,&\underset{k\rightarrow\infty}{\liminf}\underset{\Omega}{\int}\alpha_k^{-2}G(x,\alpha_k\nabla b_k)\dx\notag.
\end{align}
Hence, (\ref{0leqGbk}) is settled.

Using this, together with (\ref{eqGpsi}) and (\ref{eqtripstar}), we get 
\begin{align}
\frac{1}{2}\underset{\Omega}{\int}\int F''(\nabla u)[z,z]\dv{}\nu_x(z)\dx&\leq\underset{k\rightarrow\infty}{\liminf}\underset{\Omega}{\int}f_k\dx+\underset{k\rightarrow\infty}{\liminf}\underset{\Omega}{\int}\alpha_k^{-2}G(x,\alpha_k\nabla b_k)\dx\notag\\
&\leq\underset{k\rightarrow\infty}{\liminf}\left(\underset{\Omega}{\int}f_k\dx+\underset{\Omega}{\int}\alpha_k^{-2}G(x,\alpha_k\nabla b_k)\right)\dx\notag\\
&=\underset{k\rightarrow\infty}{\liminf}\underset{\Omega}{\int}\alpha_k^{-2}G(x,\alpha_k \psi_k)\dx\notag\\
&\leq0\notag,
\end{align}
from which (\ref{nonpositivesecondvar}) follows.
\\
\\
\textit{Step 4. Final contradiction from the strong positivity of the second variation applied to the normalized sequence.} In this step we will show that $\psi_k \rightharpoonup 0$ in $\W^{1,p}$, from which a contradiction will follow after using inequality (\ref{eq8}). The main idea will be to use the complementing behaviour coming from (\ref{nonpositivesecondvar}) and the strong positivity of the second variation. 

We first claim that
\begin{align}\label{QCforquadwithYM}
\frac{1}{2}\underset{\Omega}{\int}F''(\nabla u(x))[\overline{\nu}_x,\overline{\nu}_x]\dx+{c_2}\underset{\Omega}{\int}\int|z-\overline{\nu}_x|^2\dv{}\nu_x(z)\dx&\leq\frac{1}{2}\underset{\Omega}{\int}\int F''(\nabla u(x))[z,z]\dv{}\nu_x(z)\dx.
\end{align}
Indeed, since  $F$ is strongly quasiconvex, for every $x\in\Omega$ the quadratic function
\begin{equation*}
\eta\mapsto F''(\nabla u(x))[\eta,\eta]-2c_2|\eta-\overline{\nu}_x|^2
\end{equation*}
is quasiconvex. To verify this, it is enough to compute the second variation of the functional $\mathcal{G}(v):=\int_\Omega F(\nabla v)-F(\nabla u(x))-c_2|V(\nabla v-\nabla u(x))|^2\dy$ at the affine function $v(y):= \nabla u(x)y$ and use the minimality property associated to quasiconvexity. 

Hence, by Jensen's inequality (see \cite{BallZhang}), we obtain (\ref{QCforquadwithYM}) after integrating over $\Omega$.

In addition, since $\overline{\nu}_x=\nabla\psi(x)$ and the second variation is strictly positive, we obtain from (\ref{nonpositivesecondvar}) and (\ref{QCforquadwithYM}) that
\begin{align}
\frac{c_3}{2}\underset{\Omega}{\int}|\nabla\psi|^2\dx+c_2\underset{\Omega}{\int}\int|z-\nabla\psi|^2\dv{}\nu_x(z)\dx&\leq 0
\end{align}
and thus, using Poincar\'e inequality, we can conclude that $\psi=0$ and $\nu_x=\delta_0$. This implies that $\nabla\psi_k\rightharpoonup 0$ in $\LL^2(\Omega,\R^N)$ and, furthermore, that $\nabla\psi_k\rightarrow 0$ in measure.

On the other hand, taking $\varphi=\alpha_k\psi_k$ in inequality (\ref{eq8}) we obtain, after dividing by $\alpha_k^2$, that

\begin{align*}
&\frac{c_2\tilde{c_p}}{2}\underset{\Omega}{\int}|\nabla\psi_k|^2\dx\notag\\
-&c\sum_{j\in J}\underset{\Omega(x_j,s)-\Omega(x_j,r)}{\int}\left(|\nabla\psi_k|^2+\alpha_k^{p-2}|\nabla\psi_k|^p +\frac{|\psi_k|^2}{(s-r)^2}+\alpha_k^{p-2}\frac{|\psi_k|^p}{(s-r)^p}\right)\dx\\
\leq&\frac{c_2\tilde{c_p}}{2}\underset{\Omega}{\int}\left(|\nabla\psi_k|^2+\alpha_k^{p-2}|\nabla\psi_k|^p\right)\dx\notag\\
-&c\sum_{j\in J}\underset{\Omega(x_j,s)-\Omega(x_j,r)}{\int}\left(|\nabla\psi_k|^2+\alpha_k^{p-2}|\nabla\psi_k|^p +\frac{|\psi_k|^2}{(s-r)^2}+\alpha_k^{p-2}\frac{|\psi_k|^p}{(s-r)^p}\right)\dx<0
\end{align*}
for all $k\in\N$ and for all $r,s$ such that $\frac{R}{2}<r<s<R$.
Observe that
\begin{equation*}
\alpha_k^{\frac{p-2}{p}}\psi_k= \beta_k\alpha_k^{-\frac{2}{p}}r_k\psi_k.
\end{equation*}
Hence, for a subsequence that we do not relabel, we use again (\ref{betapbyalpha2bded}) to further conclude that $\alpha_k^{\frac{p-2}{p}}\psi_k \rightharpoonup 0$ in $\W^{1,p}(\Omega,\R^N)$. Arguing exactly as we did to prove that $\gamma_k\rightarrow 0$ and inequality (\ref{0leqGbk}), we can now take the limit when $k\rightarrow\infty$ and use the property $\underset{\Omega}{\int}|\nabla\psi_k|^2\dx=1$,  to obtain that
\begin{equation*}
0<\frac{c_2\tilde{c_p}}{2}\leq 0,
\end{equation*}
which is a contradiction. This concludes the proof of the theorem. 
\end{proof}
We recall that, as an application of this theorem, in \cite[Section 6]{GyM} Grabovsky \& Mengesha constructed an interesting class of strong local minimizers that are not global minimizers. Further examples of this situation can be found in \cite{KohnStern,Taheritopology}. We highlight the relevance that the following sections acquire in this context, since they are concerned with regularity results for strong local minimizers. 

\section{Lipschitz extremals and $\mathrm{BMO}$-local minimizers}\label{SectLipExtBMO}

It is well known that, by using Taylor Approximation Theorem,  Lipschitz extremals at which the second variation is positive can be shown to be weak local minimizers of the functional $\mathcal{F}$.

In this section we will establish that Lipschitz solutions to the weak Euler-Lagrange equation at which the second variation is positive, are slightly more than merely weak local minimizers. This result is inspired by \cite[Theorem 6.1]{Kristensen}, where the same is proved for the case of homogeneous integrands and by assuming that the variations are uniformly bounded. See also \cite{Firoozye}.

We also remark that we work under the assumption of strong quasiconvexity ($\mathrm{\overline{H}}2$), which is not necessary to show that Lipschitz extremals with strictly positive second variation are weak local minimizers. More precisely, rank one convexity and $p$-growth are enough in that case.


We first state the following definitions.
\begin{definition}
Let $\phi\in \LL^1(\Omega,\R^{N\times n})$. We say that $\phi$ is of \textbf{bounded mean oscillation} if and only if 
\begin{equation*}
\underset{B(x,r)\subseteq\Omega}{\sup}\underset{B(x,r)}{\mint}|\phi-(\phi)_{x,r}|\dy<\infty.
\end{equation*}
In this case, we define the semi-norm
\begin{equation*}
[\phi]_{\mathrm{BMO}(\Omega,\R^{N\times n})}:=\underset{B(x,r)\subseteq\Omega}{\sup}\underset{B(x,r)}{\mint}|\phi-(\phi)_{x,r}|\dx<\infty
\end{equation*}
and we set
\begin{equation*}
\mathrm{BMO}(\Omega,\R^{N\times n}):=\{\phi\in \LL^1(\Omega,\R^{N\times n})\st [\phi]_{\mathrm{BMO}(\Omega,\R^{N\times n})}<\infty\}.
\end{equation*}
\end{definition}

We recall the following definition regarding the space that consists of the closure of $C_0^0(\Omega,\R^{N\times n})$ in $\mathrm{BMO}(\Omega,\R^{N\times n})$.
\begin{definition}\label{defVMO}
Let $\phi\in \mathrm{BMO}(\Omega,\R^{N\times n})$. We say that $\phi$ is of \textbf{vanishing mean oscillation} if and only if
\begin{equation*}
\underset{\rho\rightarrow 0}{\lim}\,\underset{r\leq\rho}{\sup}\,\underset{B(x,r)\subseteq\Omega}{\sup}\underset{B(x,r)}{\mint}|\phi-(\phi)_{x,r}|\dy=0
\end{equation*}
and we set
\begin{equation*}
\mathrm{VMO}(\Omega,\R^{N\times n}):=\{\phi\in \mathrm{BMO}(\Omega,\R^{N\times n})\st\ \phi \text{ is of vanishing mean oscillation}\}.
\end{equation*}
Furthermore, we define 
\begin{equation*}
\mathrm{VMO}_0(\Omega,\R^{N\times n}):=\left\{\phi\in \mathrm{VMO}(\Omega,\R^{N\times n})\st\ \tilde{\phi}\in\mathrm{VMO}(\R^n,\R^{N\times n}),  \,\,\, \tilde{\varphi}=
0 \mbox{ in }\R^n-\Omega,\,\,\,\tilde{\varphi}\hspace{-1mm}\restriction_\Omega=\varphi 
\right\}.
\end{equation*}
and, for $h\in \mathrm{VMO}(\tilde{\Omega},\R^{N\times n})$, $\tilde{\Omega}$ a neighbourhood of $\overline{\Omega}$,\footnote{Notice that, by  \cite[Lemma 5]{BrezisNirenbergBMOII}, if $\Omega$ is a smooth bounded domain, a function $h\in\mathrm{VMO}(\partial\Omega,\R^{N\times n})$ can be extended to a function $\tilde{h}\in \mathrm{VMO}(\tilde{\Omega},\R^{N\times n})$ as used above. The definition of BMO and VMO for maps between manifolds can be found in \cite[SI.1]{BrezisNirenbergBMOI}.} we let
\begin{equation*}
\mathrm{VMO}_h(\Omega,\R^{N\times n}):=\left\{ \phi\in \mathrm{VMO}(\Omega,\R^{N\times n})\st\ \phi-h\in \mathrm{VMO}_0(\Omega,\R^{N\times n})    \right\}.
\end{equation*}
\end{definition}
\begin{remark}
$\mathrm{VMO}(\Omega,\R^{N\times n})$ is, indeed, the closure of $C_0^0(\Omega,\R^{N\times n})$ in $\mathrm{BMO}(\Omega,\R^{N\times n})$. See \cite{BrezisNirenbergBMOI,BrezisNirenbergBMOII} for a comprehensive discussion on this and other properties of the VMO space, including the notion of trace in this context.
\end{remark}

The main result in this section can now be stated as follows.
\begin{theorem}\label{morethanWLMVMO}
Let $F\colon\overline{\Omega}\times\R^{N\times n}\rightarrow\R$ be a function satisfying $\mathrm{(\overline{H}0)-(\overline{H}2)}$ for some $1<p<\infty$ and such that $F(x_0,\cdot)$ is quasiconvex for every $x_0\in\overline{\Omega}$. Let $\overline{u}\in \W^{1,\infty}(\Omega,\R^N)$ be an extremal with strictly positive second variation, i.e., for some $c_3>0$ and all $\varphi\in C_0^\infty(\Omega,\R^N)$, 
\begin{equation}
\label{uextremalVMO}
\underset{\Omega}{\int}\left\langle F_z(\cdot,\nabla\overline{u}),\nabla\varphi\right\rangle\dx=0
\end{equation}
and
\begin{equation}
\label{secvarposVMO}
c_3\underset{\Omega}{\int}|\nabla\varphi|^2\dx\leq \underset{\Omega}{\int}F_{zz}(\cdot,\nabla\overline{u})[\nabla\varphi,\nabla\varphi]\dx. 
\end{equation}
Then, there is a $\delta>0$ such that
\begin{equation*}
\underset{\Omega}{\int}F(\cdot,\nabla\overline{u})\dx\leq \underset{\Omega}{\int}F(\cdot,\nabla\overline{u}+\nabla\varphi)\dx
\end{equation*}
for every $\varphi\in C_0^\infty(\Omega,\R^N)$ with $[\nabla\varphi]_{\mathrm{BMO}}\leq\delta$.
\end{theorem}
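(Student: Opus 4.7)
The plan is a second-order Taylor expansion of $\mathcal{F}$ around $\overline{u}$, with the positivity of the second variation absorbing the remainder once the latter is controlled by $\int_\Omega |\nabla\varphi|^2\dx$. Writing
\[
\int_\Omega \bigl(F(x, \nabla \overline{u} + \nabla \varphi) - F(x, \nabla \overline{u})\bigr)\dx = \int_\Omega \langle F_z(x, \nabla \overline{u}), \nabla \varphi \rangle\dx + \tfrac{1}{2} \int_\Omega F_{zz}(x, \nabla \overline{u})[\nabla \varphi, \nabla \varphi]\dx + E(\varphi),
\]
with
\[
E(\varphi) := \int_0^1 (1-s)\int_\Omega \bigl(F_{zz}(x, \nabla \overline{u} + s \nabla \varphi) - F_{zz}(x, \nabla \overline{u})\bigr)[\nabla \varphi, \nabla \varphi]\dx\dv{s},
\]
hypothesis (\ref{uextremalVMO}) kills the first-order term and (\ref{secvarposVMO}) bounds the quadratic form below by $\tfrac{c_3}{2}\int_\Omega |\nabla\varphi|^2\dx$. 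Hence it suffices to prove $|E(\varphi)|\leq\tfrac{c_3}{2}\int_\Omega|\nabla\varphi|^2\dx$ for every $\varphi\in C_0^\infty(\Omega,\R^N)$ with $[\nabla\varphi]_{\mathrm{BMO}(\Omega)}\leq\delta$ once $\delta$ is small enough.

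Two ingredients feed into the estimate of $E(\varphi)$. First, the uniform continuity of $F_{zz}$ on the compact set $\overline\Omega\times B(0,\|\nabla\overline{u}\|_{\LL^\infty}+1)$ produces a modulus $\omega$ with $\omega(0^+)=0$ such that $|F_{zz}(x,\nabla\overline{u}+y)-F_{zz}(x,\nabla\overline{u})|\leq\omega(|y|)$ for $|y|\leq 1$, while $p$-growth from $(\overline{\mathrm{H}}1)$ yields $|F_{zz}(x,z)|\leq c(1+|z|^{(p-2)_+})$ for large $|z|$. Second --- and this is what goes beyond the $\W^{1,\infty}$ variant established in \cite{Kristensen} --- the inclusion $\varphi\in C_0^\infty(\Omega,\R^N)$ forces $\int_\Omega\nabla\varphi\dx=0$ by the divergence theorem, so the John--Nirenberg inequality on $\Omega$ yields $\|\nabla\varphi\|_{\LL^q(\Omega)}\leq C(q)\delta$ for every $q<\infty$ and, more sharply, by interpolating $\LL^2$ against $\mathrm{BMO}$,
\[
\|\nabla\varphi\|_{\LL^q(\Omega)}\leq C(q)\|\nabla\varphi\|_{\LL^2(\Omega)}^{2/q}\,\delta^{1-2/q}\qquad (q>2),
\]
possibly with a harmless logarithmic correction.

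Split $E(\varphi)$ at the level $\eta:=\sqrt{\delta}$. On $\{|\nabla\varphi|\leq\eta\}$ the integrand is at most $\omega(\sqrt{\delta})|\nabla\varphi|^2$, contributing $\omega(\sqrt{\delta})\int_\Omega|\nabla\varphi|^2\dx$. On $\{|\nabla\varphi|>\eta\}$, the trivial estimate $|\nabla\varphi|^2\leq\eta^{-\alpha}|\nabla\varphi|^{2+\alpha}$ combined with the sharp interpolation at $q=2+\alpha$ (for small $\alpha>0$) bounds this part by $C\delta^{\alpha/2}\int_\Omega|\nabla\varphi|^2\dx$. When $p>2$ there is an additional superquadratic piece handled by $\int_\Omega|\nabla\varphi|^p\dx\leq C\delta^{p-2}\int_\Omega|\nabla\varphi|^2\dx$, again via the interpolation at $q=p$. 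Summing these contributions,
\[
|E(\varphi)|\leq\bigl(\omega(\sqrt{\delta})+C\delta^{\alpha/2}+C\delta^{p-2}\bigr)\int_\Omega|\nabla\varphi|^2\dx,
\]
which lies below $\tfrac{c_3}{2}\int_\Omega|\nabla\varphi|^2\dx$ for $\delta$ small.

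The main obstacle is the sharp interpolation between $\LL^2$ and $\mathrm{BMO}$: a naive application of John--Nirenberg yields only $\|\nabla\varphi\|_{\LL^q(\Omega)}\leq C\delta$ and hence the bound $|E(\varphi)|\leq C\delta^2$, which is insufficient in the regime where $\int_\Omega|\nabla\varphi|^2\dx$ is much smaller than $\delta^2$. Capturing the correct $\LL^2$-dependence crucially exploits that $\nabla\varphi$ has zero mean on $\Omega$ and is what drives the self-absorbing argument; an alternative, equivalent route is a contradiction-and-normalization argument in which $\psi_k:=\varphi_k/\|\nabla\varphi_k\|_{\LL^2}$ is extracted from a putative counterexample, the same John--Nirenberg bounds force $(|\nabla\psi_k|^2)$ to be equi-integrable, and $E(\varphi_k)/\|\nabla\varphi_k\|_{\LL^2}^2\to 0$ by Vitali's theorem, contradicting (\ref{secvarposVMO}).
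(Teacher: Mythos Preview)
Your direct argument is correct and close in spirit to the paper's, but the packaging differs. Both proofs start from the same second-order Taylor expansion and reduce matters to showing that the remainder is bounded by a small multiple of $\int_\Omega|\nabla\varphi|^2\dx$. The paper, however, does \emph{not} split at a $\delta$-dependent level nor invoke the $\LL^2$--$\mathrm{BMO}$ interpolation inequality explicitly. Instead it extends $\nabla\varphi$ by zero to $\R^n$, constructs a modulus $\omega$ with the doubling property $\sup_{t>0}\omega(2t)/\omega(t)<\infty$ and $\omega\equiv 1$ on $[1,\infty)$, and applies Orlicz-type Hardy--Littlewood and Fefferman--Stein maximal inequalities (Lemmata~\ref{LemmaHLMaxVMO} and~\ref{lemmaFSMaxVMO}) with $\Phi(t)=t^2\omega(t)$ (and $\Phi(t)=t^p\omega(t)$ when $p>2$) to obtain directly
\[
\int_\Omega \omega(|\nabla\varphi|)\,|\nabla\varphi|^2\dx \;\leq\; c\int_{\R^n}\omega\bigl(c(\nabla\varphi)^{\#}\bigr)\,|(\nabla\varphi)^{\#}|^2\dx \;\leq\; c\,\omega(c\delta)\int_\Omega|\nabla\varphi|^2\dx,
\]
using $\|(\nabla\varphi)^{\#}\|_{\LL^\infty}=[\nabla\varphi]_{\mathrm{BMO}}\leq\delta$ together with $\|(\nabla\varphi)^{\#}\|_{\LL^2}\leq c\|\nabla\varphi\|_{\LL^2}$. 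Your interpolation estimate $\|\nabla\varphi\|_{\LL^q}^q\leq C\|\nabla\varphi\|_{\LL^2}^2\,\delta^{\,q-2}$ is in fact a special case of this maximal-function scheme (take $\Phi(t)=t^q$ and estimate $\|(\nabla\varphi)^{\#}\|_{\LL^q}^q\leq\delta^{q-2}\|(\nabla\varphi)^{\#}\|_{\LL^2}^2$), so the underlying harmonic analysis is the same Fefferman--Stein inequality. What your route buys is that it avoids constructing a doubling modulus and uses only the standard $\LL^q$ interpolation; what the paper's route buys is that it dispenses with the artificial threshold $\eta=\sqrt\delta$ and handles the $\omega$-weighted integrand in a single stroke. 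A minor caveat: your alternative contradiction-and-normalization argument at the end tacitly needs the very interpolation bound you have already established to secure equi-integrability of $(|\nabla\psi_k|^2)$, so it is not genuinely independent of the direct estimate.
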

In addition to the $x$-dependence, we remark that our statement differs from Theorem 6.1 in \cite{Kristensen} in that, in their result, the parameter $\delta$ that gives the local minimality, depends on a given constant $M>0$ for which the variations $\varphi$ are required to satisfy $\|\nabla\varphi\|_{\LL^\infty}\leq M$. By adapting the truncation technique from Acerbi {\&} Fusco \cite{AcFus}, we have been able to remove this additional restriction.  

For the proof of Theorem \ref{morethanWLMVMO} we will require the following definition and the subsequent lemmata, that generalize the Hardy-Littlewood-Fefferman-Stein maximal inequality to Orlicz spaces.
\begin{definition}
Let $f\colon\R^n\rightarrow\R^{N\times n}$ be an integrable map. We define the \textbf{Hardy-Littlewood maximal function} by
\begin{equation*}
f^\star(x):=\underset{B(y,r)\ni x}{\sup}\,\underset{B(y,r)}{\mint}|f(y)|\dy,
\end{equation*}
where the supremum is taken over all balls $B(y,r)\subseteq\R^n$ containing $x$. Similarly, the \textbf{Fefferman-Stein maximal function} is given by
\begin{equation*}
f^{\#}(x):=\underset{B(y,r)\ni x}{\sup}\,\underset{B(y,r)}{\mint}|f(y)-(f)_{y,r}|\dy.
\end{equation*}
\end{definition}
A useful generalization of the Hardy-Littlewood maximal inequality is the following. 
\begin{lemma}\label{LemmaHLMaxVMO}
Let $\Phi\colon[0,\infty)\rightarrow[0,\infty)$ be a continuously increasing function with $\Phi(0)=0$. Assume, in addition, that $\Phi(t)=t^p A(t)$ for some $p>1$ and some increasing function $A\colon[0,\infty)\rightarrow[0,\infty)$.  Then, there exists a constant $\gamma=\gamma(n,p)$ such that
\begin{equation}
\underset{\R^n}{\int}\Phi(|f|)\dx\leq \underset{\R^n}{\int}\Phi(f^{\star})\dx \leq\gamma\underset{\R^n}{\int}\Phi(2|f|)\dx
\end{equation}
for all $f\in \LL^1(\R^n,\R^{N\times n})$.
\end{lemma}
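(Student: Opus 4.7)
The plan is to prove the two inequalities separately. The left one is free: by Lebesgue's differentiation theorem, $|f(x)|\le f^\star(x)$ for almost every $x\in\R^n$, so the monotonicity of $\Phi$ yields $\Phi(|f(x)|)\le \Phi(f^\star(x))$ pointwise a.e., and integration closes this half.

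For the right inequality I would deploy three standard ingredients in combination: the weak-type $(1,1)$ estimate for the Hardy--Littlewood maximal operator, the layer-cake formula with respect to the Lebesgue--Stieltjes measure $d\Phi$, and a tailored one-dimensional estimate exploiting the structure $\Phi(t)=t^pA(t)$. The first step is the Calder\'on--Zygmund-type truncation: at each level $t>0$ split $f=f_1+f_2$ with $f_1:=f\mathbbm{1}_{\{|f|>t/2\}}$ and $f_2:=f-f_1$. Then $f_2^\star\le t/2$ pointwise, so $\{f^\star>t\}\subseteq\{f_1^\star>t/2\}$, and the classical weak-$(1,1)$ inequality applied to the $\LL^1$ function $f_1$ gives the familiar bound
\begin{equation*}
\mathcal{L}^n(\{f^\star>t\})\le \frac{C(n)}{t}\int_{\{|f|>t/2\}}|f|\dy.
\end{equation*}

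Next I would write $\int \Phi(f^\star)\dx=\int_0^\infty \mathcal{L}^n(\{f^\star>t\})\,d\Phi(t)$, which is valid since $\Phi$ is continuous, increasing, and vanishes at $0$, insert the weak-type bound above, and swap integrals by Fubini--Tonelli to reduce the problem to the scalar inequality
\begin{equation*}
\int_0^s \frac{d\Phi(t)}{t}\le \frac{p}{p-1}\cdot\frac{\Phi(s)}{s}, \qquad s>0.
\end{equation*}
This is the only step that genuinely uses the structural assumption. Stieltjes integration by parts reduces it to bounding $\int_0^s \Phi(t)/t^2\,dt=\int_0^s t^{p-2}A(t)\dt$, which is dominated by $A(s)\cdot s^{p-1}/(p-1)=\Phi(s)/((p-1)s)$ thanks to the monotonicity of $A$; the boundary term at zero vanishes since $\Phi(t)/t\le t^{p-1}A(s)\to 0$ as $t\to 0^+$. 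Substituting $s=2|f(x)|$ in the resulting estimate and integrating over $\R^n$ then yields the desired bound with $\gamma(n,p)=C(n)p/(2(p-1))$.

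I do not foresee a serious obstacle; the only point requiring a moment of care is ensuring the Stieltjes integration by parts is rigorous for a merely continuously increasing $\Phi$, which it is once one observes that $\Phi$ induces a $\sigma$-finite Borel measure on $[0,\infty)$ and that the boundary behaviour near zero is controlled precisely by the factor $t^p$ in the factorization. Everything else is a bookkeeping application of Fubini and the classical weak-type estimate.
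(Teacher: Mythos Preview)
Your proposal is correct. The first inequality is handled exactly as the paper does (Lebesgue differentiation plus monotonicity of $\Phi$); for the second inequality the paper gives no argument at all but simply refers the reader to \cite[Lemma 5.1]{GrIwaMos}, and the layer-cake plus weak-$(1,1)$ plus Stieltjes integration-by-parts computation you outline is precisely the standard route (and essentially what one finds in that reference).
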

The proof of the first inequality in this lemma follows from the fact that $\Phi$ is increasing and from Lebesgue Differentiation Theorem, which implies that $|f(x)|\leq f^\star(x)$ for almost every $x\in\R^n$. For a proof of the second inequality we refer the reader to \cite[Lemma 5.1]{GrIwaMos}.
We can relate both notions of maximal functions in the following way.
\begin{lemma}\label{lemmaFSMaxVMO}
Let $\Phi\colon[0,\infty)\rightarrow[0,\infty)$ be a continuously increasing function with $\Phi(0)=0$. Let $\varepsilon>0$ and $f\in  \LL^1(\R^n,\R^{N\times n})$. Then, 
\begin{equation}
\label{eqlemmaFSMaxVMO}
\underset{\R^n}{\int}\Phi(f^\star)\dx\leq \frac{5^n}{\varepsilon}\underset{\R^n}{\int}\Phi\left(\frac{f^\#}{\varepsilon}\right)\dx+2\cdot 5^{3n}\varepsilon\underset{\R^n}{\int}\Phi(5^n2^{n+1}f^\star)\dx.
\end{equation}
If, in addition, we have that
\begin{equation*}
\underset{t>0}{\sup}\frac{\Phi(2t)}{\Phi(t)}<\infty,
\end{equation*}
we can further conclude that there is a constant $\gamma_1=\gamma_1(n)$ such that
\begin{equation}
\label{eqlemmaFSMax2VMO}
\underset{\R^n}{\int}\Phi(f^\star)\dx\leq \gamma_1\underset{\R^n}{\int}\Phi(f^\#) \dx
\end{equation}
whenever $f\in \LL^1(\R^n,\R^{N\times n})$ is such that $\underset{\R^n}{\int}\Phi(f^\star)\dx<\infty$. 
\end{lemma}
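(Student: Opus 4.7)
The plan is to derive (\ref{eqlemmaFSMaxVMO}) from a Fefferman--Stein good-$\lambda$ inequality combined with the layer-cake representation, and then to obtain (\ref{eqlemmaFSMax2VMO}) by absorbing the $f^\star$-term on the right-hand side, using the doubling assumption on $\Phi$ together with the finiteness hypothesis $\int\Phi(f^\star)\dx<\infty$.

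First I would establish the good-$\lambda$ inequality
\begin{equation*}
\bigl|\{f^\star > 5^n 2^{n+1}\lambda,\ f^\# \leq \varepsilon\lambda\}\bigr|
\leq c_n\,\varepsilon\,\bigl|\{f^\star > \lambda\}\bigr|,
\end{equation*}
valid for every $\lambda>0$ and $\varepsilon>0$. The argument is a Vitali-covering one: for each $x$ in the open set $\{f^\star>\lambda\}$ there is a ball $B_x\ni x$ with $(|f|)_{B_x}>\lambda$, and the Vitali $5$-covering lemma extracts a disjoint subfamily $\{B_i\}$ whose fivefold dilates already cover $\{f^\star>\lambda\}$; this is the source of the factor $5^n$. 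Working ball by ball, I would compare the mean of $|f|$ on $B_i$ with its mean oscillation: at any point $y\in B_i$ at which $f^\#(y)\leq\varepsilon\lambda$, the average $|(f)_{B_i}|$ is forced to be comparable to $\lambda$, and the set of points in $B_i$ where $f^\star$ strictly exceeds $5^n 2^{n+1}\lambda$ must then have measure at most a constant times $\varepsilon|B_i|$. Summing over $i$ and using the Vitali bound $|\{f^\star>\lambda\}|\leq 5^n\sum_i|B_i|$ delivers the claim, and the intermediate factors match the ones appearing in the statement.

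Next I would invoke the layer-cake identity
\begin{equation*}
\int_{\R^n}\Phi(f^\star)\dx = \int_0^\infty \bigl|\{f^\star>t\}\bigr|\,d\Phi(t).
\end{equation*}
Splitting the level set at height $t = 5^n 2^{n+1}\lambda$ into its two pieces determined by whether $f^\# \leq \varepsilon\lambda$, inserting the good-$\lambda$ bound on the first piece and the trivial Chebyshev-type measure estimate for $\{f^\# > \varepsilon\lambda\}$ on the second, followed by the natural rescalings in $\lambda$, produces (\ref{eqlemmaFSMaxVMO}) with the stated constants. For (\ref{eqlemmaFSMax2VMO}), the doubling hypothesis iterates to a bound $\Phi(Ct)\leq C^\alpha\Phi(t)$ for all $C\geq 1$, with $\alpha = \log_2\sup_{t>0}\Phi(2t)/\Phi(t)$; choosing $\varepsilon>0$ small enough that $2\cdot 5^{3n}\varepsilon\cdot(5^n 2^{n+1})^\alpha<\tfrac12$, the second term of (\ref{eqlemmaFSMaxVMO}) can be absorbed into the left-hand side, the finiteness hypothesis being exactly what legitimises the absorption. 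A last application of doubling replaces $\Phi(f^\#/\varepsilon)$ by a constant multiple of $\Phi(f^\#)$, yielding the constant $\gamma_1(n)$.

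The main obstacle I expect is the careful bookkeeping of the combinatorial constants emerging from the Vitali covering, the comparison of $|f|$ with its average over $B_i$, and the rescaling in the layer-cake step, since these factors compound multiplicatively and must match the precise constants $5^n/\varepsilon$, $2\cdot 5^{3n}\varepsilon$ and $5^n 2^{n+1}$ in the statement. Once the good-$\lambda$ inequality is in place with the correct constants, the absorption and the passage from $\Phi(f^\#/\varepsilon)$ to $\Phi(f^\#)$ are routine.
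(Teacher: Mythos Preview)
The paper itself does not supply a proof of (\ref{eqlemmaFSMaxVMO}); it refers to \cite{Kristensen} and then remarks that (\ref{eqlemmaFSMax2VMO}) follows easily from (\ref{eqlemmaFSMaxVMO}) under the doubling assumption. Your plan---a Fefferman--Stein good-$\lambda$ inequality via a Vitali covering, integrated against $d\Phi$ by the layer-cake formula, followed by absorption of the $f^\star$-term using doubling and the finiteness hypothesis---is precisely the standard argument that the cited reference carries out, and your treatment of (\ref{eqlemmaFSMax2VMO}) is exactly what the paper means by ``follows easily''.

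One small observation worth recording: your absorption step correctly reveals that $\gamma_1$ must depend on the doubling constant $\sup_{t>0}\Phi(2t)/\Phi(t)$ as well as on $n$, since both the choice of $\varepsilon$ and the final conversion of $\Phi(f^\#/\varepsilon)$ into a multiple of $\Phi(f^\#)$ use it. The claim $\gamma_1=\gamma_1(n)$ in the lemma is thus a mild imprecision, immaterial in the paper's applications where $\Phi$ is fixed.
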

The proof of (\ref{eqlemmaFSMaxVMO}) can be found, for example, in \cite{Kristensen}. Inequality (\ref{eqlemmaFSMax2VMO}) follows easily from (\ref{eqlemmaFSMaxVMO}) under the given extra assumptions.

We can now proceed with the proof of the theorem.
\\
\begin{proof}[Proof of Theorem \ref{morethanWLMVMO}]
Let $\omega$ be a modulus of continuity for $F_{zz}$ on the set
\begin{equation*}
\overline{\Omega}\times\{\xi\in\R^{N\times n}\st |\xi|\leq 1+\|\nabla\overline{u}\|_\infty\}.
\end{equation*}
We extend $\omega$ to cover all $\overline{\Omega}\times\R^{N\times n}$, so that it has the following properties:
\begin{itemize}
\item $\omega:[0,\infty)\rightarrow[0,\infty)$;
\item $\omega$ is continuous and increasing;
\item $\omega(0)=0$ and $\omega(t)=1$ for every $t\geq 1$;
\item $\underset{t>0}{\sup}\frac{\omega(2t)}{\omega(t)}<\infty$ and
\item $|F_{zz}(x,\xi)-F_{zz}(x,\eta)|\leq c\omega(|\xi-\eta|)$ for some constant $c=c(\|\nabla u\|_{\LL^\infty})>0$,  every  $|\xi|,|\eta|\leq 1+\|\nabla\overline{u}\|_\infty$, and every $x\in\overline{\Omega}$.
\end{itemize}
Now, if $\varphi\in C_0^\infty(\Omega,\R^N)$, then by Taylor's formula, (\ref{uextremalVMO}), and (\ref{secvarposVMO}), we have
\begin{align}
&\underset{\Omega}{\int}\left(F(\cdot,\nabla\overline{u}+\nabla\varphi)-F(\cdot,\nabla\overline{u})\right)\dx\notag\\
=&\underset{\Omega}{\int}\left(F(\cdot,\nabla\overline{u}+\nabla\varphi)-F(\cdot,\nabla\overline{u})-\left\langle F_z(\cdot,\nabla\overline{u}),\nabla\varphi\right\rangle\right)\dx\notag\\
=&\underset{\Omega}{\int}\left(F(\cdot,\nabla\overline{u}+\nabla\varphi)-F(\cdot,\nabla\overline{u})-\left\langle F_z(\cdot,\nabla\overline{u}),\nabla\varphi\right\rangle-\frac{1}{2}F_{zz}(\cdot,\nabla\overline{u})[\nabla\varphi,\nabla\varphi]\right)\mathbbm{1}_{\{\|\nabla\varphi|> 1\}}\dx\notag\\
&+\underset{\Omega}{\int}\int_0^1(1-t)\left(F_{zz}(\cdot,\nabla\overline{u}+t\nabla\varphi)-F_{zz}(\cdot,\nabla\overline{u})\right)[\nabla\varphi,\nabla\varphi]\mathbbm{1}_{\{\|\nabla\varphi|\leq 1\}}\dt\dx\notag\\
&+\frac{1}{2}\underset{\Omega}{\int}F_{zz}(\cdot,\nabla\overline{u})[\nabla\varphi,\nabla\varphi]\dx\notag\\
\geq &\frac{c_3}{2} \underset{\Omega}{\int}|\nabla\varphi|^2\dx-c\underset{\Omega}{\int}\left(\left(1+|\nabla\overline{u}|^{p-1}+|\nabla\varphi|^{p-1}\right)|\nabla\varphi|+|\nabla\varphi|+|\nabla\varphi|^2\right)\mathbbm{1}_{\{\|\nabla\varphi|> 1\}}\dx\notag\\
&-c\underset{\Omega}{\int}\omega(|\nabla\varphi|)|\nabla\varphi|^2\mathbbm{1}_{\{\|\nabla\varphi|\leq 1\}}\dx\notag\\
\geq &\frac{c_3}{2} \underset{\Omega}{\int}|\nabla\varphi|^2\dx-c\underset{\Omega}{\int}\left(|\nabla\varphi|^2+|\nabla\varphi|^p\right)\mathbbm{1}_{\{\|\nabla\varphi|> 1\}}\dx-c\underset{\Omega}{\int}\omega(|\nabla\varphi|)|\nabla\varphi|^2\mathbbm{1}_{\{\|\nabla\varphi|\leq 1\}}\dx,\label{wlmtheoVMO}
\end{align}
where $c=c(\|\nabla\overline{u}\|_\infty,n,p)$ and the last inequality follows from the fact that $ab^{p-1}\leq a^p+b^p$ for $a,b>0$. 
\\
We now consider two different cases. 
\\
\textit{Case 1.} If $1<p\leq 2$, it follows from the above chain of inequalities that 
\begin{align}
&\underset{\Omega}{\int}\left(F(\cdot,\nabla\overline{u}+\nabla\varphi)-F(\cdot,\nabla\overline{u})\right)\dx\notag\\
\geq & \frac{c_3}{2} \underset{\Omega}{\int}|\nabla\varphi|^2\dx-c\underset{\Omega}{\int}|\nabla\varphi|^2\mathbbm{1}_{\{\|\nabla\varphi|> 1\}}\dx-c\underset{\Omega}{\int}\omega(|\nabla\varphi|)|\nabla\varphi|^2\mathbbm{1}_{\{\|\nabla\varphi|\leq 1\}}\dx\notag\\
\geq &  \frac{c_3}{2} \underset{\Omega}{\int}|\nabla\varphi|^2\dx-c\underset{\Omega}{\int}\omega(|\nabla\varphi|)|\nabla\varphi|^2\dx\label{mintheosubquadVMO}.
\end{align}
Extending $\varphi$ by $0$ outside of $\Omega$, we see $\nabla\varphi$ as a map defined on $\R^n$. 

Then, applying Lemmata \ref{LemmaHLMaxVMO} and \ref{lemmaFSMaxVMO}  with $\Phi(t)=t^2\omega(t)$, we find, for a new constant $c>0$, that
\begin{align*}
&\underset{\Omega}{\int}\left(F(\cdot,\nabla\overline{u}+\nabla\varphi)-F(\cdot,\nabla\overline{u})\right)\dx\notag\\
\geq & \frac{c_3}{2} \underset{\Omega}{\int}|\nabla\varphi|^2\dx-c\underset{\Omega}{\int}\omega(c|(\nabla\varphi)^\#|)|(\nabla\varphi)^\#|^2\dx.\label{almostminthVMO}
\end{align*}
It is easy to observe that
\begin{equation}\label{phicatleqphistar}
(\nabla\varphi)^\#\leq 2(\nabla\varphi)^\star. 
\end{equation}
This, and the Hardy-Littlewood-Wiener maximal inequality, imply that 
\begin{equation*}
\underset{\R^n}{\int}|(\nabla\varphi)^\#|^2\dx\leq 4\underset{\R^n}{\int}|(\nabla\varphi)^\star|^2\dx\leq c\underset{\R^n}{\int}|\nabla\varphi|^2\dx.
\end{equation*}
Putting this and (\ref{mintheosubquadVMO}) together, we conclude that 
\begin{equation}
\underset{\Omega}{\int}\left(F(\cdot,\nabla\overline{u}+\nabla\varphi)-F(\cdot,\nabla\overline{u})\right)\dx
\geq   \underset{\Omega}{\int}\left(\frac{c_3\,c}{2}-c\,\omega(c|(\nabla\varphi)^\#|)\right)|(\nabla\varphi)^\#|^2\dx.
\end{equation}
By taking $\delta>0$ small enough, the right hand side of the above expression will be non-negative when $[\nabla\varphi]_{\mathrm{BMO}}=\|(\nabla\varphi)^\#\|_\infty\leq \delta$. This concludes the proof for the case $1<p\leq 2$.
\\
\\
\textit{Case 2.} If, on the other hand, $2<p<\infty$, from (\ref{wlmtheoVMO}) we infer that
\begin{align}
&\underset{\Omega}{\int}\left(F(\cdot,\nabla\overline{u}+\nabla\varphi)-F(\cdot,\nabla\overline{u})\right)\dx\notag\\
\geq& \frac{c_3}{2} \underset{\Omega}{\int}|\nabla\varphi|^2\dx-c\underset{\Omega}{\int}\left(|\nabla\varphi|^2+|\nabla\varphi|^p\right)\dx-c\underset{\Omega}{\int}\omega(|\nabla\varphi|)|\nabla\varphi|^2\dx\notag\\
\geq& \frac{c_3}{2} \underset{\Omega}{\int}|\nabla\varphi|^2\dx-c\underset{\Omega}{\int}\omega(|\nabla\varphi|)\left(|\nabla\varphi|^2+|\nabla\varphi|^p\right)\dx.\label{locminp2VMO}
\end{align}
We extend $\varphi$ by defining it like $0$ outside of $\Omega$ and, for a $\delta>0$ still to be specified, we assume that $\|(\nabla\varphi)^\#\|_\infty<\delta<1$. Then, by Lemmata \ref{LemmaHLMaxVMO} and \ref{lemmaFSMaxVMO} applied with $\Phi(t)=t^p\omega(\tilde{c}t)$ and $\Phi(t)=t^2\omega(\tilde{c}t)$ in both directions, we use again (\ref{phicatleqphistar}) to obtain that, for different constants $\tilde{c}=\tilde{c}(n)$,
\begin{align}
\underset{\R^n}{\int}|\nabla\varphi|^p\omega(\nabla\varphi) \dx\leq & \,c\underset{\R^n}{\int}|(\nabla\varphi)^\star|^p\omega((\nabla\varphi)^\star)\dx\leq \, c \underset{\R^n}{\int}|(\nabla\varphi)^\#|^p\omega(\tilde{c}(\nabla\varphi)^\#)\dx\notag\\
\leq &  \,c\underset{\R^n}{\int}|(\nabla\varphi)^\#|^2\omega(\tilde{c}(\nabla\varphi)^\#)\dx\leq \,  c\underset{\R^n}{\int}|(\nabla\varphi)^\star|^2\omega(\tilde{c}(\nabla\varphi)^\star)\dx\notag\\
\leq & \, c\underset{\R^n}{\int}|\nabla\varphi|^2\omega(\tilde{c}(\nabla\varphi))\dx.\label{usingmaxVMO}
\end{align}
It is for the third inequality above that we are using the assumption $\|(\nabla\varphi)^\#\|_\infty<\delta<1$.
The estimates obtained in (\ref{locminp2VMO}) and (\ref{usingmaxVMO}) lead to 
\begin{equation*}
\underset{\Omega}{\int}\left(F(\cdot,\nabla\overline{u}+\nabla\varphi)-F(\cdot,\nabla\overline{u})\right)\dx\geq  \frac{c_3}{2} \underset{\Omega}{\int}|\nabla\varphi|^2\dx-c\underset{\Omega}{\int}\omega(\tilde{c}|\nabla\varphi|)|\nabla\varphi|^2\dx.
\end{equation*}
We are now in the same situation as in (\ref{mintheosubquadVMO}) and we can conclude the proof in the same way that we did for $1<p\leq 2$. 
\end{proof}

\section{Regularity up to the boundary for a class of local minimizers}\label{SectBdryReg}

The main result in this section concerns regularity up to the boundary for two certain classes of local minimizers. On the one hand, it provides a partial boundary regularity result for strong local minimizers, extending the work of Kristensen-Taheri in \cite{Kristensen}, where interior partial regularity for strong local minimizers was shown. Their technique, that we adapt here four our purposes, relies on the blow-up method and on a remarkable modification to Evans' proof on interior partial regularity for the case of absolute minimizers  \cite{Evanspr}. 

On the other hand, the regularity up to the boundary that we prove here also holds for weak local minimizers with VMO derivative. In this case, full (and not partial) regularity is achieved. This immediately enables us to establish a connection with Weierstrass' sufficiency problem in the vectorial case. We discuss this in detail in Section \ref{SectGralizedSuff}.

In addition to $(\overline{{\mathrm{H}}}0)$-$(\overline{{\mathrm{H}}}2)$, from now on we also consider the following H\"older continuity assumption, which is standard in the treatment of interior and boundary regularity problems (see \cite{KristMingBdryreg}).

\begin{itemize}
\item[(${\mathrm{H}}$C)] $F$ and $F_z$ satisfy the following H\"older continuity properties: for some fixed $\alpha \in (0,1)$ and for every $x,y\in\overline{\Omega}$, $z\in\R^{N\times n}$, 
\begin{equation*}
\left\{
\begin{array}{l}
|F(x,z)-F(y,z)|\leq c_1|x-y|^\alpha(1+|z|^p);\\
|F_z(x,z)-F_z(y,z)|\leq c_1|x-y|^\alpha(1+|z|^{p-1}).
\end{array}
\right.
\end{equation*}
\end{itemize}

\begin{theorem}\label{BdryRegularityTheorem}
Let $\Omega$ be a bounded domain of class $C^{1,\alpha}$ and $g\in C^{1,\alpha}(\overline{\Omega},\R^N)$ for some $\alpha\in(0,1)$. Assume that  $F:\overline{\Omega}\times\R^{N\times n}\rightarrow\R$ is an integrand satisfying $(\overline{\mathrm{H}}0$)-($\overline{\mathrm{H}}2)$  and $(\mathrm{{H}C})$ for some $p\in [2,\infty)$. Suppose either one of the following:
\begin{itemize}
\item[(a)] there is a $q\in [1,\infty)$ such that $\overline{u}\in \W^{1,p}_g(\Omega,\R^N)\cap\W^{1,q}(\Omega,\R^{N\times n})$ is a $\W^{1,q}$-local minimizer or
\item[(b)] $\overline{u}\in \W^{1,\infty}_g(\Omega,\R^{N\times n})$ is a $\mathrm{BMO}$-local minimizer and $\nabla\overline{u}\in\mathrm{VMO}_{\nabla g}(\Omega,\R^{N\times n})$.
\end{itemize}
Then, $\overline{u}\in C^{1,\beta}(\Omega_0\cup \Sigma_0,\R^{N\times n})$ for every $\beta\in[0,\alpha)$, 
where 
\begin{equation*}\Omega_0:=\left\{x_0\in\Omega\st \underset{r\rightarrow 0}{\limsup} |(\nabla\overline{u})_{x_0,r}| < \infty \mbox {\,\,\, and \,\,\,} \underset{r\rightarrow 0}{\liminf} \underset{\Omega(x_0,r)}{\mint}|\nabla\overline{u}-(\nabla\overline{u})_{x_0,r}|^p\dx=0\right\},
\end{equation*}
\begin{equation*}\Sigma_0:=\left\{x_0\in\partial\Omega\st \underset{r\rightarrow 0}{\limsup} |(\nabla_\mathrm{n}\overline{u})_{x_0,r}| < \infty \mbox { \,\,\,and\,\,\, } \underset{r\rightarrow 0}{\liminf} \underset{\Omega(x_0,r)}{\mint}|\nabla\overline{u}-(\nabla\overline{u})_{x_0,r}|^p\dx=0\right\}.
\end{equation*}
In particular, under the assumptions in (b), $\overline{u}\in C^{1,\beta}(\overline{\Omega},\R^{N\times n})$ for  $\beta \in [0,\alpha)$.
\end{theorem}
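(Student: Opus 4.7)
The plan is to prove the result by a blow-up argument adapted from \cite{Kristensen,Evanspr}, carried out simultaneously at interior and boundary points, with the main novelty consisting in handling case (a) at boundary points where $\overline{u}$ is a merely $\W^{1,q}$-local minimizer and not an absolute minimizer. The interior regularity part (for $x_0\in\Omega_0$) can be obtained essentially as in \cite[Theorem 4.1]{Kristensen}, so I will concentrate on the boundary part; the interior argument will appear as a simplified version of the boundary one after replacing half-balls with balls and removing the boundary data. Fix $x_0\in\Sigma_0$. Using the $C^{1,\alpha}$ regularity of $\partial\Omega$ and of $g$, I will straighten the boundary in a neighbourhood of $x_0$ by a $C^{1,\alpha}$ diffeomorphism and subtract off the boundary datum, so that the problem becomes that of a $\W^{1,q}$- (resp.\ BMO-) local minimizer $v$ of a new integrand $\tilde F$ on a half-ball $B^+(0,R)$, with $v=0$ on the flat part $B(0,R)\cap\{x_n=0\}$. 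The new integrand still satisfies $(\overline{\mathrm{H}}0)$--$(\overline{\mathrm{H}}2)$ and $(\mathrm{HC})$, and local minimality is preserved because the diffeomorphism is bi-Lipschitz.

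Next I will introduce the hybrid excess functional
\begin{equation*}
\Phi(x_0,r):=\underset{\Omega(x_0,r)}{\mint}\bigl|V(\nabla v-(\nabla v)_{x_0,r})\bigr|^2\dx+r^{2\alpha}\bigl(1+|(\nabla v)_{x_0,r}|\bigr)^p,
\end{equation*}
and prove the basic excess decay estimate: for every $M>0$ and every $\beta\in[0,\alpha)$ there exist $\tau=\tau(M,\beta)\in(0,1/2)$ and $\eps=\eps(M,\beta)>0$ such that, whenever $|(\nabla_{\mathrm n} v)_{x_0,r}|\le M$ and $\Phi(x_0,r)<\eps$, one has $\Phi(x_0,\tau r)\le \tau^{2\beta}\Phi(x_0,r)$. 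Arguing by contradiction, I blow up along a sequence of centres, radii $r_k\downarrow 0$ and affine maps $\ell_k(x):=a_k+A_k x$, with $A_k=(\nabla v)_{x_0,r_k}$ and $\lambda_k^2:=\Phi(x_0,r_k)\to 0$, setting
\begin{equation*}
w_k(y):=\frac{v(x_0+r_k y)-\ell_k(x_0+r_k y)}{\lambda_k r_k}, \qquad y\in B^+(0,1).
\end{equation*}
Boundedness of $|A_k|$ lets me assume $A_k\to A$ along a subsequence; by construction $\fint_{B^+(0,1)}|V(\lambda_k\nabla w_k)|^2\dy\le c$, so $w_k\rightharpoonup w$ in $\W^{1,2}$, with $w=0$ on the flat part, and standard computations show that $w$ solves the constant-coefficient Legendre--Hadamard elliptic system
\begin{equation*}
\int_{B^+(0,1)}F_{zz}(x_0,A)[\nabla w,\nabla\varphi]\dy=0\qquad \forall\,\varphi\in C_c^\infty(B^+(0,1)\cup(B(0,1)\cap\{y_n=0\}),\R^N).
\end{equation*}
Solutions of such systems with zero Dirichlet datum on a flat piece of the boundary enjoy $C^{1,\alpha}$ interior-plus-up-to-the-flat-part estimates, which for any $\beta<\alpha$ yield $\fint_{B^+(0,\tau)}|\nabla w-(\nabla w)_{0,\tau}|^2\dy\le c_0\tau^{2\alpha}$. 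Choosing $\tau$ small enough eventually contradicts the assumed failure of decay, provided the excess in the blow-up limit is controlled by the excess of $w$.

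The hard part is precisely this last passage, namely showing strong convergence $\nabla w_k\to\nabla w$ in $\LL^2_{\mathrm{loc}}$, because only then can the $V$-excess at scale $\tau r_k$ be estimated by that of $w$. This is where the local-minimality assumption replaces the absolute-minimality used by \cite{BeckVarInt}; the obstruction is that test variations of $w_k$ must, after scaling back, remain admissible small variations of $v$ in the $\W^{1,q}$- (resp.\ BMO-) sense. I will follow the Acerbi--Fusco--Kristensen--Taheri truncation device: approximate $w_k-w$ by a Lipschitz function $\zeta_k$ via the maximal function of the gradient, so that $\zeta_k$ differs from $w_k-w$ only on a small set and $\|\nabla\zeta_k\|_\infty\le L_k$ with $\lambda_k L_k\to 0$; then $v+\lambda_k r_k\,\zeta_k(\cdot/r_k)$ is an admissible small variation (its $\W^{1,q}$- resp.\ BMO-norm is of order $\lambda_k L_k$), so the local minimality of $v$ applies. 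Combining this with the strong $p$-quasiconvexity $(\overline{\mathrm H}2)$ and the frozen-integrand comparison via $(\mathrm{HC})$ produces a uniform Caccioppoli-type estimate in terms of $V$, from which equi-integrability of $|\nabla w_k|^2$ and the standard argument of \cite[Prop.~5.2]{Kristensen} give strong $\LL^2$-convergence of $\nabla w_k$ on compact subsets of $B^+(0,1)\cup(B(0,1)\cap\{y_n=0\})$. This is the step whose technical execution is the principal difficulty, and which must be redone with extra care for the boundary truncation in order to preserve the vanishing trace on $\{y_n=0\}$.

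Once the excess decay is established, iterating it starting from a single scale at which $\Phi(x_0,r_0)<\eps$ yields, by a now standard Campanato/Morrey argument, $\nabla \overline u\in C^{0,\beta}$ in a neighbourhood of $x_0$ for every $\beta<\alpha$, proving $\overline u\in C^{1,\beta}(\Omega_0\cup\Sigma_0,\R^N)$. The openness of $\Omega_0\cup\Sigma_0$, which is needed to make sense of the statement, is a direct consequence of the smallness condition on $\Phi$ being open with respect to $x_0$ at a fixed small radius. For case (b), the assumption $\nabla\overline u\in\mathrm{VMO}_{\nabla g}$ implies that the mean oscillation $\fint_{\Omega(x_0,r)}|\nabla\overline u-(\nabla\overline u)_{x_0,r}|^p\dx$ tends to zero as $r\to 0$ uniformly in $x_0$, while the BMO bound plus the boundedness of $\overline u$ in $\W^{1,\infty}$ give a uniform bound on $|(\nabla\overline u)_{x_0,r}|$; consequently $\Omega_0=\Omega$ and $\Sigma_0=\partial\Omega$, and the previous argument produces global $C^{1,\beta}$-regularity on $\overline\Omega$.
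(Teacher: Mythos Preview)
Your overall architecture is right and matches the paper: flatten the boundary, subtract $g$, set up a blow-up sequence, show that the weak limit solves a constant-coefficient Legendre--Hadamard system with zero data on the flat part, use the linear decay estimate, and conclude by Campanato iteration. The identification of the limit system and the final conclusion for case (b) via the VMO hypothesis are also correct.

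The genuine gap is in your strong convergence argument. You propose to Lipschitz-truncate $w_k-w$ via the maximal function and feed the truncation back as a variation. This does make the variation admissible in both the $\W^{1,q}$ and the BMO sense (indeed $\|\nabla\zeta_k\|_\infty\le L_k$ with $\lambda_kL_k\to 0$ suffices), but it does \emph{not} produce the Caccioppoli inequality that ``the standard argument of \cite[Prop.~5.2]{Kristensen}'' needs. That measure-theoretic argument requires an inequality of the type
\[
\int_{B^+(x_0,\varsigma r)}\bigl(|\nabla u_j-\nabla a_j|^2+\lambda_j^{p-2}|\nabla u_j-\nabla a_j|^p\bigr)\dx
\le \theta\int_{B^+(x_0,r)}\bigl(\cdots\bigr)+\text{lower order},\qquad \theta<1,
\]
with an \emph{affine} comparison map $a_j$ depending only on the means of $u_j$, not on the limit $w$. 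Your competitor, built from $w_k-w$, compares $u_j$ to a map that equals $w$ on the good set; the resulting minimality inequality and the quasiconvexity inequality (which must be applied at a fixed matrix, not at $\nabla w$) do not combine into this iterative annulus structure. In short, you are mixing two distinct techniques: the Lipschitz truncation route could in principle give a direct energy comparison $\limsup_k\int|\nabla w_k|^2\le\int|\nabla w|^2$, but that is a different argument and would have to be carried out from scratch; it is not what \cite[Prop.~5.2]{Kristensen} does.

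What the paper actually does is closer to the classical cut-off scheme: set $\varphi_j=\rho(u_j-a_j)$, $\psi_j=(1-\rho)(u_j-a_j)$ with $a_j(y)=y_n(\nabla_{\mathrm n}u_j)_{x_0,r}$, apply strong quasiconvexity to $\varphi_j$, and compare $u_j$ with $u_j-\varphi_j=a_j+\psi_j$ via the local minimality. The admissibility check is the delicate point, and it is handled \emph{without} Lipschitz truncation: in case (a) one shows $\|\nabla\varphi_j\|_{\LL^q(B(x_0,r))}\le c(\varsigma)\lambda_j^{-1}r_j^{-n/q}(\int_{B^+(x_j,r_j)}|\nabla\overline u|^q)^{1/q}$, which is small because the support shrinks; in case (b) one needs a separate technical lemma (a BMO estimate for cut-offs of functions vanishing on $\Gamma$) that bounds $[\nabla\varphi_j]_{\mathrm{BMO}}$ in terms of the mean oscillation of $\nabla u_j$, and then the VMO hypothesis closes the estimate. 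After hole-filling this gives the Caccioppoli inequality with $\theta<1$, and the Kristensen--Taheri measure argument goes through unchanged.

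One further point you should incorporate: at boundary points the paper centres the excess at $(\nabla_{\mathrm n}\overline u)_{x_0,r}\otimes\mathrm e_{\mathrm n}$, not at the full average $(\nabla\overline u)_{x_0,r}$, and proves an elementary lemma showing these two excesses are comparable for functions vanishing on the flat part. This is what makes the characterisation of $\Sigma_0$ in terms of Lebesgue-point conditions possible, and it is also used inside the proof to control the Poincar\'e-type terms; your proposal silently uses the full average, which works for the decay estimate itself but does not immediately give the stated description of $\Sigma_0$.
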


\begin{remark}
We recall that not every VMO function is continuous. One of the multiple examples that show this and appear in \cite[Section I.2]{BrezisNirenbergBMOI} is the (bounded) function $f(x)= \sin(\log | \log |x||)$ for $x\in B$, which is of vanishing mean oscillation. 
\end{remark}

We note here that in \cite{Moser} the author states  that, if $u$ is a Lipschitz extremal of a rank one convex integrand of class $C^2$ and such that $\nabla u\in \mathrm{VMO}(\Omega,\R^N)$, then $u\in C^{1,\alpha}(\Omega,\R^N)$ for some $\alpha>0$. However, there is an inconsistency in the proof of \cite[Lemma 3.2]{Moser}, that does not appear to be easy to fix with the strategy suggested there. Nevertheless, the result holds true for quasiconvex integrands and can be obtained as a consequence of Theorems \ref{morethanWLMVMO} and \ref{BdryRegularityTheorem}. The use of the BMO-local minimality property for this class of extremals seems to provide the necessary estimates that lead to higher regularity via a Caccioppoli inequality.

Concerning boundary regularity, it is often convenient to work in the model situation in which the domain is assumed to be a half ball. We introduce the following notation for that purpose. 

\begin{definition}
Given $x_0\in \R^{n-1}\times \{0\}$ and $R>0$, we denote $ \Gamma:=B(x_0,R)\cap \R^{n-1}\times \{0\}$ and  define the Sobolev space
\begin{equation*}
\W^{1,p}_{\Gamma}(B^+(x_0,R),\R^N):=\left\{  v\in \W^{1,p}((B^+(x_0,R),\R^N)\st v=0\mbox{ on }\Gamma      \right\}.
\end{equation*}
\end{definition}

The following lemmata are well known in the boundary regularity theory and will be often be used in the subsequent sections. 

\begin{proposition}\label{Poincnormalderiv}
Let $p\in [1, \infty)$ and $v\in \W^{1,p}_\Gamma(B^+(x_0,R),\R^N)$ for some $x_0\in \R^{n-1}\times \{0\}$ and $R>0$. Then, 
\begin{equation*}
\underset{B^+(x_0,R)}{\mint}\left|\frac{v}{R}\right|^{p}\dx\leq\,\frac{1}{p}\underset{B^+(x_0,R)}{\mint}|\nabla_\mathrm{n} v|^{p}\dx.
\end{equation*}
\end{proposition}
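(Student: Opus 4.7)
The plan is to establish the inequality for smooth functions vanishing on $\Gamma$ and then extend by density. Since $v\in\W^{1,p}_\Gamma(B^+(x_0,R),\R^N)$, I may assume $v$ is of class $C^1$ up to $\Gamma$ with $v=0$ on $\Gamma$. The whole argument then reduces to a one-dimensional Fundamental Theorem of Calculus applied along vertical line segments, so I only need to track how the integral of $|\nabla_\Nrm v|^p$ in the normal direction controls $|v|^p$ slicewise.

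Concretely, for $x=(x',x_n)\in B^+(x_0,R)$, writing $x_0=(x_0',0)$ and $h(x'):=\sqrt{R^2-|x'-x_0'|^2}$, I will use
\begin{equation*}
v(x',x_n)=\int_0^{x_n}\nabla_\Nrm v(x',t)\dt,
\end{equation*}
which is valid because $v(x',0)=0$ on $\Gamma$. Hölder's inequality in the $t$-variable (trivial when $p=1$) yields
\begin{equation*}
|v(x',x_n)|^p\leq x_n^{p-1}\int_0^{x_n}|\nabla_\Nrm v(x',t)|^p\dt.
\end{equation*}
Integrating the $x_n$-slice over $(0,h(x'))$ and swapping the order of integration via Fubini, the inner integral becomes $\int_t^{h(x')} x_n^{p-1}\dv{x_n}=\frac{h(x')^p-t^p}{p}\leq \frac{R^p}{p}$, which provides exactly the constant $\frac{1}{p}$ and the factor $R^p$ that the statement requires.

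Integrating then in $x'$ over $\{|x'-x_0'|<R\}$ and dividing both sides by $R^p\mathcal{L}^n(B^+(x_0,R))$ produces the claimed inequality with averaged integrals. Finally, to handle a general $v\in \W^{1,p}_\Gamma(B^+(x_0,R),\R^N)$, I approximate by a sequence $(v_k)\subseteq C^1(\overline{B^+(x_0,R)},\R^N)$ with $v_k=0$ on $\Gamma$ converging to $v$ in $\W^{1,p}$; the inequality passes to the limit since both sides are continuous with respect to the $\W^{1,p}$-norm. No step here is a real obstacle: the only mild care needed is to justify the slicewise use of the trace on $\Gamma$, which is immediate from the definition of $\W^{1,p}_\Gamma$ once the density of smooth functions vanishing near $\Gamma$ in $\W^{1,p}_\Gamma$ is invoked.
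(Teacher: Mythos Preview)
Your proof is correct and is precisely the standard argument that the paper defers to via its citations to Campanato and Beck: integrate along vertical segments using $v=0$ on $\Gamma$, apply the one-dimensional H\"older inequality, then Fubini to extract the factor $\frac{R^p}{p}$. There is nothing to add.
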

The origins of this result can be traced back to \cite[Lemma 5.IV]{Campanatoeqelip}. We refer the reader to \cite[Lemma 3.4]{Beckregweaksols} for a nice exposition of the proof. 
\\

\begin{lemma}\label{quasiminimality}
Let $f\colon\Omega\rightarrow\R^{N\times n}$ be such that $f\in \LL^p(\Omega,\R^{N\times n})$ for some $p\in[1,\infty)$. Then, there is a constant $c>0$, depending only on $p$, such that for any $\omega\subseteq\Omega$ and every $\xi \in\R^{N\times n}$,
\begin{equation*}
\underset{\omega}{\int}\left|f-(f)_\omega\right|^p\dx\leq\,c\underset{\omega}{\int}\left|f-\xi\right|^p\dx.
\end{equation*}
In addition, for every $\eta\in\R^N$ and every $\nu_0\in\R^n$ such that $|\nu_0|=1$,
\begin{equation*}
\underset{\omega}{\int}\left|f-(f\cdot\nu_0)_\omega\otimes\nu_0\right|^p\dx\leq\,c\underset{\omega}{\int}\left|f-\eta\otimes\nu_0\right|^p\dx.
\end{equation*}
\end{lemma}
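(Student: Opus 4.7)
My plan is to derive both inequalities from the same elementary idea: averages minimize $\LL^p$-distance up to a universal constant, and this follows from combining the triangle inequality with Jensen's inequality. No deep machinery is required.

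For the first inequality I would write
\begin{equation*}
f - (f)_\omega = (f - \xi) + (\xi - (f)_\omega),
\end{equation*}
apply the triangle inequality in $\LL^p(\omega)$ to obtain
\begin{equation*}
\underset{\omega}{\int} |f - (f)_\omega|^p \dx \leq 2^{p-1}\underset{\omega}{\int} |f - \xi|^p \dx + 2^{p-1}\mathcal{L}^n(\omega)\,|(f)_\omega - \xi|^p,
\end{equation*}
and then use that $(f)_\omega - \xi = (f - \xi)_\omega$ is itself an average to bound the last term by Jensen's inequality, namely $\mathcal{L}^n(\omega)\,|(f)_\omega - \xi|^p \leq \int_\omega |f - \xi|^p \dx$. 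This yields the first claim with $c = 2^p$.

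For the second inequality the same idea applies, but one has to handle the rank-one structure carefully. I would decompose
\begin{equation*}
f - (f\cdot\nu_0)_\omega\otimes\nu_0 \;=\; \bigl(f - \eta\otimes\nu_0\bigr) + \bigl(\eta - (f\cdot\nu_0)_\omega\bigr)\otimes\nu_0
\end{equation*}
and apply the triangle inequality as before. Using the Frobenius-norm identity $|a\otimes b| = |a|\,|b|$ together with $|\nu_0|=1$, the extra term reduces to $\mathcal{L}^n(\omega)\,|\eta - (f\cdot\nu_0)_\omega|^p$, which Jensen's inequality dominates by $\int_\omega |\eta - f\cdot\nu_0|^p \dx$. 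Finally, writing $\eta - f\cdot\nu_0 = (\eta\otimes\nu_0 - f)\nu_0$ and using that matrix-vector multiplication by a unit vector is a contraction, $|(\eta\otimes\nu_0 - f)\nu_0| \leq |\eta\otimes\nu_0 - f|$, closes the estimate with the same constant $c = 2^p$.

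There is no genuine obstacle in this argument; the only points that require attention are the two norm identities on rank-one tensors, namely $|a\otimes \nu_0|=|a|$ and the contraction $|M\nu_0|\leq|M|$ when $|\nu_0|=1$. Together they essentially reduce the second inequality to the first applied to the vector field $f\cdot\nu_0\in\R^N$, modulo the decomposition step above.
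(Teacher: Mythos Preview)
Your proposal is correct and follows essentially the same approach as the paper: both arguments use the triangle inequality to split off the constant term, apply $|a\otimes\nu_0|=|a|$ together with Jensen's inequality to control the average, and then invoke the identity $\eta=(\eta\otimes\nu_0)\nu_0$ with the Cauchy--Schwarz contraction $|M\nu_0|\leq|M|$ to close the estimate. Your exposition is in fact slightly more explicit, as you track the constant $c=2^p$ throughout.
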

\begin{proof}
The proof of these estimates relies mainly on triangle inequality. We adapt the ideas from \cite{SchmidtregW1p} to prove the second part of the Lemma, given that the first part is analogous and is found more often in the literature. See also (16)-(18) in \cite{Kronzbdryreg}.
\\
Let $\nu_0\in \mathbb{S}^{n-1}$ and $\eta\in\R^N$ arbitrary. Then, 
\begin{align*}
\underset{\omega}{\mint}\left|f-(f\cdot\nu_0)_\omega\otimes\nu_0\right|^p\dx&\leq c\,\underset{\omega}{\mint}\left|f-\eta\otimes\nu_0\right|^p\dx+c\,\underset{\omega}{\mint}\left|((f\cdot\nu_0)_\omega-\eta)\otimes\nu_0\right|^p\dx.
\end{align*}
Observe that the second term in the right hand side of this inequality is the mean integral of a constant vector. Therefore, recalling that $|a\otimes b|=|a||b|$ and applying Jensen's inequality, we have that 
\begin{align*}
\left|((f\cdot\nu_0)_\omega-\eta)\otimes\nu_0\right|^p= &\left|(f\cdot\nu_0-\eta)_\omega\right|^p\notag\\
\leq&\,\underset{\omega}{\mint}|f\cdot\nu_0-\eta|^p\dx.
\end{align*} 
On the other hand, since $|\nu_0|=1$, we have $\eta=(\nu_0\cdot\nu_0)\eta=(\eta\otimes\nu_0)\cdot\nu_0$. Therefore, by Cauchy-Schwarz inequality,
\begin{align*}
\underset{\omega}{\mint}|f\cdot\nu_0-\eta|^p\dx=&\underset{\omega}{\mint}|f\cdot\nu_0-(\eta\otimes\nu_0)\cdot\nu_0|^p\dx\\
\leq &\underset{\omega}{\mint}|f-(\eta\otimes\nu_0)|^p\dx.
\end{align*}
By bringing together the three chains of inequalities above, we obtain the desired result.
\end{proof}

\subsection{Characterization of regular boundary points}\label{SectRegularSet}

The results in this section establish that the average value at boundary points of the gradient of Sobolev functions vanishing on the (smooth) boundary of their domain is essentially given by the average of the the normal derivative,  i.e., the averages of the tangential derivatives do not play a significant role. Despite their elementary nature, to the best of the author knowledge, they were not previously used systematically in this context. 

For smooth functions defined on the half unit ball $B^+(x_0,r)$, with $x_0\in\R^{n-1}\times\{0\}$, it is clear that the partial derivatives  with respect to the first $n-1$ variables vanish at at $x_0$. This is precisely what motivates the results below. 

The strategy that we follow is also intuitively clear:  due to the Divergence Theorem, for $1\leq i\leq n-1$ we can express $(\nabla_\mathrm{i}u(y))_{B^+(x_0,r)}$ in terms of a surface integral of the function $u$ and, since $u$ vanishes on $\Gamma$, by the Fundamental Theorem of Calculus we know that  the pointwise values of $u$ on $\partial B^+(x_0,r)$ depend exclusively on its derivative with respect to the $n$-th variable, from where the result will follow.

We remark that, although we only state the following lemma and corollary for $p>\frac{3}{2}$, this restriction comes exclusively from the curvature of the unit ball and the way in which the Jacobian of the corresponding transformation of variables blows up as we approach $\Gamma$. Hence, considering slightly modified domains that are still flat in the portion of their boundary normal to $\mathrm{e_n}$ could lead to improving this exponent. 

\begin{lemma}\label{Lemmitaaveragesgoto0}
Let $p>\frac{3}{2}$. There exists a constant $c=c(n,p)>0$ such that, for every  $u\in\W^{1,p}_\Gamma(B^+(x_0,r),\R^N)$ with $x_0\in\R^{n-1}\times\{0\}$ and $r>0$, the following inequality remains true for  $1\leq i\leq n-1$
\begin{equation*}
\left|\underset{B^+(x_0,r)}{\mint}\nabla_\mathrm{i}u\dx\right|\leq \,c\left(\underset{B^+(x_0,r)}{\mint}|\nabla_\mathrm{n}u|^p\dx\right)^{\frac{1}{p}}.
\end{equation*}
\end{lemma}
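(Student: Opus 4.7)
My plan is to follow the strategy sketched in the paragraph preceding the lemma. By density it suffices to treat $u\in C^\infty(\overline{B^+(x_0,r)},\R^N)$ with $u=0$ on $\Gamma$, and after a translation I may take $x_0=0$. For $1\leq i\leq n-1$ the Divergence Theorem applied to the vector field $u\,\mathrm{e_i}$ on $B^+(0,r)$ yields
\begin{equation*}
\underset{B^+(0,r)}{\int}\nabla_i u\dx=\underset{\partial B^+(0,r)}{\int}u\,\nu_i\,\dv{\mathcal{H}^{n-1}},
\end{equation*}
where $\nu$ is the outward unit normal. Since $u$ vanishes on the flat portion $\Gamma$ and $\nu$ is parallel to $\mathrm{e_n}$ there, only the spherical cap $S^+(0,r):=\partial B(0,r)\cap\{y_n>0\}$ contributes to the right-hand side.

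Next, for $y=(y',y_n)\in S^+(0,r)$ I would invoke the Fundamental Theorem of Calculus along the vertical segment from $(y',0)\in\Gamma$ to $y$, which gives $|u(y)|\leq\int_0^{y_n}|\nabla_n u(y',t)|\dt$. Parametrising $S^+(0,r)$ as the graph $y'\mapsto(y',h(y'))$ with $h(y'):=\sqrt{r^2-|y'|^2}$ over the disc $B'(0,r)\subseteq\R^{n-1}$, the surface-area element equals $(r/h(y'))\dy'$. Substituting and then applying H\"older's inequality first in $t$ (producing the factor $h(y')^{(p-1)/p}$) and afterwards in $y'$ with conjugate exponent $q:=p/(p-1)$, I would reach a bound of the form
\begin{equation*}
\left|\,\underset{B^+(0,r)}{\int}\nabla_i u\dx\,\right|\leq r\biggl(\,\underset{B'(0,r)}{\int}h(y')^{-q/p}\dy'\biggr)^{1/q}\|\nabla_n u\|_{\LL^p(B^+(0,r))}.
\end{equation*}

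Passing to polar coordinates on $B'(0,r)$ and substituting $\rho=rs$ reduces the first factor to $r^{(n-1)/q-1/p}$ times a constant multiple of the Beta-type integral $\int_0^1 s^{n-2}(1-s^2)^{-q/(2p)}\dv{s}$. This is precisely where the assumption $p>3/2$ enters: convergence at $s=1$ requires $q/(2p)<1$, equivalently $p>3/2$. A direct exponent count (using $1+(n-1)/q-1/p+n/p=n$) then shows the right-hand side equals $c(n,p)\,r^n\bigl(\mint_{B^+(0,r)}|\nabla_n u|^p\dx\bigr)^{1/p}$; dividing by $\mathcal{L}^n(B^+(0,r))$ yields the claim. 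The sole genuine obstacle in the argument is the handling of the degenerate Jacobian $r/h(y')$ near the equator of the cap, which is exactly what forces the restriction $p>3/2$ and, as the authors note immediately before the lemma, what keeps the result from being sharp on the half-ball; the remaining steps are routine applications of Fubini, H\"older and a scaling check.
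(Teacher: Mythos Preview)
Your proposal is correct and follows essentially the same route as the paper: reduce to smooth $u$ with $x_0=0$, apply the Divergence Theorem so that only the spherical cap contributes, parametrize the cap as a graph over $B'(0,r)$ with Jacobian $r/h(y')$, use the Fundamental Theorem of Calculus along vertical segments, and finish with H\"older; the singularity of the Jacobian at the equator forces $p>3/2$ in both arguments. The only cosmetic difference is that you apply H\"older first in $t$ and then in $y'$ (and bound $|y_i|\leq r$ before doing so), whereas the paper first recombines via Fubini into a single integral over $B^+(0,r)$ with weight $|x_i|/(r^2-|x'|^2)^{1/2}$ and applies H\"older once---this yields the weight $|x_i|^{p'}(r^2-|x'|^2)^{-p'/2}$ whose $x_n$-integration reduces the singularity to $(r^2-|x'|^2)^{-(p'-1)/2}$, giving the identical convergence threshold $p'<3$.
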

\begin{proof}
Without loss of generality we assume that $x_0=0$, the general case following then by a translation argument. 

By the Divergence Theorem,
\begin{align*}
\left|\underset{B^+(0,r)}{\int}\nabla_\mathrm{i}u(y)\dy\right|=\left| \underset{\partial B^+(0,r)}{\int}u(x)\frac{x_i}{r}\dv\mathcal{H}^{n-1}(x) \right|.
\end{align*}
We parametrize the curve $\partial B^+(0,r)\cap\partial B(0,r)$ using the usual Cartesian coordinates and the parametrization $\gamma:B_{\mathrm{n-1}}(0,r)\rightarrow \partial B^+(0,r)\cap\partial B(0,r) $ given by
\begin{equation*}
\gamma(x')=\gamma(x_1,...,x_{n-1})=\left(x',(r^2-|x'|^2)^{\frac{1}{2}}\right).
\end{equation*}
Using the Binet-Cauchy formula (see \cite[Section 3.2]{EvGar}), it is easy to see that the corresponding Jacobian is expressed by
\begin{equation*}
J\gamma(x')=\frac{r}{(r^2-|x'|^2)^\frac{1}{2}}.
\end{equation*}
For a function $u\in\W^{1,p}_\Gamma(B^+(0,r),\R^N)\cap C^1(B^+(0,r),\R^N)$, we evaluate the parametrization into the stated hypersurface integral and use the fact that $u(x',0)=0$ for $x'\in B_{\mathrm{n}-1}(0,r)$ to deduce that, for $1\leq i\leq n-1$,
\begin{align*}
\left|\underset{\partial B^+(0,r)}{\int}u(x)\frac{x_i}{r}\dv\mathcal{H}^{n-1}(x) \right|=&\left| \underset{ B_{\mathrm{n}-1}(0,r)}{\int}u\left(x', (r^2-|x'|^2)^{\frac{1}{2}}  \right)\frac{x_i}{(r^2-|x'|^2)^{\frac{1}{2}} }\dv x' \right|\notag\\
\leq &\underset{ B_{\mathrm{n}-1}(0,r)}{\int}\left| u\left(x', (r^2-|x'|^2)^{\frac{1}{2}}  \right)-u(x',0)\right|\frac{|x_i|}{(r^2-|x'|^2)^{\frac{1}{2}} }\dv x'\notag\\
=&\underset{ B_{\mathrm{n}-1}(0,r)}{\int}\left| \int_0^{(r^2-|x'|^2)^\frac{1}{2}} \nabla_\mathrm{n}u(x',t)\dt \right|\frac{|x_i|}{(r^2-|x'|^2)^{\frac{1}{2}} }\dv x'\notag\\
\leq& \underset{B^+(0,r)}{\int}|\nabla_\mathrm{n}u(x)|  \frac{|x_i|}{(r^2-|x'|^2)^{\frac{1}{2}} }\dx.
\end{align*}
We use Jensen and H\"{o}lder inequalities to conclude from above that, for $p'=\frac{p}{p-1}$,
\begin{align}\label{AfterHolder}
\left|\underset{B^+(0,r)}{\int}\nabla_\mathrm{i}u(y)\dy\right|& \leq \left(\underset{B^+(0,r)}{\int}\left|\nabla_\mathrm{n}u(x)\right|^p\dx  \right)^\frac{1}{p}\left(\underset{B^+(0,r)}{\int}\frac{|x_i|^{p'}}{(r^2-|x'|^2)^{\frac{p'}{2}} }\dx  \right)^\frac{1}{p'}.
\end{align}
We are only left with estimating the second factor above. Since we are assuming that $1\leq i\leq n-1$, such a factor does not depend on the variable $x_n$. Therefore, 
\begin{align}\label{intinn-1}
\underset{B^+(0,r)}{\int}\frac{|x_i|^{p'}}{(r^2-|x'|^{2})^{\frac{p'}{2}} }\dx =& \underset{ B_{\mathrm{n}-1}(0,r)}{\int}\int_0^{(r^2-|x'|^2)^\frac{1}{2}}\frac{|x_i|^{p'}}{(r^2-|x'|^2)^{\frac{p'}{2}}}\dv x_n\dx'\notag\\
=&\underset{ B_{\mathrm{n}-1}(0,r)}{\int}\frac{|x_i|^{p'}}{(r^2-|x'|^2)^\frac{p'-1}{2}}\dx'\notag\\
\leq &\underset{ B_{\mathrm{n}-1}(0,r)}{\int}\frac{|x'|^{p'}}{(r^2-|x'|^2)^\frac{{p'}-1}{2}}\dx'.
\end{align}
This is the integral of a radial function in $\R^{n-1}$ and can therefore be computed, for $p'< 3$, as

\begin{align}\label{IntofRadF}
\underset{ B_{\mathrm{n}-1}(0,r)}{\int}\frac{|x'|^{p'}}{(r^2-|x'|^2)^\frac{{p'}-1}{2}}\dx'=n\,\omega_\mathrm{n}\int_0^r\frac{\rho^{p'}}{(r^2-\rho^2)^\frac{{p'}-1}{2}}\rho^{n-2}\dv\rho=c(n,p')r^{n}.
\end{align}
The desired inequality is obtained, for $p>\frac{3}{2}$, from (\ref{AfterHolder})-(\ref{IntofRadF}) after taking averages. 

The general case of a function in $\W^{1,p}_\Gamma(B^+(0,r),\R^N)$ follows by approximation. 
\end{proof}

\begin{corollary}\label{corollequivnormaldec}
Let $p>\frac{3}{2}$. There exists a constant $c_*=c_*(n,p)>0$ such that, for every  $u\in\W^{1,p}_\Gamma(B^+(x,r),\R^N)$ with $x\in\R^{n-1}\times\{0\}$,
\begin{equation*}
\underset{B^+(x,r)}{\mint}\left|\nabla u - (\nabla_\mathrm{n} u)_{x,r}\otimes \mathrm{e}_\mathrm{n}\right|^p\dy\leq c_*\underset{B^+(x,r)}{\mint}\left|\nabla u - (\nabla u)_{x,r}\right|^p\dy.
\end{equation*}

\end{corollary}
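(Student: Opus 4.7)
The plan is to reduce the corollary to a direct application of Lemma \ref{Lemmitaaveragesgoto0} by carefully choosing an auxiliary function in $\W^{1,p}_\Gamma$ and then invoking the triangle inequality in $\LL^p$.

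First I would use the triangle inequality to split
\[
\left|\nabla u-(\nabla_{\mathrm n}u)_{x,r}\otimes\mathrm{e}_{\mathrm n}\right|
\le \bigl|\nabla u-(\nabla u)_{x,r}\bigr|+\bigl|(\nabla u)_{x,r}-(\nabla_{\mathrm n}u)_{x,r}\otimes\mathrm{e}_{\mathrm n}\bigr|,
\]
so that, after raising to the $p$-th power, averaging, and applying the elementary inequality $(a+b)^{p}\le 2^{p-1}(a^{p}+b^{p})$, the proof reduces to controlling the constant matrix
\[
(\nabla u)_{x,r}-(\nabla_{\mathrm n}u)_{x,r}\otimes\mathrm{e}_{\mathrm n}
=\sum_{i=1}^{n-1}(\nabla_{\mathrm i}u)_{x,r}\otimes\mathrm{e}_{\mathrm i}
\]
by the right-hand side of the desired estimate.

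The key step is to introduce the auxiliary function $v(y):=u(y)-a\,y_{\mathrm n}$, where $a:=(\nabla_{\mathrm n}u)_{x,r}\in\R^{N}$. Since $y_{\mathrm n}=0$ on $\Gamma$ and $u$ already vanishes on $\Gamma$, we have $v\in\W^{1,p}_{\Gamma}(B^{+}(x,r),\R^{N})$, and by construction $\nabla_{\mathrm i}v=\nabla_{\mathrm i}u$ for $i=1,\dots,n-1$, while $\nabla_{\mathrm n}v=\nabla_{\mathrm n}u-(\nabla_{\mathrm n}u)_{x,r}$. Applying Lemma \ref{Lemmitaaveragesgoto0} to $v$ (which is where the restriction $p>\tfrac{3}{2}$ enters, via the assumption on $u$) yields, for each $i<n$,
\[
\bigl|(\nabla_{\mathrm i}u)_{x,r}\bigr|=\bigl|(\nabla_{\mathrm i}v)_{x,r}\bigr|
\le c\left(\underset{B^{+}(x,r)}{\mint}\bigl|\nabla_{\mathrm n}u-(\nabla_{\mathrm n}u)_{x,r}\bigr|^{p}\dy\right)^{\!\frac{1}{p}}.
\]

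To close the argument I would use that the projection onto $\mathrm{e}_{\mathrm n}$ is $1$-Lipschitz, i.e.\ $|\nabla_{\mathrm n}u-(\nabla_{\mathrm n}u)_{x,r}|\le|\nabla u-(\nabla u)_{x,r}|$, which allows us to replace the normal-component oscillation by the full-gradient oscillation in the bound above. Summing over $i=1,\dots,n-1$ (which produces a constant depending only on $n$ and $p$) and combining with the triangle-inequality decomposition above gives the claimed estimate with $c_{\ast}=c_{\ast}(n,p)$. The only subtlety is verifying that $v$ indeed lies in $\W^{1,p}_{\Gamma}$, which is immediate; no delicate analysis is required beyond that.
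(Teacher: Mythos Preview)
Your proof is correct and essentially identical to the paper's argument: the auxiliary function $v(y)=u(y)-(\nabla_{\mathrm n}u)_{x,r}\,y_{\mathrm n}$ is exactly the paper's $\tilde u$, and both proofs proceed by isolating the tangential averages $|(\nabla_{\mathrm i}u)_{x,r}|$ via the triangle inequality, applying Lemma~\ref{Lemmitaaveragesgoto0} to this shifted function, and then bounding $|\nabla_{\mathrm n}u-(\nabla_{\mathrm n}u)_{x,r}|$ by the full-gradient oscillation. The only difference is presentational ordering.
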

\begin{proof}
Let $x\in \R^{n-1}\times\{0\}$ and $r>0$. Define the function $\tilde{u}\colon B^+(x,r)\rightarrow\R^N$ by $\tilde{u}(y):= u(y)-(\nabla_\mathrm{n}u)_{x,r}y_n$.
Observe that $\tilde{u}=0$ on $\Gamma$ and, for $1\leq i \leq n-1$, $\nabla_\mathrm{i}\tilde{u}=\nabla_\mathrm{i}{u}$. Then, 
\begin{align*}
&\underset{B^+(x,r)}{\mint}\left|\nabla u - (\nabla_\mathrm{n} u)_{x,r}\otimes \mathrm{e}_\mathrm{n}\right|^p\dy\notag\\
\leq&\,c(p) \underset{B^+(x,r)}{\mint}\left|\nabla u - (\nabla_1 u)_{x,r}\otimes \mathrm{e}_1 -(\nabla_2 u)_{x,r}\otimes \mathrm{e}_2 -...- (\nabla_\mathrm{n} u)_{x,r}\otimes \mathrm{e}_\mathrm{n}\right|^p\dy\notag\\
&+\,c(p)\left(\left| (\nabla_1 u)_{x,r}     \right|^p+\left| (\nabla_2 u)_{x,r}     \right|^p+...+\left| (\nabla_\mathrm{n-1} u)_{x,r}     \right|^p\right) \notag\\
=&\,c(p)\underset{B^+(x,r)}{\mint}\left|\nabla u - (\nabla u)_{x,r}\right|^p\dy + \,c(p)\left(\left| (\nabla_1 \tilde{u})_{x,r}     \right|^p+\left| (\nabla_2 \tilde{u})_{x,r}     \right|^p+...+\left| (\nabla_\mathrm{n-1} \tilde{u})_{x,r}     \right|^p\right).
\end{align*}
We now apply Lemma  \ref{Lemmitaaveragesgoto0} to the function $\tilde{u}$ and conclude from above that
\begin{align*}
&\underset{B^+(x,r)}{\mint}\left|\nabla u - (\nabla_\mathrm{n} u)_{x,r}\otimes \mathrm{e}_\mathrm{n}\right|^p\dy\notag\\
\leq&\,c(p)\underset{B^+(x,r)}{\mint}\left|\nabla u - (\nabla u)_{x,r}\right|^p\dy + \,c(n,p)\underset{B^+(x,r)}{\mint}|\nabla_\mathrm{n} u -(\nabla_\mathrm{n}u)_{x,r}|^p\dy\notag\\
\leq&\, c(n,p)\underset{B^+(x,r)}{\mint}\left|\nabla u - (\nabla u)_{x,r}\right|^p\dy.
\end{align*}
This proves the claim. 
\end{proof}
\begin{remark}
This inequality has as an immediate consequence the possibility of restating the definition of the regular set  for boundary regularity results already present in the literature. In what follows we refer to two interesting examples in which Corollary \ref{corollequivnormaldec} enables us to reformulate such definition of the regular set.

\begin{itemize}
\item[I.] In \cite[Theorem 3.1]{Grotowskinonlin}, Grotowski shows that for $\Omega$ and $g$ as in Theorem  \ref{BdryRegularityTheorem} and for coefficients $A:\Omega\times\R^N\times \R^{N\times n}\rightarrow\R^{N\times n}$ which are smooth, with bounded derivative $|A_p(x,\xi,p)|\leq L$,  uniformly strongly elliptic and satisfying a H\"older continuity assumption similar to (HC), weak solutions  $u\in \W^{1,2}_g(\Omega,\R^N)$ to the homogeneous system
\begin{equation*}
\mathrm{div}\,A(\cdot,u,\nabla u)=0
\end{equation*}
are H\"older continuous on a neighbourhood in $\overline{\Omega}$ of all points $y\in\partial \Omega$ satisfying 
\begin{equation}\label{normalcondRegBdryPoints}
\underset{r\rightarrow 0}{\liminf}\underset{\Omega(y,r)}{\mint}|\nabla u-\nabla g-(\nabla_{\nu(y)}(u-g))_{y,r}\otimes\nu(y)|^2\dx=0,
\end{equation}
where $\nu(y)$ is the outward-pointing unit normal vector to $\partial\Omega$ at the point $y$. 

Using Corollary \ref{corollequivnormaldec}, condition (\ref{normalcondRegBdryPoints}) can be relaxed, after transforming $\partial\Omega$ back to its original shape, to requiring 

\begin{equation}\label{improvedcondRegBdryPoints}
\underset{r\rightarrow 0}{\liminf}\underset{\Omega(y,r)}{\mint}|\nabla u-\nabla g-(\nabla u-\nabla g)_{y,r}|^2\dx=0.
\end{equation}
\item[II.] An equivalent argument can be applied for the characterization of the regular set of boundary points in \cite[Theorem 1.1]{BeckVarInt}. We recall that Beck's result concerns $\W^{1,p}$-local minimizers for functionals of the type
\begin{equation*}
u \mapsto \underset{\Omega}{\int}\left(F(\cdot,u,\nabla u)+h(x,u)\right)\dx
\end{equation*}
over the class $\W^{1,p}_g(\Omega,\R^N)$ under essentially the same assumptions over $\Omega$ and $g$ that we have mentioned before. In addition, $F$ satisfies essentially the corresponding versions of hypotheses $(\mathrm{\overline{H}}0)$-$(\mathrm{\overline{H}}2)$ and $(\mathrm{HC})$ that are valid for integrands of the type $F(x,u,z)$, while $h(x,u)$ is assumed to be a Carath\'eodory function.
\end{itemize}
Finally, we remark that characterizing the regular points as in (\ref{improvedcondRegBdryPoints}) results interesting also in view of the available strategies of dimension reduction (see \cite[Ch. VIII]{Giaq}, \cite[Ch. 9]{Giusti}, \cite{MingioneBoundsSingSet}), to establish the existence of at least one regular boundary point. To the best of the author's knowledge, such existence is only known for integrands satisfying a strong convexity condition, but the required estimates seem to be much harder to obtain in the quaxiconvex setting. See \cite{JostMeierBdryReg,DKM,KristMingBdryreg}.
\end{remark}

\subsection{Proof of Theorem \ref{BdryRegularityTheorem}}

This section is devoted to the proof of the regularity up to the boundary result, Theorem \ref{BdryRegularityTheorem}. 

We begin by establishing the following technical lemma regarding a shifted version of the integrand $F$ defining the functional. The strategy that we follow to prove it is fully inspired in the truncation technique from \cite[Lemma 2.3]{AcFus}, which is also used for the shifted functionals in the partial regularity result in \cite{Kristensen}. However, in order to tackle the problem of boundary regularity for $\W^{1,q}$-local minimizers, we need to adapt Kristensen-Taheri's strategy in such a way that the blown-up sequence near the boundary can still be shown to satisfy a suitable Caccioppoli inequality. 

Since each element of a blown-up sequence on the boundary will depend on the normal vector at each point, the estimates for the functional have to be treated as if there was certain $x$-dependence, even if the original integrand were homogeneous. A natural strategy to simplify the problem is then to flatten the boundary before constructing the  blown-up sequence. Having done this, we need to define the shifted integrands given in Lemma \ref{LemmaGrowths0} in order to exploit all our assumptions. The Lemma is stated for the model situation $\Omega=B ^+$. The integrand $F_j$ in the statement corresponds to the standard shifted integrand that enables us to reduce the problem to integrands with growth of the type $|F(x,z)|\leq c\left( |z|^2+|z|^p  \right)$. On the other hand, $\tilde{F}_j$ enables us to use the minimality of $\overline{u}$, essential to obtain the desired Caccioppoli inequality. This is so, thanks to the lack of $x$-dependence in the term $F_z$ that appears in the definition of $\tilde{F}_j$. Finally, the auxiliary integrand $G_j$ enables us to apply the strong quasiconvexity condition as defined in $(\mathrm{\overline{H}}2)$, given that it does not depend on $x$ in any way. We will be able to see all these properties in action at Step 2.1 of the proof of Proposition \ref{PropBoundaryDecay} below.

\begin{lemma}\label{LemmaGrowths0}

Let $(\lambda_j)$ be a sequence in $(0,\infty)$ converging  to $0$, $(x_j)\subseteq  \Gamma$ and $(r_j)$ a sequence of radii such that $B^+(x_j,r_j)\subseteq B^+$  and satisfying $\frac{r_j^\alpha}{\lambda_j}\leq 1$. In addition, let $(\xi_j)\subseteq\R^{N\times n}$ be a sequence with $|\xi_j|\leq m$ for some given $m>0$. Let $F\colon \overline{B^+}\rightarrow\R$ satisfy $\mathrm{(\overline{H}0)}-\mathrm{(\overline{H}2)}$ and $\mathrm{(HC)}$ adapted to the model situation $\Omega=B^+$. Consider the functionals $F_j\colon B^+\times \R^{N\times n}\rightarrow\R^{N\times n}$, $\tilde{F}_j\colon B^+\times \R^{N\times n}\rightarrow\R$ and $G_j\colon \R^{N\times  n}\rightarrow\R$ given by
\begin{align}
F_j(x,z):=&\frac{F(x_j+r_jx,\xi_j+\lambda_jz)-F(x_j+r_jx,\xi_j)-\lambda_jF_{z}(x_j+r_jx,\xi_j)[z]}{\lambda_j^2}\notag\\
=&\int_0^1(1-t)F_{zz}(x_j+r_jx,\xi_j+t\lambda_jz)[z,z];\notag
\end{align}

\begin{align}
\tilde{F}_j(x,z):=&\frac{F(x_j+r_jx,\xi_j+\lambda_jz)-F(x_j+r_jx,\xi_j)-\lambda_j F_{z}(x_j,\xi_j)[z]}{\lambda_j^2};\notag
\end{align}
and
\begin{align}
G_j(z):=&\frac{F(x_j,\xi_j+\lambda_jz)-F(x_j,\xi_j)-\lambda_jF_{z}(x_j,\xi_j)[z]}{\lambda_j^2}\notag\\
=&\int_0^1(1-t)F_{zz}(x_j,\xi_j+t\lambda_jz)[z,z].\notag
\end{align}

Then, the following estimates remain true for every $x\in B^+$ and every $z,w\in\R^{N\times n}$:
\begin{itemize}
\item[$\mathrm{(i)}$] $|F_j(x,z)-\tilde{F}_j(x,z)|\leq c(m)\frac{r_j^{\alpha}}{\lambda_j}(1+|z|^2) $;
\item[$\mathrm{(ii)}$] $|\tilde{F}_j(x,z)-\tilde{F}_j(x,w)|\leq c(m)(\frac{r_j^{\alpha}}{\lambda_j}+|w|+|z|+\lambda_j^{p-2}(|w|^{p-1}+|z|^{p-1}))|z-w|$;
\item[$\mathrm{(iii)}$] $|F_j(x,z)-G_j(z)|\leq\,c(m)\frac{r_j^{\alpha}}{\lambda_j}(1+|z|^2+\lambda_j^{p-2}|z|^{p})$;
\item[$\mathrm{(iv)}$] $|{F}_j(x,z)-{F}_j(x,w)|\leq c(m)\left(|z|+|w|+\lambda_j^{p-2}(|z|^{p-1}+|w|^{p-1})\right)|z-w|$.
\end{itemize}
\end{lemma}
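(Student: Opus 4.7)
My plan is to estimate each of (i)--(iv) directly by algebraic manipulation of the formulae defining $F_j$, $\tilde{F}_j$, $G_j$, writing every difference that appears as an integral of its derivative via the Fundamental Theorem of Calculus, and then invoking either the H\"older continuity in $x$ from $(\mathrm{HC})$ or the polynomial-growth consequences of $(\overline{\mathrm{H}}0)$--$(\overline{\mathrm{H}}2)$ in $z$. Concretely, I shall use that, as a consequence of $(\overline{\mathrm{H}}1)$ together with the rank-one convexity implied by $(\overline{\mathrm{H}}2)$, one has the global Lipschitz bound $|F_z(x,z)|\leq c(1+|z|^{p-1})$ (see \cite[Proposition 2.32]{dacor}), and that on any compact $z$-set the continuity of $F_{zz}$ from $(\overline{\mathrm{H}}0)$ provides a uniform bound. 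As in \cite[Lemma II.3]{AcFus}, a case split $|\lambda_j z|\leq 1$ vs.\ $|\lambda_j z|>1$ will then yield growth estimates of the type $|F_z(x,\xi_j+\lambda_j z)-F_z(x,\xi_j)|\leq c(m)\bigl(\lambda_j|z|+\lambda_j^{p-1}|z|^{p-1}\bigr)$ that feed directly into the four bounds to be proved. Throughout, the boundedness $|\xi_j|\leq m$, together with the assumptions $\lambda_j\leq 1$ and $r_j^\alpha/\lambda_j\leq 1$ available for large~$j$, will be absorbed into a generic constant $c(m)$.

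For (i), I will compute $F_j(x,z)-\tilde{F}_j(x,z)=\lambda_j^{-1}\bigl[F_z(x_j,\xi_j)-F_z(x_j+r_jx,\xi_j)\bigr][z]$; a single application of $(\mathrm{HC})$ to the bracket, combined with $|\xi_j|\leq m$ and $|z|\leq 1+|z|^2$, produces the claimed bound. For (ii) and (iv), I will apply the Fundamental Theorem of Calculus in $z$ to rewrite each of the resulting differences of $F$ as $\lambda_j^{-1}\int_0^1(F_z(\cdot,\xi_j+\lambda_j(w+t(z-w)))-F_z(\cdot,\xi_j))[z-w]\,\mathrm{d}t$, where the dot stands for $x_j+r_jx$ in (iv) and for a mixture of $x_j+r_jx$ and $x_j$ in (ii). The increment of $F_z$ in its second argument is then controlled by the case split above, giving $|z-w|$ multiplied by $c(m)\bigl(|z|+|w|+\lambda_j^{p-2}(|z|^{p-1}+|w|^{p-1})\bigr)$; for (ii) one additionally has to replace $F_z(x_j+r_jx,\xi_j)$ by $F_z(x_j,\xi_j)$, which produces, via $(\mathrm{HC})$, an extra contribution of $c(m)\,r_j^\alpha/\lambda_j$.

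The main obstacle is (iii), where a na\"{\i}ve bound applying $(\mathrm{HC})$ separately to $F(x_j+r_jx,\xi_j+\lambda_j z)-F(x_j,\xi_j+\lambda_j z)$ and to $F(x_j+r_jx,\xi_j)-F(x_j,\xi_j)$ produces the prefactor $r_j^\alpha/\lambda_j^2$, one power of $\lambda_j$ worse than what is required. The cancellation must therefore be exploited. I will introduce the auxiliary function $H(\xi):=F(x_j+r_jx,\xi)-F(x_j,\xi)$ and rewrite the $F$-contribution to $F_j(x,z)-G_j(z)$ as $\lambda_j^{-2}\bigl(H(\xi_j+\lambda_j z)-H(\xi_j)\bigr)$. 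The Fundamental Theorem of Calculus yields
\begin{equation*}
H(\xi_j+\lambda_j z)-H(\xi_j)=\lambda_j\int_0^1\bigl[F_z(x_j+r_jx,\xi_j+t\lambda_j z)-F_z(x_j,\xi_j+t\lambda_j z)\bigr][z]\,\mathrm{d}t,
\end{equation*}
and this extra factor $\lambda_j$ is precisely what turns $r_j^\alpha/\lambda_j^2$ into $r_j^\alpha/\lambda_j$. Applying $(\mathrm{HC})$ inside the integral bounds the integrand by $c\,r_j^\alpha(1+|\xi_j+t\lambda_j z|^{p-1})|z|$, and elementary manipulations using $|\xi_j|\leq m$, $|z|\leq 1+|z|^2$, and $\lambda_j^{p-1}|z|^p\leq\lambda_j^{p-2}|z|^p$ for $\lambda_j\leq 1$ yield the growth factor $1+|z|^2+\lambda_j^{p-2}|z|^p$. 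The remaining $F_z$-contribution to $F_j(x,z)-G_j(z)$ is handled exactly as in (i), and summing the two contributions concludes the proof.
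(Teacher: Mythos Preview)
Your proposal is correct and follows essentially the same approach as the paper's proof: for (i) a direct application of $(\mathrm{HC})$ to the difference of the $F_z$ terms; for (ii) and (iv) the Fundamental Theorem of Calculus in the $z$-variable combined with the Acerbi--Fusco case split $|\lambda_j z|\lessgtr 1$ to control increments of $F_z$; and for (iii) exactly the cancellation you describe, writing the $F$-contribution as $\lambda_j^{-1}\int_0^1[F_z(x_j+r_jx,\xi_j+t\lambda_j z)-F_z(x_j,\xi_j+t\lambda_j z)][z]\,\mathrm{d}t$ to recover the extra factor $\lambda_j$ before applying $(\mathrm{HC})$. The paper organizes (ii) by first adding and subtracting $F_z(x_j,\xi_j+\lambda_j w+t\lambda_j(z-w))$ inside the integral to separate the $x$-H\"older term (your ``extra contribution'') from the $z$-increment term, but this is only a notational variant of what you describe.
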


\begin{proof}

For $x\in B^+$ and $z\in\R^{N\times n}$, 
\begin{itemize}
\item[($\mathrm{i}$)] $|F_j(x,z)-\tilde{F}_j(x,z)|=\frac{1}{\lambda_j}|F_z (x_j+r_jx,\xi_j)-F_z (x_j,\xi_j)||z|\leq c(m,\Omega)\frac{r_j^\alpha}{\lambda_j}|z|\leq c(m)\frac{r_j^{\alpha}}{\lambda_j}(1+|z|^2) $.
\item[($\mathrm{ii}$)] By the Fundamental Theorem of Calculus, we have
\begin{align*}
&|\tilde{F}_j(x,z)-\tilde{F}_j(x,w)|\notag\\
=&\frac{1}{\lambda_j}\left|\int_0^1 \left(F_z(x_j+r_jx,\xi_j+ \lambda_jw+t\lambda_j(z-w))[z-w]- {F_z(x_j,\xi_j)}[z-w]\right)\dt\right|\notag\\
\leq& \frac{1}{\lambda_j}\int_0^1 \left|\left(F_z(x_j+r_jx,\xi_j+ \lambda_jw+t\lambda_j(z-w))-F_z(x_j,\xi_j+ \lambda_jw+t\lambda_j(z-w))\right)[z-w]\right|\dt\notag\\
&+\frac{1}{\lambda_j}\int_0^1 \left|\left(F_z(x_j,\xi_j+ \lambda_jw+t\lambda_j(z-w))-F_z(x_j,\xi_j) \right)[z-w]\right|\dt\notag\\
=:&\,\mathrm{I+II}.
\end{align*}
Then, (HC)$_2$ implies 
\begin{equation}\label{growthslemmaI}
\mathrm{I}\leq\, c(m)\,\frac{r_j^\alpha}{\lambda_j}(1+\lambda_j^{p-1}|w|^{p-1}+\lambda_j^{p-1}|z-w|^{p-1})|z-w|.
\end{equation}
Regarding II, for each $t\in (0,1)$ we consider the two following cases 
\\
\textit{Case 1.} $|\lambda_jw|+|\lambda_jt(z-w)|\leq 1$. Then, the identity
\begin{align*}
&\left(F_z(x_j,\xi_j+ \lambda_jw+t\lambda_j(z-w))-F_z(x_j,\xi_j) \right)[z-w]\notag\\
=&\int_0^1F_{zz}(x_j,\xi_j+\lambda_js(w+t(z-w)))[z-w,\lambda_j(w+t(z-w))]\dv s
\end{align*}
implies, in this case, 
\begin{equation*}
\mathrm{II}\leq c(m)\left(|w|+|z-w|\right)|z-w|.
\end{equation*}
\textit{Case 2.} $|\lambda_jw|+|\lambda_jt(z-w)|> 1$. Then, we use ($\mathrm{\overline{H}}$1)-($\mathrm{\overline{H}}$2) to derive that
\begin{align*}
\mathrm{II}\leq& \frac{1}{\lambda_j}\int_0^1 \left( |F_z(x_j,\xi_j+ \lambda_jw+t\lambda_j(z-w))| +|F_z(x_j,\xi_j) | \right) |z-w|\dt\\
\leq& \frac{c(m)}{\lambda_j}(1+|\lambda_jw|^{p-1}+|\lambda_j(z-w)|^{p-1})|z-w|\\
\leq &\frac{c(m)}{\lambda_j}\left(|\lambda_jw|+|\lambda_jt(z-w)|+|\lambda_jw|^{p-1}+|\lambda_j(z-w)|^{p-1}\right)|z-w|.
\end{align*}
The previous inequalities lead to conclude
\begin{equation}\label{growthslemmaII}
\mathrm{II}\leq c(m)\left(|w|+|z-w|+\lambda_j^{p-2}\left(|w|^{p-1}+|z-w|^{p-1}\right) \right)|z-w|.
\end{equation}
Given that $\frac{r_j^\alpha}{\lambda_j}\leq 1$, equations (\ref{growthslemmaI}) and (\ref{growthslemmaII}) imply the claim, after using triangle inequality.
\item[($\mathrm{iii}$)] We have
\begin{align}
&|F_j(x,z)-G_j(z)|\notag\\
\leq&\frac{1}{\lambda_j^2}\left| F(x_j+r_jx,\xi_j+\lambda_jz)- F(x_j+r_jx,\xi_j)+ F(x_j,\xi_j+\lambda_jz) - F(x_j,\xi_j)  \right|\notag\\
&+\frac{1}{\lambda_j}|F_z(x_j+r_jx,\xi_j)-F_z(x_j,\xi_j)||z|\notag\\
\leq& \frac{1}{\lambda_j}\int_0^1 | F_z (x_j+r_jx,\xi_j+t\lambda_jz)-F_z(x_j,\xi_j+t\lambda_jz) ||z|\dt\notag\\
&+\frac{1}{\lambda_j}|F_z(x_j+r_jx,\xi_j)-F_z(x_j,\xi_j)||z|\notag\\
\leq&\,c(m) \frac{r_j^\alpha}{\lambda_j}(1+|\lambda_jz|^{p-1})|z|\notag\\
\leq&\,c(m)\frac{r_j^\alpha}{\lambda_j}(1+|z|^2+\lambda_j^{p-2}|z|^{p}).\notag
\end{align}

\item[($\mathrm{iv}$)] This part of the lemma can be established exactly as in \cite[Lemma II.3]{AcFus}. 

\end{itemize}
This concludes the proof.
\end{proof}

The following lemma will be used to construct suitable test functions to exploit the BMO-minimality of $\overline{u}$ in part (b) of Theorem \ref{BdryRegularityTheorem}. The idea is to obtain control on the mean oscillations of the proposed test functions in terms of those of $\nabla \overline{u}$, whose mean oscillations will be under control, in that particular case, because it is a VMO function. 

\begin{lemma}\label{LemmaBMOforPhi}
Let $q>n$, $B(x_0,R)\subseteq \R^n$ and, for $\varsigma\in (0,1)$, let $\rho\colon\R^n\rightarrow[0,1]$ be a cut-off function such that $\mathbbm{1}_{B(x_0,\varsigma R)}\leq \rho\leq \mathbbm{1}_{B(x_0,R)}$ and $\|\nabla\rho\|_{\LL^\infty}\leq \frac{1}{(1-\varsigma)R}$. Suppose either one of the following situations:
\begin{itemize}
\item[(i)] \textit{($\mathrm{BMO}$ estimates for cut-offs near the boundary).} The function ${v}\in\W^{1,q}_\Gamma(B^+(x_0,R),\R^N)$ takes the form
\begin{equation*}
v(y):= w(y)-y_n(\nabla_\mathrm{n}w)_{x_0,R}
\end{equation*}
for some $w\in\W^{1,q}_\Gamma(B^+(x_0,R),\R^N)$, $x_0\in \R^{n-1}\times\{0\}$. Furthermore, we assume  $\tilde{v}$ to be the extension of $v$ to the whole ball $B(x_0,R)$ that assigns the value of $0$ outside $B^+(x_0,R)$. 

\item[(ii)] \textit{($\mathrm{BMO}$ estimates for cut-offs in the interior).} The function $\tilde{v}\in\W^{1,q}(B(x_0,R),\R^N)$ takes the form
\begin{equation*}
\tilde{v}(y):= w(y)-b(y)
\end{equation*}
for some $w\in\W^{1,q}(B(x_0,R),\R^N)$ and  an affine function $b$ satisfying $\nabla b=(\nabla w)_{x_0,R}$ and $(w-b)_{x_0,R}=0$.
\end{itemize} 

Define $\varphi\colon B(x_0,R)\rightarrow\R^N$ by $\varphi:= \rho \tilde{v}$. Then, there exists a constant $c=c(n,q,\varsigma)>0$ such that, for any $B(x,s)\subseteq B(x_0,R)$,
\begin{align*}
\underset{B(x,s)}{\mint}|\nabla \varphi-(\nabla\varphi)_{x,s}|\dy\leq c\underset{B(x,s)}{\mint}|\nabla \tilde{v}-(\nabla \tilde{v})_{x,s}|\dy+c\left(\underset{B(x_0,R)}{\mint}|\nabla\tilde{v}-(\nabla\tilde{v})_{x_0,R}|^q\dy\right)^\frac{1}{q}.
\end{align*}
\end{lemma}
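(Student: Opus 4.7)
The plan is to decompose $\nabla\varphi = \rho\,\nabla\tilde v + \tilde v\otimes\nabla\rho$ via the Leibniz rule and estimate the mean oscillation of $\nabla\varphi$ against the constant $\xi := \rho_{x,s}(\nabla\tilde v)_{x,s}$, which is admissible by Lemma \ref{quasiminimality}. Writing
\begin{equation*}
\nabla\varphi - \xi = \rho\bigl[\nabla\tilde v - (\nabla\tilde v)_{x,s}\bigr] + \bigl[\rho - \rho_{x,s}\bigr](\nabla\tilde v)_{x,s} + \tilde v\otimes\nabla\rho
\end{equation*}
splits the estimate into three pieces: the first one is bounded by $\|\rho\|_\infty\leq 1$ times $\underset{B(x,s)}{\mint}|\nabla\tilde v-(\nabla\tilde v)_{x,s}|\dy$, producing the first summand in the target inequality, and the other two are lower order contributions carrying derivatives of $\rho$.

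For the middle piece, the factor $\underset{B(x,s)}{\mint}|\rho-\rho_{x,s}|\dy$ is at most $cs/R$ by the gradient bound on $\rho$. To control $|(\nabla\tilde v)_{x,s}|$ I would split $|(\nabla\tilde v)_{x,s}|\leq |(\nabla\tilde v)_{x,s}-(\nabla\tilde v)_{x_0,R}|+|(\nabla\tilde v)_{x_0,R}|$ and upgrade the first summand by H\"older to produce $(R/s)^{n/q}$ times the $\LL^q$-oscillation on $B(x_0,R)$; then the product $(s/R)(R/s)^{n/q}=(s/R)^{1-n/q}\leq 1$, which is the first place where the hypothesis $q>n$ enters decisively. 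For the third piece the bound $\|\nabla\rho\|_\infty\|\tilde v\|_{\LL^\infty(B(x_0,R))}\leq cR^{-1}\|\tilde v\|_{\LL^\infty}$ reduces matters to a Sobolev-Morrey control of $\|\tilde v\|_{\LL^\infty}$, once again available thanks to $q>n$ via $\W^{1,q}\hookrightarrow C^{0,1-n/q}$.

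The final, and in my view main, obstacle is to verify that both $R^{-1}\|\tilde v\|_{\LL^\infty(B(x_0,R))}$ and $|(\nabla\tilde v)_{x_0,R}|$ are dominated by $\bigl(\underset{B(x_0,R)}{\mint}|\nabla\tilde v-(\nabla\tilde v)_{x_0,R}|^q\dy\bigr)^{1/q}$. In case (ii) both inequalities are immediate since $(\nabla\tilde v)_{x_0,R}=0$ and $(\tilde v)_{x_0,R}=0$ by construction, so a direct Poincar\'e-Morrey chain suffices. In case (i) the identity $\nabla_\mathrm{n}v=\nabla_\mathrm{n}w-(\nabla_\mathrm{n}w)_{x_0,R}$ forces $(\nabla_\mathrm{n}v)_{B^+(x_0,R)}=0$, while Lemma \ref{Lemmitaaveragesgoto0} applied to $v\in\W^{1,q}_\Gamma(B^+(x_0,R),\R^N)$ controls the tangential averages $(\nabla_\mathrm{i}v)_{B^+(x_0,R)}$, $1\leq i\leq n-1$, by the normal one. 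The zero-extension $\tilde v$ then equals $0$ on the lower half-ball $B(x_0,R)\setminus B^+(x_0,R)$, so on that set $\nabla\tilde v-(\nabla\tilde v)_{x_0,R}\equiv -(\nabla\tilde v)_{x_0,R}$, and the positive measure of this region yields via Jensen the bound $|(\nabla\tilde v)_{x_0,R}|^q\leq c\underset{B(x_0,R)}{\mint}|\nabla\tilde v-(\nabla\tilde v)_{x_0,R}|^q\dy$. Combining this with Morrey's embedding applied to $\tilde v$ on $B(x_0,R)$ (using that $\tilde v$ vanishes on a set of positive measure in case (i), or has zero mean in case (ii)) closes the estimate.
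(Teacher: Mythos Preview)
Your proposal is correct and follows the same overall architecture as the paper (Leibniz rule, Morrey's embedding with $q>n$, and a passage from $\|\nabla\tilde v\|_{\LL^q}$ to the $\LL^q$-oscillation), but the technical execution differs in two places worth noting.

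First, where you compare $\nabla\varphi$ against the single constant $\xi=\rho_{x,s}(\nabla\tilde v)_{x,s}$ via Lemma~\ref{quasiminimality} and estimate $|(\nabla\tilde v)_{x,s}|$ by a H\"older/domain-inclusion argument producing the factor $(s/R)^{1-n/q}\leq 1$, the paper instead estimates the oscillations of $\rho\nabla\tilde v$ and $\tilde v\otimes\nabla\rho$ separately through a Stegenga-type product identity, and handles the term $|(\nabla\tilde v)_{x,s}|\cdot s\|\nabla\rho\|_\infty$ by rewriting $(\nabla\tilde v)_{x,s}$ via the Divergence Theorem as a surface integral of $\tilde v$, which is then bounded by $\|\tilde v\|_{\LL^\infty}$ and Morrey. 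Your route is more direct and makes the role of $q>n$ visible at that step; the paper's route avoids the H\"older upgrade but requires the surface-integral manoeuvre.

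Second, in case~(i) your observation that $\nabla\tilde v\equiv 0$ on the lower half-ball, hence $|(\nabla\tilde v)_{x_0,R}|^q\leq 2\dashint_{B(x_0,R)}|\nabla\tilde v-(\nabla\tilde v)_{x_0,R}|^q\dy$ by Jensen, is a clean shortcut. The paper instead bounds $|(\nabla_{\mathrm{i}}\tilde v)_{x_0,R}|$ for $i<n$ by applying Lemma~\ref{Lemmitaaveragesgoto0} to the extended function and then invokes $(\nabla_{\mathrm n}\tilde v)_{x_0,R}=(\nabla_{\mathrm n}\tilde w-(\nabla_{\mathrm n}\tilde w)_{x_0,R})_{x_0,R}$. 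Both arguments are valid; yours is shorter and exploits the zero extension directly, while the paper's makes the tangential-average structure explicit.
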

\begin{proof}
We will first prove that, for both cases (i) and (ii),  there exists a constant $c=c(n,q,\varsigma)>0$ such that, for any $B(x,s)\subseteq B(x_0,R)$,
\begin{align}\label{Claim1forBMOPhi}
\underset{B(x,s)}{\mint}|\nabla \varphi-(\nabla\varphi)_{x,s}|\dy\leq c\underset{B(x,s)}{\mint}|\nabla \tilde{v}-(\nabla \tilde{v})_{x,s}|\dy+c\left(\underset{B(x_0,R)}{\mint}|\nabla\tilde{v}|^q\dy\right)^\frac{1}{q}.
\end{align}
We remark that, for the proof of (\ref{Claim1forBMOPhi}), we only use (by an extension argument) that $\tilde{v}$ can be assumed to be in $\W^{1,q}(\R^n,\R^N)$ and that there must exist $x\in B(x_0,R)$ such that $\tilde{v}(x)=0$. This holds for both cases (i) and (ii). Hence, at this stage, we do not use the definition of $\tilde{v}$ as an affine function (which changes depending on whether we are in case (i) or (ii)). 

Motivated by the ideas of Stegenga \cite{Stegenga}, we first note that
\begin{align*}
&\left|\underset{B(x,s)}{\mint}|\rho\nabla \tilde{v}-(\rho\nabla \tilde{v})_{x,s}|\dy-\left|(\nabla \tilde{v})_{x,s}\right|\underset{B(x,s)}{\mint}|\rho-(\rho)_{x,s}|\dy\right| \nonumber\\
\leq&\underset{B(x,s)}{\mint}|(\rho\nabla \tilde{v}-(\rho\nabla \tilde{v})_{x,s})-(\nabla \tilde{v})_{x,s}(\rho-(\rho)_{x,s})|\dy\nonumber\\
\leq&\underset{B(x,s)}{\mint}|\rho(\nabla \tilde{v}-(\nabla \tilde{v})_{x,s})|\dy+(\rho)_{x,s}(\nabla \tilde{v})_{x,s}-(\rho\nabla \tilde{v})_{x,s}|\nonumber\\
\leq&\|\rho\|_{\LL^\infty}\underset{B(x,s)}{\mint}|\nabla \tilde{v}-(\nabla \tilde{v})_{x,s}|\dy+|(\rho)_{x,s}(\nabla \tilde{v})_{x,s}-(\rho\nabla \tilde{v})_{x,s}|.
\end{align*}
We estimate the second term above by
\begin{align*}
\left|(\rho)_{x,s}(\nabla \tilde{v})_{x,s}-(\rho\nabla \tilde{v})_{x,s}\right|=&\left|\underset{B(x,s)}{\mint}\rho(\nabla \tilde{v})_{x,s}\dy-\underset{B(x,s)}{\mint}\rho\nabla \tilde{v}\dy\right|\nonumber\\
\leq &\underset{B(x,s)}{\mint}|\rho||\nabla \tilde{v}-(\nabla \tilde{v})_{x,s}|\dy\nonumber\\
\leq&\|\rho\|_{\LL^\infty}\underset{B(x,s)}{\mint}|\nabla \tilde{v}-(\nabla \tilde{v})_{x,s}|\dy,
\end{align*}
from where we deduce that
\begin{align}
\left|\underset{B(x,s)}{\mint}|\rho\nabla \tilde{v}-(\rho\nabla \tilde{v})_{x,s}|\dy-\left|(\nabla \tilde{v})_{x,s}\right|\underset{B(x,s)}{\mint}|\rho-(\rho)_{x,s}|\dy\right|&\leq 2\underset{B(x,s)}{\mint}|\nabla \tilde{v}-(\nabla \tilde{v})_{x,s}|\dy.\nonumber
\end{align}
In addition, 
\begin{align*}
\left|(\nabla \tilde{v})_{x,s}\underset{B(x,s)}{\mint}|\rho-(\rho)_{x,s}|\dy\right|=&\left|(\nabla \tilde{v})_{x,s}\right|\left|\underset{B(x,s)}{\mint}\left|\rho-\underset{B(x,s)}{\mint}\rho(z)\dv{z}\right|\dy\right|\nonumber\\
\leq&\left|(\nabla \tilde{v})_{x,s}\right|\underset{B(x,s)}{\mint}\underset{B(x,s)}{\mint}|\rho(y)-\rho(z)|\dv{z}\dy\nonumber\\
\leq&\left|(\nabla \tilde{v})_{x,s}\right|\,\|\nabla\rho\|_{\LL^\infty}2s.
\end{align*}
The Divergence Theorem implies that
\begin{align*}
\left|(\nabla \tilde{v})_{x,s}\right|\,\|\nabla\rho\|_{\LL^\infty}2s
\leq&\frac{2}{(1-\varsigma)R}\left|\underset{\partial B(x,s)}{\mint}\tilde{v}\otimes\frac{x-y}{s}\right|\dv{}\mathcal{H}^{n-1}(y)\nonumber\\
\leq&\frac{2n}{(1-\varsigma)R}\underset{\partial B(x,s)}{\mint}\left|\tilde{v}\right|\dv{}\mathcal{H}^{n-1}(y)\nonumber\\
\leq& \frac{2n}{(1-\varsigma)R}\|\tilde{v}\|_{\LL^\infty(B(x_0,R),\R^N)}.
\end{align*}
Furthermore, by Morrey's Embedding Theorem\footnote{See \cite[Theorem 9.12]{Brezisbook}.} we infer, using the property $\tilde{v}(x)=0$ for some $x\in B(x_0,R)$, that
\begin{equation}\label{MorreyforBMO}
\|\tilde{v}\|_{\LL^\infty(B(x_0,R),\R^N)}\leq cR^{1-\frac{n}{q}}\|\nabla\tilde{v}\|_{\LL^q(B(x_0,R),\R^N)}=cR\left(\underset{B(x_0,R)}{\mint}|\nabla\tilde{v}|^q\dy\right)^\frac{1}{q}
\end{equation}
for some $c=c(n,q,\varsigma)>0$.

All the previous inequalities brought together imply that
\begin{equation}\label{rhonablav}
\left|\underset{B(x,s)}{\mint}|\rho\nabla \tilde{v}-(\rho\nabla \tilde{v})_{x,s}|\dy\right|\leq c\underset{B(x,s)}{\mint}|\nabla \tilde{v}-(\nabla \tilde{v})_{x,s}|\dy+c\left(\underset{B(x_0,R)}{\mint}|\nabla\tilde{v}|^q\dy\right)^\frac{1}{q}.
\end{equation}

On the other hand, following an analogous argument to the one above, we deduce
\begin{align}\label{nablarhov}
&\left|\underset{B(x,s)}{\mint}|\nabla\rho\otimes\tilde{v}-(\nabla\rho\otimes \tilde{v})_{x,s}|\dy-|(\tilde{v})_{x,s}| \underset{B(x,s)}{\mint}|\nabla\rho-(\nabla\rho)_{x,s}|\dy\right|\nonumber\\
\leq& \,2\|\nabla\rho\|_{\LL^\infty}\underset{B(x,s)}{\mint}|\tilde{v}-(\tilde{v})_{x,s}|\dy\leq \,\frac{2}{(1-\varsigma)R}\underset{B(x,s)}{\mint}|\tilde{v}-(\tilde{v})_{x,s}|\dy\nonumber\\
\leq&\,\frac{4}{(1-\varsigma)R}\|\tilde{v}\|_{\LL^\infty(B(x,s),\R^N)}.
\end{align}
It is also easily seen that
\begin{equation}\label{afternablarhov}
|(\tilde{v})_{x,s}| \underset{B(x,s)}{\mint}|\nabla\rho-(\nabla\rho)_{x,s}|\dy\leq \frac{2}{(1-\varsigma)R}\|\tilde{v}\|_{\LL^\infty(B(x,s),\R^N)}\leq c\left(\underset{B(x_0,R)}{\mint}|\nabla\tilde{v}|^q\dy\right)^\frac{1}{q},
\end{equation}
where the last inequality is also a consequence of (\ref{MorreyforBMO}).

Inequality (\ref{Claim1forBMOPhi}) now follows directly from inequalities (\ref{rhonablav})-(\ref{afternablarhov}). 

To conclude the proof of the Lemma, we treat separately the two possible scenarios under consideration. 

\begin{itemize}
\item[(i)] Assume first that, for some $w\in\W^{1,q}_\Gamma(B^+(x_0,R),\R^N)$, $v\colon B^+(x_0,R)\rightarrow\R^N$ takes the form
\begin{equation*}
v(y):= w(y)-y_n(\nabla_\mathrm{n}w)_{x_0,R}.
\end{equation*}

Then, we apply the already proved inequality (\ref{Claim1forBMOPhi}) to the function $\tilde{v}$, to conclude that
\begin{align*}
\underset{B(x,s)}{\mint}|\nabla \varphi-(\nabla\varphi)_{x,s}|\dy\leq & c\underset{B(x,s)}{\mint}|\nabla \tilde{v}-(\nabla \tilde{v})_{x,s}|\dy+c\left(\underset{B(x_0,R)}{\mint}|\nabla\tilde{v}|^q\dy\right)^\frac{1}{q}.\nonumber
\end{align*}
Observe further that, since $w=0$ on $\Gamma$, we can also consider its corresponding extension $\tilde{w}$ defined on $B(x_0,R)$.  We now use triangle inequality followed by applying Lemma \ref{Lemmitaaveragesgoto0} to $\tilde{w}$ and with $p=q$ to estimate
\begin{align*}
\left(\underset{B(x_0,R)}{\mint}|\nabla\tilde{v}|^q\dy\right)^\frac{1}{q}\leq& c\left(\underset{B(x_0,R)}{\mint}|\nabla\tilde{v}-(\nabla\tilde{v})_{x_0,R}|^q\dy\right)^\frac{1}{q}+c\sum_{i=0}^{n}|(\nabla_\mathrm{i}\tilde{v})_{x_0,R}|\notag\\
\leq& c\left(\underset{B(x_0,R)}{\mint}|\nabla\tilde{v}-(\nabla\tilde{v})_{x_0,R}|^q\dy\right)^\frac{1}{q}+c{n}\left(\underset{B(x_0,R)}{\mint}|\nabla_\mathrm{n}\tilde{v}|^q\dy\right)^\frac{1}{q}\notag\\
=&c\left(\underset{B(x_0,R)}{\mint}|\nabla\tilde{v}-(\nabla\tilde{v})_{x_0,R}|^q\dy\right)^\frac{1}{q}+c{n}\left(\underset{B(x_0,R)}{\mint}|\nabla_\mathrm{n}\tilde{w}-(\nabla_\mathrm{n}\tilde{w})_{x_0,R}|^q\dy\right)^\frac{1}{q}\notag\\
=&c\left(\underset{B(x_0,R)}{\mint}|\nabla\tilde{v}-(\nabla\tilde{v})_{x_0,R}|^q\dy\right)^\frac{1}{q}+c{n}\left(\underset{B(x_0,R)}{\mint}|\nabla_\mathrm{n}\tilde{v}-(\nabla_\mathrm{n}\tilde{v})_{x_0,R}|^q\dy\right)^\frac{1}{q}\notag\\
 \leq &c\left(\underset{B(x_0,R)}{\mint}|\nabla\tilde{v}-(\nabla\tilde{v})_{x_0,R}|^q\dy\right)^\frac{1}{q}.
\end{align*}
This concludes the proof of part (i) of the Lemma. 
\item[(ii)] If we now assume that $\tilde{v}(y):= w(y)-b(y)$ with $w$ and $b$ as in the statement of the Lemma, then clearly $(\tilde{v})_{x_0,R}=0$ and, therefore, the conclusion of the result follows immediately from inequality (\ref{Claim1forBMOPhi}).
\end{itemize}
\end{proof}

We also remark that these estimates will enable us to  make use of the corresponding $\mathrm{BMO}$-local minimality to prove the following decay rates, from which Theorem \ref{BdryRegularityTheorem} (b) will be obtained.

The following propositions are well known to provide the precise decay rate needed to obtain regularity on the set $\Omega_0\cup \Sigma_0$, once we have applied Corollary \ref{corollequivnormaldec}. The strategy that we follow is largely inspired by the regularity proof in \cite{Kristensen}.

\newpage
\begin{proposition}[Decay estimate for boundary points]\label{PropBoundaryDecay}
Let $\Omega= B^+$ and $g\in C^{1,\alpha}(\overline{ \Omega},\R^N)$. Assume that $F\colon\overline{\Omega}\times\R^{N\times n}\rightarrow\R$ and $\overline{u}\in\W^{1,p}_g(\Omega,\R^N)$ are as in Theorem \ref{BdryRegularityTheorem}. For $\delta\in (0,1)$, $x\in  \Gamma$ and $r<1-|x|$, denote
\begin{equation}\label{excessboundary}
E(x,r):=\underset{B^+(x,r)}{\mint}\left|V\left(\nabla \overline{u}-(\nabla_\mathrm{n}\overline{u})_{x,r}\otimes \mathrm{e_n}\right) \right|^2 \dy+r^{2\delta\alpha}.
\end{equation} 
Then, for every $m > 0$ there exists $C = C(m) > 0$ with the property
that, for each $\tau\in  (0, \frac{1}{2} )$ and $\delta\in (0,1)$, there exists $\varepsilon = \varepsilon(m, \tau, \delta ) > 0$ such that, if $x\in \Gamma$, $|(\nabla_\mathrm{n} \overline{u})_{x,r}|\leq m$ and $E(x,r)<\varepsilon$, then 
\begin{equation*}
E(x,\tau r)<C\tau^{2\delta\alpha}E(x,r). 
\end{equation*}

\end{proposition}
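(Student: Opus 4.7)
The plan is to argue by contradiction via a blow-up at the boundary. Suppose the conclusion fails: then for some fixed $m,\tau,\delta$ there exist sequences $(x_j)\subseteq\Gamma$, $(r_j)\subseteq(0,1)$ with $|(\nabla_\mathrm{n}\overline{u})_{x_j,r_j}|\leq m$ and $\lambda_j^2:=E(x_j,r_j)\to 0$, but $E(x_j,\tau r_j)\geq C\tau^{2\delta\alpha}\lambda_j^2$ for arbitrarily large $C$. Passing to subsequences, set $\xi_j:=(\nabla_\mathrm{n}\overline{u})_{x_j,r_j}\otimes\mathrm{e_n}$, so that $|\xi_j|\leq m$, and assume $\xi_j\to\xi_0$, $x_j\to x_0\in\overline{\Gamma}$. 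Note that $r_j^{2\delta\alpha}\leq\lambda_j^2$, so $r_j^{\alpha}/\lambda_j\to 0$. We define the blow-up
\begin{equation*}
v_j(y):=\frac{\overline{u}(x_j+r_j y)-g(x_j+r_j y)-r_j y_n\,(\nabla_\mathrm{n}\overline{u})_{x_j,r_j}}{\lambda_j r_j},\qquad y\in B^+,
\end{equation*}
so that $v_j$ vanishes on the flat part of $\partial B^+$ and $\lambda_j\nabla v_j(y)=\nabla\overline{u}(x_j+r_j y)-\nabla g(x_j+r_j y)-\xi_j$. By the definition of $E(x_j,r_j)$ and Lemma~\ref{lemmapropsV}(ix), $(\nabla v_j)$ is bounded in $\LL^2(B^+,\R^{N\times n})$ and $\bigl(\lambda_j^{-2}|V(\lambda_j\nabla v_j)|^2\bigr)$ is bounded in $\LL^1(B^+)$.

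A Caccioppoli-type inequality for the shifted integrands $F_j,\tilde F_j,G_j$ of Lemma~\ref{LemmaGrowths0}, combined with $(\overline{\mathrm{H}}2)$, $(\mathrm{HC})$ and the extremality of $\overline{u}$ expressed through $\tilde F_j$, upgrades this to a uniform $\W^{1,2}$-bound on every relatively compact half-ball $B^+(0,\rho)$, $\rho<1$. Extracting a further subsequence, $v_j\rightharpoonup v$ in $\W^{1,2}_{\mathrm{loc}}$ with $v=0$ on $\Gamma$. Passing to the limit in the Euler--Lagrange equation satisfied by $v_j$ (rewritten via $G_j$ and using continuity of $F_{zz}$ together with estimate (iii) of Lemma~\ref{LemmaGrowths0}) shows that $v$ is a weak solution of the constant-coefficient linear system
\begin{equation*}
-\mathrm{div}\bigl(F_{zz}(x_0,\xi_0+\nabla g(x_0))[\nabla v]\bigr)=0\quad\text{in }B^+,\qquad v=0\text{ on }\Gamma,
\end{equation*}
which is Legendre--Hadamard elliptic by virtue of $(\overline{\mathrm{H}}2)$.

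The main obstacle is upgrading weak convergence to strong convergence $\nabla v_j\to\nabla v$ in $\LL^2_{\mathrm{loc}}(B^+,\R^{N\times n})$, since only local (not global) minimality of $\overline{u}$ is available. We follow the strategy of Kristensen--Taheri \cite{Kristensen}: test the local minimality inequality for $\overline{u}$ against a competitor of the form $\overline{u}+\lambda_j r_j\,\rho\,(v_j-v)$, where $\rho$ is a smooth cut-off supported in a smaller half-ball and a small linear correction in $y_n$ is added so that the variation is admissible on $\partial B^+$. The strong quasiconvexity of $G_j$ then controls $\int \rho^2\,|\nabla(v_j-v)|^2\dy$ from above by error terms that, thanks to estimates (i)--(iv) of Lemma~\ref{LemmaGrowths0}, equiintegrability of $|V(\lambda_j\nabla v_j)|^2/\lambda_j^2$ and $r_j^{\alpha}/\lambda_j\to 0$, vanish in the limit. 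Verifying admissibility of the competitor is the most delicate point: in case~(a), Morrey/Sobolev embedding and the $\LL^q$-bound on $\nabla\overline{u}$ bound the $\W^{1,q}$-norm of the variation; in case~(b), Lemma~\ref{LemmaBMOforPhi} combined with the VMO-property of $\nabla\overline{u}$ controls its BMO-seminorm, guaranteeing admissibility for all large $j$.

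With strong convergence in hand, classical boundary $C^{1,\beta}$-regularity for constant-coefficient Legendre--Hadamard systems with zero Dirichlet data on $\Gamma$ yields, after applying Corollary~\ref{corollequivnormaldec},
\begin{equation*}
\underset{B^+(0,\tau)}{\mint}\bigl|\nabla v-(\nabla_\mathrm{n}v)_{0,\tau}\otimes\mathrm{e_n}\bigr|^2\dy\leq C_0\,\tau^{2}\underset{B^+}{\mint}|\nabla v|^2\dy.
\end{equation*}
Passing to the limit $j\to\infty$ via strong $\LL^2$-convergence and Lemma~\ref{lemmapropsV}(ix), and using $(\tau r_j)^{2\delta\alpha}\leq\tau^{2\delta\alpha}\lambda_j^2$ together with $\tau^2\leq\tau^{2\delta\alpha}$, we obtain $E(x_j,\tau r_j)\leq(C_0+1+o(1))\tau^{2\delta\alpha}\lambda_j^2$, contradicting the assumption as soon as $C>C_0+1$. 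This completes the proof.
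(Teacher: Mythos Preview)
Your outline is broadly correct in its architecture (blow-up, linearisation, strong convergence, linear decay, contradiction), and the admissibility checks you sketch for cases (a) and (b) are the right ingredients. However, the strong-convergence step contains a genuine gap, and what you describe is \emph{not} the Kristensen--Taheri strategy.

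You propose to test the local minimality against a competitor of the form $\overline{u}+\lambda_j r_j\,\rho\,(v_j-v)$ and then invoke strong quasiconvexity of $G_j$ to control $\int \rho^2|\nabla(v_j-v)|^2$. This is the Evans-type argument for \emph{global} minimizers, and it does not go through here for two reasons. First, strong quasiconvexity yields a lower bound for $\int G_j(z_0+\nabla\varphi)-G_j(z_0)$ only when $z_0$ is a \emph{constant} matrix; since $\nabla v$ is not constant, you cannot freeze at $\nabla v$ and still use quasiconvexity to produce $\int|\nabla(v_j-v)|^2$ on the left. Second, you invoke ``equiintegrability of $|V(\lambda_j\nabla v_j)|^2/\lambda_j^2$'' as an input, but only $L^1$-boundedness is available from the definition of $\lambda_j$; equiintegrability is essentially equivalent to the strong convergence you are trying to prove, so this step is circular.

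The actual Kristensen--Taheri mechanism, which the paper implements, is different in kind. One tests with $\varphi_j=\rho(u_j-a_j)$ where $a_j$ is \emph{affine} (namely $a_j(y)=y_n(\nabla_\mathrm{n}u_j)_{x_0,r}$), obtains a Caccioppoli inequality, and applies Widman's hole-filling trick to get a factor $\theta\in(0,1)$ on the right. One then does \emph{not} try to show the right-hand side vanishes. Instead, one passes to the weak-$*$ limit of the measures $(|\nabla u_j|^2+\lambda_j^{p-2}|\nabla u_j|^p)\mathcal{L}^n\overset{*}{\rightharpoonup}\mu$, sets $\nu:=\mu-|\nabla u|^2\mathcal{L}^n$, and shows that the Caccioppoli inequality survives in the limit as
\[
\nu(B[x_0,\varsigma r])\leq \theta\,\nu(B[x_0,r])+\left(\frac{\varepsilon_1(r)}{(1-\varsigma)^2}+\varepsilon_2(r)(1-\varsigma^n)+\varepsilon_3(r)\right)r^n,
\]
with $\varepsilon_1,\varepsilon_3\to 0$ as $r\to 0$ (here the $C^1$-regularity of the $A$-harmonic limit $u$ and Corollary~\ref{corollequivnormaldec} are used). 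A short density argument then forces $\liminf_{r\to 0}\nu(B[x_0,r])/r^n=0$ at every point, and Vitali's covering theorem gives $\nu\measurerestr B=0$, i.e.\ strong convergence. This measure-theoretic iteration with $\theta<1$ is the essential idea you are missing; without it, the passage from local minimality to strong convergence of the blow-up sequence is not justified.
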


\begin{proposition} [Decay estimate for interior points]\label{PropInteriorDecay}
Let $\Omega$, $g\in C^{1,\alpha}(\overline{ \Omega},\R^N)$,  $\overline{u}\in\W^{1,p}_g(\Omega,\R^N)$ and $F\colon\overline{\Omega}\times\R^{N\times n}\rightarrow\R$ as in Theorem \ref{BdryRegularityTheorem}. For $\delta\in (0,1)$ and $B(x,r)\subseteq \Omega$  denote
\begin{equation}
E(x,r):=\underset{B(x,r)}{\mint}\left|V \left(     \nabla\overline{u}-(\nabla\overline{u})_{x,r}      \right)\right|^2\dy+r^{2\delta\alpha} .
\end{equation} 
Then, for every $m > 0$ there exists $C = C(m) > 0$ with the property
that, for each $\tau\in  (0, \frac{1}{2} )$ and $\delta\in (0,1)$, there exists $\varepsilon = \varepsilon(m, \tau,\delta ) > 0$ such that, if $B(x,r)\subseteq\Omega$, $|(\nabla \overline{u})_{x,r}|\leq m$ and $E(x,r)<\varepsilon$, then 
\begin{equation*}
E(x,\tau r)<C\tau^{2\delta\alpha}E(x,r). 
\end{equation*}
\end{proposition}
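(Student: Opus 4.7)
The plan is to argue by contradiction via a blow-up procedure, following the general philosophy of Evans and the adaptation to strong local minimizers in \cite{Kristensen}. Suppose the conclusion fails for some $m, \tau, \delta$ and an absolute constant $C$ to be fixed later: then there exist sequences $x_j \in \Omega$, $r_j \searrow 0$ with $B(x_j,r_j) \Subset \Omega$, $\xi_j := (\nabla \overline u)_{x_j,r_j}$ satisfying $|\xi_j| \le m$, $\lambda_j^2 := E(x_j,r_j) \to 0$, and yet $E(x_j,\tau r_j) \ge C \tau^{2\delta\alpha}\lambda_j^2$. Passing to subsequences, $x_j \to x_0 \in \overline{\Omega}$ and $\xi_j \to \xi_\infty$ with $|\xi_\infty| \le m$. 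Note the definition of $E$ forces $r_j^{\delta\alpha} \le \lambda_j$, which is exactly the bound on $r_j^\alpha/\lambda_j$ required to apply Lemma \ref{LemmaGrowths0}. I would then define the blown-up sequence
\begin{equation*}
v_j(y) := \frac{\overline u(x_j + r_j y) - (\overline u)_{x_j,r_j} - r_j\, \xi_j\, y}{\lambda_j r_j}, \qquad y \in B,
\end{equation*}
so that $\nabla v_j(y) = \lambda_j^{-1}\bigl(\nabla \overline u(x_j + r_j y) - \xi_j\bigr)$, $(v_j)_B = 0$, and Lemma \ref{lemmapropsV}\,(ix) together with the bound $E(x_j,r_j) = \lambda_j^2$ shows that $(v_j)$ is bounded in $\Wrm^{1,2}(B,\R^N)$. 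Extracting a further subsequence, $v_j \rightharpoonup v$ in $\Wrm^{1,2}$.

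The next step is to identify the limit. Using the $\Wrm^{1,q}$-local minimality (or $\mathrm{BMO}$-local minimality) of $\overline u$, one rewrites the first variation in terms of $\tilde F_j$ from Lemma \ref{LemmaGrowths0}, and passes to the limit in the weak Euler-Lagrange equation for $v_j$. Lemma \ref{LemmaGrowths0}\,(i)--(ii), together with the elementary estimate $\lambda_j^{p-2}|\nabla v_j|^{p-1} \to 0$ obtained via the truncation trick (below), shows that $v$ is a weak solution of the constant-coefficient linear elliptic system
\begin{equation*}
-\mathrm{div}\bigl(F_{zz}(x_0,\xi_\infty)\,\nabla v\bigr) = 0 \quad\text{in } B,
\end{equation*}
which is Legendre-Hadamard elliptic thanks to $(\overline{\mathrm{H}}2)$. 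By standard Campanato regularity for such systems, $v \in C^\infty(B,\R^N)$ and satisfies a decay estimate of the form $\dashint_{B(0,2\tau)} |\nabla v - (\nabla v)_{0,2\tau}|^2 \le c_* \tau^2 \dashint_B |\nabla v|^2$, where $c_* = c_*(m)$ depends only on the ellipticity constants derived from $m$.

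The heart of the argument, and the main obstacle, is to upgrade weak convergence to strong convergence of $\nabla v_j$ to $\nabla v$ on $B(0,\tau)$, specifically in the sense that $\lambda_j^{-2}|V(\lambda_j \nabla v_j)|^2 \to |\nabla v|^2$ strongly in $L^1_{\mathrm{loc}}(B)$. This is where the difference between absolute minimizers and strong/BMO-local minimizers manifests itself. I would follow Kristensen--Taheri's modification: construct test variations $\varphi_j := \rho (v_j - v_j^{\mathrm{aff}})$ where $\rho$ is a cut-off between $B(0,\tau)$ and $B(0,2\tau)$ and $v_j^{\mathrm{aff}}$ is the affine function matching average value and average gradient of $v_j$ on $B(0,2\tau)$; then truncate at an appropriate level to ensure the resulting variation, when rescaled back to $\overline u$, is admissible ($\Wrm^{1,q}$-small in case (a), $\mathrm{BMO}$-small in case (b), where Lemma \ref{LemmaBMOforPhi}\,(ii) provides exactly the BMO bound needed). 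Inserting $\varphi_j$ into the minimality inequality and exploiting Lemma \ref{LemmaGrowths0}\,(iii)--(iv) together with the strong quasiconvexity lower bound applied to $G_j$ yields a Caccioppoli-type estimate controlling $\dashint_{B(0,\tau)}|V(\lambda_j(\nabla v_j - \nabla v))|^2/\lambda_j^2$ by the oscillation of $v_j$ on $B(0,2\tau)$, which tends to $0$ by Rellich.

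Once strong convergence is established, the decay estimate for $v$ translates into
\begin{equation*}
\dashint_{B(x_j,\tau r_j)} \bigl|V(\nabla \overline u - (\nabla \overline u)_{x_j,\tau r_j})\bigr|^2 \dy \le c_*(m)\, \tau^2\, \lambda_j^2 + o(\lambda_j^2).
\end{equation*}
Combined with $(\tau r_j)^{2\delta\alpha} \le \tau^{2\delta\alpha} r_j^{2\delta\alpha} \le \tau^{2\delta\alpha}\lambda_j^2$ and $\tau^2 \le \tau^{2\delta\alpha}$ (since $\delta\alpha < 1$), this gives $E(x_j,\tau r_j) \le (c_*(m)+1)\tau^{2\delta\alpha}\lambda_j^2$. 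Fixing $C := c_*(m) + 2$ at the outset produces the desired contradiction. The same blueprint proves Proposition \ref{PropBoundaryDecay}, the only essential additional ingredient being the substitution of $(\nabla \overline u)_{x,r}$ by $(\nabla_{\mathrm n}\overline u)_{x,r}\otimes \mathrm{e_n}$ in the definition of $v_j$ (justified by Corollary \ref{corollequivnormaldec}) and the use of Lemma \ref{LemmaBMOforPhi}\,(i) for the boundary BMO cut-off.
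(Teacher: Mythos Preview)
Your overall scaffolding --- contradiction, blow-up $v_j$, weak limit $v$ solving the linearised constant-coefficient system, strong convergence, contradiction via the Campanato decay for $v$ --- matches the paper's. The paper actually writes out the \emph{boundary} case (Proposition \ref{PropBoundaryDecay}) in full and disposes of Proposition \ref{PropInteriorDecay} by pointing to it and to \cite{Kristensen}, so the order in your closing remark is reversed; the interior Caccioppoli you need is precisely Step~2.2 there.

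The substantive gap is in your strong-convergence paragraph. You correctly take $\varphi_j=\rho(v_j-v_j^{\mathrm{aff}})$ with $v_j^{\mathrm{aff}}$ the affine match of $v_j$ itself --- this is forced, since for a \emph{local} minimizer the rescaled test function must be $\Wrm^{1,q}$- or BMO-small, which rules out comparing with the fixed limit $v$. But the inequality this produces (the paper's \eqref{ecthetaint}) is a Caccioppoli of the \emph{first} kind: both sides compare $\nabla v_j$ to \emph{its own} mean $(\nabla v_j)_{x_0,r}$, and the zero-order term is $\int |v_j-b_j|^2$. Neither side involves $\nabla v$, and the right-hand side does \emph{not} tend to zero by Rellich --- $\int|v_j-b_j|^2\to\int|v-b|^2$, which is generically positive. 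So a single application on a fixed pair of balls cannot deliver $\nabla v_j\to\nabla v$ in $L^2_{\mathrm{loc}}$; you have conflated the Evans mechanism (second-kind Caccioppoli, available only for absolute minimizers) with the Kristensen--Taheri setting.

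What \cite{Kristensen} and the paper actually do is use this first-kind Caccioppoli at \emph{every} ball $B(x_0,r)\subset B$, pass to the weak-$*$ limit measure $\mu$ of $(|\nabla v_j|^2+\lambda_j^{p-2}|\nabla v_j|^p)\mathcal{L}^n$, and set $\nu:=\mu-|\nabla v|^2\mathcal{L}^n\ge 0$. Letting $j\to\infty$ in \eqref{ecthetaint} with the $C^1$ regularity of $v$ turns the Caccioppoli into
\[
\nu(B[x_0,\varsigma r])\le \theta\,\nu(B[x_0,r])+\Bigl(\tfrac{\varepsilon_1(r)}{(1-\varsigma)^2}+\varepsilon_2(r)(1-\varsigma^n)+\varepsilon_3(r)\Bigr)r^n,
\]
with $\theta<1$ fixed and $\varepsilon_1,\varepsilon_3\to 0$; an elementary iteration in $r$ then forces $\liminf_{r\to 0}r^{-n}\nu(B[x_0,r])=0$ at every $x_0$, and Vitali's covering theorem gives $\nu\equiv 0$ on each $B(0,\sigma)$, i.e.\ the desired strong convergence. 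This measure-density step is the genuine Kristensen--Taheri innovation you invoke by name but do not carry out; without it the argument does not close.

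One minor point: no truncation is needed to make $\varphi_j$ admissible. In case (a) the $\Wrm^{1,q}$-smallness comes from $r_j\to 0$ together with $\nabla\overline u\in L^q$; in case (b) it is the VMO hypothesis on $\nabla\overline u$, via Lemma \ref{LemmaBMOforPhi}(ii), that drives $[\nabla\varphi_j]_{\mathrm{BMO}}$ below $\delta/\lambda_j$ for large $j$.
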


\begin{proof}[Proof of Proposition \ref{PropBoundaryDecay}]

By considering
\begin{equation*}
\tilde{u}:=\overline{u}-g
\end{equation*}
and $\tilde{F}\colon B^+ \times\R^{N\times n}\rightarrow\R$ given by
\begin{equation*}
\tilde{F}(x,z):=F\left(x,z+\nabla g\right)
\end{equation*} 
we can assume, without loss of generality, that $g=0$, so that $\overline{u}:=0$ on $\Gamma$. Furthermore, under assumptions (b) of Theorem \ref{BdryRegularityTheorem}, we can suppose $\nabla u\in \mathrm{VMO}_0$. 
\\

Assuming that the proposition is false, we can find  $m>0$ such that, for every $C>0$, there are  a corresponding $\tau\in (0,\frac{1}{2})$, a $\delta\in (0,1)$ and a sequence of half balls $B^+(x_j,r_j)\subseteq B^+$ with $x_j\in \Gamma$, such that $|(\nabla\overline{u})_{x_j,r_j}|\leq m$ and $E(x_j,r_j)\rightarrow 0$ but $E(x_j,\tau r_j)>C\tau^2 E(x_j,rj)$. 

We split the proof into several steps in order to obtain a contradiction for suitably large values of $C$. 
\\
\\
\textit{Step 1. The blow up}. We denote
\begin{equation*}
\zeta_j:=(\nabla_\mathrm{n}\overline{u})_{x_j,r_j}=\underset{B^+(x_j,r_j)}{\mint}|\nabla_\mathrm{n}\overline{u}\dx|,\,\,\,\mbox{ }\xi_j:=\zeta_j\otimes\mathrm{e_n},\,\,\,\mathrm{   } \lambda_j:=\sqrt{E(x_j,r_j)}
\end{equation*}
and define the function $u_j\colon B^+\rightarrow\R^{N \times n}$ by
\begin{equation*}
u_j(y):= \frac{\overline{u}(x_j+r_jy)-r_j\zeta_jy_\mathrm{n}}{r_j\lambda_j}.
\end{equation*}
Then, $u_j=0$ on $\Gamma$ and
\begin{equation*}
\nabla u_j(y):= \frac{\nabla\overline{u}(x_j+r_jy)-\xi_j}{\lambda_j}.
\end{equation*}
Additionally, it is clear that
\begin{equation}\label{ujbded}
\underset{B^+}{\mint}\left(|\nabla u_j|^2+\lambda_j^{p-2}|\nabla u_j|^p\right)\dx\leq 1
\end{equation}
and 
\begin{align}\label{eccontr}
&\underset{B^+(0,\tau)}{\mint}\left(|\nabla u_j-(\nabla_\mathrm{n} u_j)_{0,\tau}\otimes\mathrm{e_n} |^2+\lambda_j^{p-2} |\nabla u_j-(\nabla_\mathrm{n} u_j)_{0,\tau}\otimes\mathrm{e_n} |^p \right)\dx+\frac{(\tau r_j)^{2\delta\alpha}}{\lambda_j^2}>C\tau^{2\delta\alpha}.
\end{align}

The Euler-Lagrange equation satisfied by $\overline{u}$ implies that
\begin{equation}\label{ELforAharm}
\frac{1}{\lambda_j}\underset{B^+}{\int}\left(F_{z}(x_j+r_jx,\xi_j+\lambda_j\nabla u_j)- F_{z}(x_j,\xi_j)       \right)[\nabla\varphi]\dx=0
\end{equation}
for every $\varphi\in \W^{1,p}_0(B^+,\R^N)$.

In addition, assumption $(\mathrm{{H}C})_2$ implies that 
\begin{align}\label{convforAharm}
&\frac{1}{\lambda_j}\underset{B^+}{\int}\left(F_{z}(x_j,\xi_j+\lambda_j\nabla u_j)-F_{z}(x_j+r_jx,\xi_j+\lambda_j\nabla u_j)\right)[\nabla\varphi]\dx\notag\\
\leq&c\frac{r_j^\alpha}{\lambda_j}\underset{B^+}{\int}\left(1+|\lambda_j \nabla u_j|^{p-1} \right)|\nabla\varphi|\dx\leq cr_j^{(1-\delta)\alpha} \left(\underset{B^+}{\int}|\nabla\varphi|^p\dx\right)^{\frac{1}{p}} \longrightarrow 0
\end{align}
as $j\rightarrow\infty$. Here we have applied H\"older inequality for the last estimate and we note that $c=c(m,\|\nabla\overline{u}\|_{\LL^p})$.

Furthermore, we let $B_j^{+,\infty}:=\{x\in B^+\st \lambda_j|\nabla u_j|>1 \}$ and $B_j^{+,1}:=B^+-B_j^{+,\infty} $. Then, (\ref{ujbded}) implies that $\mathcal{L}^n(B_j^+)\leq \lambda_j^2\mathcal{L}^n(B^+)$. We now use the growth estimate for $F_z$ that follows from $(\overline{\mathrm{H}}1)-(\overline{\mathrm{H}}2)$, as well as $|\xi_j|\leq m$ and (\ref{ujbded}) together with H\"{o}lder inequality, to infer that, for $\varphi\in C_0^\infty(B^+,\R^N)$,
\begin{align}\label{FTCforAharm>1}
&\frac{1}{\lambda_j}\underset{B_j^{+,\infty}}{\int}\left(F_{z}(x_j,\xi_j+\lambda_j\nabla u_j)-F_{z}(x_j,\xi_j)\right)[\nabla\varphi]\dx\notag\\
\leq & c(p)\|\nabla\varphi\|_{\LL^\infty}\left( (1+m^{p-1})\mathcal{L}^n(B^+)+ \mathcal{L}^n(B^+) \right)\lambda_j.
\end{align}
On the other hand, 
\begin{align}\label{FTCforAharm<1}
&\frac{1}{\lambda_j}\underset{B_j^{+,1}}{\int}\left(F_{z}(x_j,\xi_j+\lambda_j\nabla u_j)-F_{z}(x_j,\xi_j)\right)[\nabla\varphi]\dx\notag\\
=&\underset{B_j^{+,1}}{\int}\int_0^1\left(F_{zz}(x_j,\xi_j+t\lambda_j\nabla u_j)-F_{zz}(x_j,\xi_j)\right)[\nabla u_j,\nabla\varphi]\dt\dx +\underset{B_j^{+,1}}{\int}F_{zz}(x_j,\xi_j)[\nabla u_j,\nabla\varphi]\dx\notag\\
=:&\,\mathrm{I+II}.
\end{align}
Observe that $\mathbbm{1}_{B_j^{+,1}}\rightarrow \mathbbm{1}_{B^+}$ in $L^1$. In addition, for subsequences that we do not relabel, we can assume that $u_j\rightharpoonup u$ in $\W^{1,2}(B^+,\R^N)$, $x_j\rightarrow x_0$ and $\xi_j\rightarrow \xi$ for some $u\in \W^{1,2}(B^+,\R^N)$ $x_0\in\Gamma$ and $\xi\in\R^{N\times n}$. Whereby,
\begin{equation}\label{IIto0forAharm}
\mathrm{II}\longrightarrow \underset{B^+}{\int}F_{zz}(x_0,\xi)[\nabla u,\nabla \varphi]\dx
\end{equation}
 as $j\rightarrow\infty$. 
On the other hand, observe that $\lambda_j^2\underset{B^+}{\int}|\nabla u_j|^2 \dx\rightarrow 0$. Hence, $\lambda_j\nabla u_j\rightarrow 0$ in measure. Given that $F_{zz}$ is continuous, we infer that $\mathbbm{1}_{ B_j^{+,1}}\left(F_{zz}(x_j,\xi_j+t\lambda_j\nabla u_j)-F_{zz}(x_j,\xi_j)\right)\rightarrow 0$ in measure for every $t\in[0,1]$. Additionally, this sequence is uniformly bounded in $B^+$. Therefore, Vitali's Convergence Theorem implies, for every $\varphi\in C^\infty_0(B^+,\R^N)$, that
\begin{align}\label{Ito0forAharm}
\mathrm{|I|}=&\left|\underset{B_j^{+,1}}{\int}\int_0^1\left(F_{zz}(x_j,\xi_j+t\lambda_j\nabla u_j)-F_{zz}(x_j,\xi_j)\right)[\nabla u_j,\nabla\varphi]\dt\dx\right|\notag\\
\leq& \underset{B_j^{+,1}}{\int}\int_0^1\left|\left(F_{zz}(x_j,\xi_j+t\lambda_j\nabla u_j)-F_{zz}(x_j,\xi_j)\right)[\nabla u_j,\nabla\varphi]\right|\dt\dx\notag\\
\leq& \left(\underset{B_j^{+,1}}{\int}\int_0^1\left|F_{zz}(x_j,\xi_j+t\lambda_j\nabla u_j)-F_{zz}(x_j,\xi_j)\right|^2\dt\dx \right)^{\frac{1}{2}}\|\nabla u_j\|_{\LL^2}\|\nabla\varphi\|_{\LL^\infty}\longrightarrow 0.
\end{align}
Inequalities (\ref{ELforAharm})-(\ref{Ito0forAharm}) enable us to conclude, after an approximation argument, that for every $\varphi\in\W^{1,2}_0(B^+,\R^N)$,
\begin{equation*}
\underset{B^+}{\int}F_{zz}(x_0,\xi)[\nabla u,\nabla \varphi]\dx=0.
\end{equation*}
This implies, by a classical regularity result due to Campanato,\footnote{See \cite[Teorema 9.2]{Campanatoeqelip}, \cite[Theorem 2.4]{Grotowskinonlin}, \cite[Proposition 4.2]{Kristensen}.} that $u$ is $C^1$ and that there exists a constant $\gamma_0>0$ such that, for every $0<r\leq\frac{1}{2}$, 
\begin{equation}\label{ec2forcont}
\underset{B^+(0,r)}{\mint}|\nabla u - (\nabla u)_{0,r}|^2\dx\leq \gamma_0 r^2. 
\end{equation}
\\
\\
\textit{Step 2.1. Caccioppoli inequality for boundary points.}
\\
\\
For given $x_0\in\Gamma$ and $r>0$ satisfying $B^+(x_0,r)\subseteq  B^+$ we define the affine function
\begin{equation*}
a_j(y):=y_n (\nabla_\mathrm{n}u_j)_{x_0,r}.
\end{equation*}

In this step we will show that there exists $\theta\in (0,1)$ independent of $B^+(x_0,r) $ such that, for every $\varsigma\in (0,1)$, 
\begin{align}\label{ecthetabdry}
&\underset{B^+(x_0,\varsigma r)}{\int} \left(|\nabla u_j - (\nabla_\mathrm{n}u_j)_{x_0,\varsigma r}\otimes \mathrm{e}_\mathrm{n}|^2+\lambda_j^{p-2} |\nabla u_j - (\nabla_\mathrm{n}u_j)_{x_0,\varsigma r}\otimes \mathrm{e}_\mathrm{n}|^p \right)\dx\notag\\
\leq& \theta\underset{B^+(x_0,r)}{\int}\left(|\nabla u_j - (\nabla_\mathrm{n}u_j)_{x_0, r}\otimes \mathrm{e}_\mathrm{n}|^2+\lambda_j^{p-2} |\nabla u_j - (\nabla_\mathrm{n}u_j)_{x_0, r}\otimes \mathrm{e}_\mathrm{n}|^p\right)\dx\notag\\
&+\theta\underset{B^+(x_0,r)}{\int}\left(\frac{|u_j-a_j|^2}{(1-\varsigma)^2r^2}+\lambda_j^{p-2}\frac{|u_j-a_j|^p}{(1-\varsigma)^pr^p}\right)\dx\notag\\
&+\theta r^n(1-\varsigma^n)\left(\left|(\nabla_\mathrm{n}u_j)_{x_0,r}\otimes \mathrm{e_n} \right|^2+ \lambda_j^{p-2} \left|(\nabla_\mathrm{n}u_j)_{x_0,r}\otimes \mathrm{e_n} \right|^p   \right)+\frac{r_j^{(1-\delta)\alpha}}{1-\varsigma}
\end{align}
holds for every $j\geq J=J(\varsigma,r)$, where $J(\varsigma,r)$ is allowed to depend on $\varsigma$ and $r$. 

Consider the functionals $F_j$, $\tilde{F}_j$ and $G_j$ as defined in Lemma \ref{LemmaGrowths0} and assume that $\rho$ is a cut-off function satisfying $\mathbbm{1}_{B(x_0,\varsigma r)}\leq\rho\leq \mathbbm{1}_{B(x_0, r)}$ and $|\nabla\rho|\leq \frac{1}{(1-\varsigma)r}$.

Define 
\begin{equation}
\varphi_j:=\rho(u_j-a_j);\mbox{\hspace{7mm}} \psi_j:= (1-\rho)(u_j-a_j).
\end{equation}

Then, the strong quasiconvexity condition ($\mathrm{\overline{H}2}$) implies for $G_j$ that
\begin{align*}
&\underset{B^+(x_0,\varsigma r)}{\int}\left(|\nabla u_j-\nabla a_j|^2+\lambda_j^{p-2}|\nabla u_j-\nabla a_j|^p\right)\dx\notag\\
\leq&\underset{B^+(x_0,r)}{\int}\left(|\nabla\varphi_j|^2+\lambda_j^{p-2}|\nabla\varphi_j|^p\right)\dx\notag\\
\leq&\,c\underset{B^+(x_0,r)}{\int}\left(G_j(\nabla a_j+\nabla \varphi_j)-G_j(\nabla a_j)\right)\dx\notag\\
=&\,c\underset{B^+(x_0,r)}{\int}\left(G_j(\nabla a_j+\nabla \varphi_j)-F_j(x,\nabla a_j+\nabla\varphi_j)     \right)\dx\notag\\
&+c\underset{B^+(x_0,r)}{\int}\left(F_j(x,\nabla u_j- \nabla\psi_j) -F_j(x,\nabla u_j)\right)\dx+c \underset{B^+(x_0,r)}{\int}\hspace{-2mm}\left(   F_j(x,\nabla u_j)-\tilde{F}_j(\nabla x,\nabla u_j)  \right)\dx\notag\\
&+c\underset{B^+(x_0,r)}{\int}\hspace{-2mm}\left(   \tilde{F}_j(x,\nabla u_j)- \tilde{F}_j(x,\nabla a_j)  \right)\dx +c\underset{B^+(x_0,r)}{\int}\hspace{-2mm}\left( \tilde{F}_j(x,\nabla a_j) -F_j(x,\nabla a_j) \right)\dx   \notag\\
&+c\underset{B^+(x_0,r)}{\int}\hspace{-2mm}\left( {F}_j(x,\nabla a_j) -G_j(\nabla a_j)\right)\dx\notag\\
=:&\,\hspace{3mm} \mathrm{I+II+III+IV+V+VI}.
\end{align*}
We estimate each term separately.
Since $u_j\rightharpoonup u$ in $\W^{1,2}$, $\nabla a_j$ is a convergent sequence and, therefore, bounded. Hence, by Lemma \ref{LemmaGrowths0} (iii), 
\begin{align*}
\mathrm{I+VI}\leq &cr_j^{\alpha(1-\delta)}   \underset{B^+(x_0,r)}{\int}\hspace{-2mm}\left(1+|\nabla u_j|^2 +|\nabla\psi_j|^2 +\lambda_j^{p-2}|\nabla u_j|^p+ \lambda_j^{p-2}|\nabla \psi_j|^p   \right)\dx\notag\\
\leq& cr_j^{\alpha(1-\delta)} \left(1+  \underset{B^+(x_0,r)}{\int}\hspace{-2mm}\left( |\nabla\psi_j|^2 +\lambda_j^{p-2}|\nabla \psi_j|^p   \right)\dx\right),
\end{align*}
where the second inequality follows from (\ref{ujbded}).

Similarly, we have
\begin{align*}
\mathrm{III+V}\leq cr_j^{\alpha(1-\delta)}\underset{B^+(x_0,r)}{\int}\hspace{-2mm}\left(1+|\nabla u_j|^2 +|\nabla a_j|^2  \right)\dx\leq cr_j^{\alpha(1-\delta)}.
\end{align*}
We now estimate IV, for which the minimality of $\overline{u}$ is crucial; it implies that $u_j$ is either
\begin{itemize}
\item[(a)] a $\W^{1,q}$-local minimizer of $I_j(u):= \underset{B^+}{\int}\tilde{F_j}(\nabla u)\dx$, under assumptions (a) of the theorem or
\item[(b)]  a $\mathrm{BMO}$-local minimizer of $I_j$.
\end{itemize} 
Hence, Lemma \ref{LemmaGrowths0} (ii) enables us to conclude that 
\begin{align}\label{estimateIV}
\mathrm{IV}\leq& \underset{B^+(x_0,r)}{\int}\hspace{-2mm}\left(   \tilde{F}_j(x,\nabla\psi_j+\nabla a_j)- \tilde{F}_j(x,\nabla a_j)  \right)\dx\notag\\
\leq&  c\underset{B^+(x_0,r)}{\int}\hspace{-2mm}\left( r_j^{\alpha(1-\delta)}  +  |\nabla\psi_j|+|\nabla a_j|+\lambda_j^{p-2}(|\nabla\psi_j|^ {p-1}+|\nabla a_j|^{p-1})  \right)|\nabla\psi_j|\dx,
\end{align}
provided 
\begin{itemize}
\item[(a)] $\|\nabla\varphi_j\|_{\LL^q(B(x_0,r))}<\frac{\delta}{\lambda_jr_j^{\frac{n}{q}}}$ for the case of $\W^{1,q}$-local minimizers or, respectively,
\item[(b)] $[\nabla\varphi_j]_{\mathrm{BMO}(B(x_0,r),\R^N)}<\frac{\delta}{\lambda_j}$ for the case of $\mathrm{BMO}$-local minimizers.
\end{itemize}
Here, $\varphi_j$ is assumed to take the value of $0$ in $B-B^+(x_0,r)$. We verify this smallness property in each case separately:
\begin{itemize}
\item[(a)] It follows by triangle and Poincar\'e inequalities that
\begin{equation*}
\|\nabla \varphi_j\|_{{\LL^q(B(x_0,r),\R^{N\times n})} } \leq \left( 1+\frac{c}{1-\varsigma} \right)\|\nabla u_j-\nabla a_j\|_{{\LL^q(B^+(x_0,r),\R^{N\times n})} }
\end{equation*}
for some constant $c>0$ depending on $n,N$ and $q$. 
We can then change coordinates and use Lemma \ref{quasiminimality}, together with the inclusion $B(x_j+r_jx_0,rr_j)\subseteq B(x_j,r_j)$ and Cauchy-Schwarz inequality to obtain that 
\begin{equation*}
\|\nabla u_j-\nabla a_j\|_{\LL^q(B^+(x_0,r),\R^{N\times n})}^q\leq \frac{2^q}{\lambda_j^qr_j^n}\underset{B^+(x_j,r_j)} {\int}|\nabla \overline{u}|^q\dx.
\end{equation*}
Since $\nabla\overline{u}\in\LL^q$, the previous inequalities imply that there is some $J=J(\varsigma,r)$ such that, for $j\geq J$, 
\begin{equation*}
\|\nabla \varphi_j\|_{{\LL^q(B(x_0,r),\R^{N\times n})} }\leq \frac{c}{\lambda_j r_j^\frac{n}{q}(1-\varsigma)}\left(\underset{B(x_j,r_j)}{\int}|\nabla\overline{u}|^q\right)^\frac{1}{q}<\frac{\delta}{\lambda_j r_j^\frac{n}{q}},
\end{equation*}
as we wished to obtain. 
\item[(b)] Regarding the case of $\mathrm{BMO}$-local minimizers, extend first the function $u_j-a_j$ by assigning the value of $0$ in $B-B^+$. We keep the notation $u_j-a_j$ to denote such extension. Then, the definition of $\varphi_j$ enables us to apply Lemma \ref{LemmaBMOforPhi}(i) and conclude that, for any $B(x,s)\subseteq B(x_0,r)$,
\begin{align}\label{ec0forphidelta}
&\underset{B(x,s)}{\mint}|\nabla \varphi_j-(\nabla\varphi_j)_{x,s}|\dy\notag\\
\leq&c\underset{B(x,s)}{\mint}|\nabla u_j-(\nabla u_j)_{x,s}|\dy+c\left(\underset{B(x_0,r)}{\mint}|\nabla u_j-(\nabla u_j)_{x_0,r}|^{n+1}\dy\right)^\frac{1}{n+1}
\end{align}
for some constant $c=c(n,\varsigma)>0$.    
We now observe that, since $|x|\leq 1-s$ because $B(x,s)\subseteq B(0,1)$, then $B(x_j+r_jx,sr_j)\subseteq B(x_j,r_j)$. With this, we infer after a change of variables that, if $\overline{u}$ still denotes the extension of $\overline{u}$ from $B^+$ to $B$ that takes the value of $0$ off $B^+$, then
\begin{align}\label{ec1forphidelta}
\underset{B(x,s)}{\mint}|\nabla u_j-(\nabla u_j)_{x,s}|\dy\leq \frac{1}{\lambda_j}\underset{B(x_j,r_j)}{\mint}|\nabla \overline{u}-(\nabla \overline{u})_{x_j,r_j}|\dy.
\end{align}
Similarly, $B(x_j+r_jx_0,rr_j)\subseteq B(x_j,r_j)$ and, hence,
\begin{align}\label{ec2forphidelta}
\left(\underset{B(x_0,r)}{\mint}|\nabla u_j-(\nabla u_j)_{x_0,r}|^{n+1}\dy\right)^\frac{1}{n+1}\leq \frac{2^\frac{n}{n+1}\|\nabla\overline{u}\|_{\LL^\infty}^{\frac{n}{n+1}}}{\lambda_j}\left(\underset{B(x_j,r_j)}{\mint}|\nabla \overline{u}-(\nabla \overline{u})_{x_j,r_j}|\dy\right)^\frac{1}{n+1}.
\end{align}
Since $\nabla\overline{u}\in \mathrm{VMO}_{0}(B^+,\R^N)$ and $g\in C^{1,\alpha}$, inequalities (\ref{ec0forphidelta})-(\ref{ec2forphidelta}) imply that
\begin{equation}
[\nabla\varphi_j]_{\mathrm{BMO}}\leq \frac{\delta}{\lambda_j}
\end{equation}
for every $j\geq J(n,\varsigma,\|\nabla\overline{u}\|_{\LL^\infty})$ and, hence, estimate (\ref{estimateIV}) remains valid for such large values of $j$. 

\end{itemize}

We further estimate $\mathrm{IV}$ by simplifying the term $r_j^{\alpha(1-\delta)}\underset{B^{+}}{\int}|\nabla\psi_j|\dx$, for which we use the definition of $\psi_j$, Poincar\'e inequality and (\ref{ujbded}). We conclude that

\begin{align*}\label{estimateIVb}
\mathrm{IV}\leq&  c\left( \frac{r_j^{\alpha(1-\delta)}}{1-\varsigma}  + \underset{B^+(x_0,r)}{\int}\hspace{-2mm}\left( |\nabla\psi_j|+|\nabla a_j|+\lambda_j^{p-2}(|\nabla\psi_j|^ {p-1}+|\nabla a_j|^{p-1}) \right)|\nabla\psi_j| \dx\right).
\end{align*}

Finally, we use part (iv) of Lemma \ref{LemmaGrowths0} to estimate
\begin{align*}
\mathrm{II}\leq &\, c \underset{B^+(x_0,r)}{\int}\hspace{-2mm}\left(|\nabla u_j|+|\nabla\psi_j|+\lambda_j^{p-2}\left(|\nabla u_j|^{p-1}+ |\nabla \psi_j|^{p-1}\right)  \right)|\nabla\psi_j|\dx.
\end{align*}

Bringing together the previous estimates and using Lemma \ref{quasiminimality}, that $ab^{p-1}\leq a^p+b^p$ as well as triangle inequality, we obtain 
\begin{align*}
&\underset{B^+(x_0,\varsigma r)}{\int}\left(|\nabla u_j-(\nabla_\mathrm{n} u_j)_{x_0,\varsigma r}\otimes \mathrm{e_n}|^2+\lambda_j^{p-2}(|\nabla u_j-(\nabla_\mathrm{n} u_j)_{x_0,\varsigma r}\otimes \mathrm{e_n}|^p \right)\dx\\
\leq & \,c \underset{B(x_0, r)\backslash B(x_0,\varsigma r)}{\int}\left(|\nabla u_j-(\nabla_\mathrm{n} u_j)_{x_0, r}\otimes \mathrm{e_n}|^2 + \lambda_j^{p-2}|\nabla u_j-(\nabla_\mathrm{n} u_j)_{x_0, r}\otimes \mathrm{e_n}|^p \right) \dx\nonumber\\
&+c\underset{B(x_0, r)}{\int}\left(\frac{1}{(1-\varsigma)^2r^2}|u_j-a_j|^2+\frac{\lambda_j^{p-2}}{(1-\varsigma)^pr^p}|u_j-a_j|^p  \right)\dx\nonumber\\
&+c r^n(1-\varsigma^n)\left(|(\nabla_\mathrm{n} u_j)_{x_0, r}\otimes \mathrm{e_n}|^2+\lambda_j^{p-2}|(\nabla_\mathrm{n} u_j)_{x_0, r}\otimes \mathrm{e_n}|^p\right) + c\frac{r_j^{\alpha(1-\delta)}}{1-\varsigma}.
\end{align*}
Then, the Caccioppoli inequality of the first kind (\ref{ecthetabdry}) follows after applying Widman's hole-filling trick and with $\theta:=c/(1+c)$.
\\
\\
\textit{Step 2.2. Caccioppoli inequality for interior points.}
The derivation of a similar Caccioppoli inequality for interior points follows exactly the same strategy as before, with the main difference being that, if $B(x_0,r)\subseteq B^+$, we can define 
\begin{equation}\label{definPhijinterior}
\varphi_j:=\rho(u_j-b_j);\mbox{\hspace{7mm}} \psi_j:= (1-\rho)(u_j-b_j)
\end{equation}
with $b_j$ an arbitrary affine function. Hence, we fix $B(x_0,r)\subseteq B^+$ and let $b_j(y):= (u_j)_{x_0,r}+(\nabla u_j)_{x_0,r}\cdot(y-x_0)$. By repeating all the steps performed to obtain (\ref{ecthetabdry}), but using part (ii) (instead of (i)) of Lemma \ref{LemmaBMOforPhi} for the case  of $\mathrm{BMO}$-local minimizers, we also establish that, for some $\theta\in (0,1)$ independent of $j, \varsigma$ and $B(x_0,r)$,
\begin{align}\label{ecthetaint}
&\underset{B(x_0,\varsigma r)}{\int}\left(|\nabla u_j-(\nabla u_j)_{x_0,\varsigma r}|^2+\lambda_j^{p-2}(|\nabla u_j-(\nabla u_j)_{x_0,\varsigma r}|^p \right)\dx\notag\\
\leq & \,\theta \underset{B(x_0, r)}{\int}\left(|\nabla u_j-(\nabla u_j)_{x_0, r}|^2 + \lambda_j^{p-2}|\nabla u_j-(\nabla u_j)_{x_0, r}|^p \right) \dx\nonumber\\
&+\theta\underset{B(x_0, r)}{\int}\left(\frac{1}{(1-\varsigma)^2r^2}|u_j-b_j|^2+\frac{\lambda_j^{p-2}}{(1-\varsigma)^pr^p}|u_j-b_j|^p  \right)\dx\nonumber\\
&+\theta r^n(1-\varsigma^n)\left(|(\nabla u_j)_{x_0, r}|^2+\lambda_j^{p-2}|(\nabla u_j)_{x_0, r}|^p\right) + \theta \frac{r_j^{\alpha(1-\delta)}}{1-\varsigma}.
\end{align}
\\
\\
\textit{Step 3. Strong convergence.}

This stage of the proof consists in establishing the strong convergence (up to a subsequence) of $\nabla u_j$ to $\nabla u$ in $B^+(0,\sigma)$. The idea is to consider the extensions of $u_j$ and $u$ to the unit ball $B$ by assigning the value of $0$ in $B-B^+$. Such extensions belong to $\W^{1,2}(B,\R^N)$. We shall then see $|\nabla u_j|^2\mathcal{L}^n$ as a convergent sequence of measures on $B$ and prove that the limit measure coincides with $|\nabla u|^2\mathcal{L}^n$. This strategy was developed by Kristensen and Taheri \cite{Kristensen} to overcome the fact that one cannot obtain a Caccioppoli inequality of the second kind, as in the case of global minimizers, for local minimizers. We reproduce Kristensen-Taheri's proof here while highlighting the  appropriate fine modifications in the argument to treat the boundary points. 

The goal is then to show that for every $\sigma<1$, 
\begin{equation*}
\underset{B^+(0,\sigma)}{\int}\left( |\nabla (u_j-u)|^2+\lambda_j^{p-2}|\nabla (u_j-u)|^p\right)\dx\rightarrow 0 \text{\hspace{3mm} as \hspace{3mm} $j\rightarrow\infty$}.
\end{equation*}

From now on, $u_j$ and $u$ denote the corresponding extensions in $\W^{1,2}(B,\R^N)$ of these functions. Since $(u_j)$ is bounded in $\W^{1,2}$,  there is a further subsequence, that for convenience we do not relabel, and a positive finite Borel measure $\mu$, such that
\begin{equation*}
\left(|\nabla u_j|^2+\lambda_j^{p-2}|\nabla u_j|^p\right)\mathcal{L}^n\overset{*}{\rightharpoonup}\mu
\end{equation*}
in $C^0(\overline{B})^*$.
We note that $u_j\rightharpoonup u$ in $\W^{1,2}(B,\R^N)$ and, for a set $A\subseteq \overline{B}$, let
\begin{equation*}
\nu(A):=\left(\mu-|\nabla u|^2\mathcal{L}^n\right)(A).
\end{equation*} 
It is then clear that $\nu$  is a positive, finite Borel measure on $\overline{B}$. 
The main objective is now to show that 
\begin{equation}\label{nulargerLebmeas}
\underset{r\rightarrow 0^+}{\liminf}\frac{\nu({B[x_0,r]})}{r^n}=0
\end{equation}
for every $x_0\in B$. 

It is to prove this statement that we will make use of the Caccioppoli inequalities obtained. 
We first focus on establishing this limit for the case $x_0\in\Gamma$, given that if $x_0\in B^+$, the proof follows the same argument after some simple modifications. The case $x_0 \in B^-$ is trivial because then $\nu (B[x_0,r])=0$ for $r$ small enough. 

We define the map 
\begin{equation*}
a(y):=\left\{
\begin{array}{ll}
y_n (\nabla_\mathrm{n} u)_{B^+(x_0,r)} &\mbox{ if }y\in B^+\cup \Gamma\\
0 &\mbox{ if }y\in B-B^+.
\end{array}
\right.
\end{equation*}
By weak convergence, it is clear that $a_j\rightarrow a$ in $\LL^2$. Furthermore, since $\lambda_j\rightarrow 0$ as $j\rightarrow\infty$, Rellich-Kondrachov Embedding theorem implies that
\begin{equation*}
\underset{B(x_0,r)}{\int}\left(\frac{|u_j-a_j|^2}{(1-\varsigma)^2r^2}+\lambda_j^{p-2}\frac{|u_j-a_j|^p}{(1-\varsigma)^pr^p}\right)\dx\rightarrow \frac{1}{(1-\varsigma)^2r^2}\underset{B(x_0,r)}{\int}|u-a|^2\dx.
\end{equation*}
On the other hand, observe that for every $\varepsilon>0$ there is a constant $c_\varepsilon>0$ such that
\begin{equation}\label{Auxineqp}
(1-\varepsilon)|\nabla u_j|^p-c_\varepsilon|\nabla a_j|^p\leq|\nabla u_j-\nabla a_j|^p\leq(1+\varepsilon)|\nabla u_j|^p+c_\varepsilon|\nabla a_j|^p.
\end{equation}
In addition, by elementary properties of the convergence of Radon measures (see, for example, \cite[Section 1.9]{EvGar}), for any Borel set $A\subseteq \overline{B}$ we have that
\begin{align*}
\mu(\mathrm{int}\, A)\leq &\underset{j\rightarrow\infty}{\liminf}\underset{A}{\int}\left(|\nabla u_j|^2+\lambda_j^{p-2}|\nabla u_j|^p\right)\dx\notag\\
\leq& \underset{j\rightarrow\infty}{\limsup}\underset{A}{\int}\left(|\nabla u_j|^2+\lambda_j^{p-2}|\nabla u_j|^p\right)\dx\notag\\
\leq& \mu(\overline{A}). 
\end{align*}
It then follows from (\ref{ecthetabdry}) and the auxiliary inequality (\ref{Auxineqp}) that
\begin{align*}
&(1-\varepsilon)\mu(B(x_0,\varsigma r))+\underset{B(x_0,\varsigma r)}{\int}\left( |\nabla a|^2-2\langle\nabla a,\nabla u\rangle \right)\dx\notag\\
\leq& (1-\varepsilon)\underset{j\rightarrow\infty}{\liminf}\underset{B(x_0,\varsigma r)}{\int}\left(|\nabla u_j|^2+\lambda_j^{p-2}|\nabla u_j|^p\right)\dx+\underset{j\rightarrow\infty}{\lim}\underset{B(x_0,\varsigma r)}{\int}\left(|\nabla a_j|^2-2\langle\nabla a_j,\nabla u_j\rangle\right)\dx\notag\\
\leq &\underset{j\rightarrow\infty}{\liminf}\underset{B(x_0,\varsigma r)}{\int}\left(|\nabla u_j-\nabla a_j|^2+\lambda_j^{p-2}|\nabla u_j-\nabla a_j|^p\right)\dx\notag\\
\leq &\underset{j\rightarrow\infty}{\limsup}\underset{B(x_0,\varsigma r)}{\int}\left(|\nabla u_j-\nabla a_j|^2+\lambda_j^{p-2}|\nabla u_j-\nabla a_j|^p\right)\dx\notag\\
\leq& \theta\left( (1+\varepsilon)\mu({B[x_0,r]})+\underset{B(x_0,r)}{\int}\left(|\nabla a|^2-2\langle\nabla a,\nabla u\rangle\right)\dx \right)\notag\\
&+\frac{\theta}{(1-\varsigma)^2r^2}\underset{B(x_0,r)}{\int}|u-a|^2\dx+\theta|(\nabla_\mathrm{n}u)_{B^+(x_0,r)}|^2r^n(1-\varsigma^n).
\end{align*}
Since this holds for any $\varepsilon>0$, it readily implies that
\begin{align}\label{postCacciopVMOmin}
&\mu(B(x_0,\varsigma r))-\underset{B(x_0,\varsigma r)}{\int}|\nabla u|^2\dx\notag\\
\leq& \theta\left( \mu({B[x_0,r]})-\underset{B(x_0,r)}{\int}|\nabla u|^2\dx \right)+\frac{\theta}{(1-\varsigma)^2r^2}\underset{B(x_0,r)}{\int}|u-a|^2\dx\notag\\
&+\theta|(\nabla_\mathrm{n}u)_{B^+(x_0,r)}|^2r^n(1-\varsigma^n)+\underset{B(x_0,r)}{\int}|\nabla u-\nabla a|^2\dx.
\end{align}

In addition, for all but a countable number of $r\in (0,1-|x_0|)$ we have that $\nu(\partial B(x_0,\varsigma r) )=0$. Then, from (\ref{postCacciopVMOmin}) we deduce that, for such $r\in (0,1-|x_0|)$,
\begin{equation}\label{CacciopforNu}
\nu({B[x_0,\varsigma r]})\leq \theta\nu({B[x_0,r]})+\left(\frac{\varepsilon_1(r)}{(1-\varsigma)^2}+\varepsilon_2(r)(1-\varsigma^n)+\varepsilon_3(r)\right)r^n
\end{equation}
for every $\varsigma\in (0,1)$, where
\begin{equation*}
\varepsilon_1(r):=\frac{\theta}{r^{n+2}}\underset{ B^+(x_0,r)}{\int}|u-a|^2\dx,\mbox{\hspace*{7mm}}\varepsilon_2(r):=\theta|(\nabla_\mathrm{n}u)_{B^+(x_0,r)}|^2
\end{equation*}
and
\begin{equation*}
\varepsilon_3(r):=\frac{1}{r^n}\underset{ B^+(x_0,r)}{\int}|\nabla u-\nabla a|^2\dx.
\end{equation*}
Since $u$ is of class $C^1$ in $B^+\cup\Gamma$,  $\varepsilon_2(r)\rightarrow\theta|\nabla_\mathrm{n} u(x_0)\otimes \mathrm{e_n}|^2$ as $r\rightarrow 0^+$. Furthermore, Corollary \ref{corollequivnormaldec} implies that $\varepsilon_3(r)\rightarrow 0$. Using this, the Poincar\'{e}-type estimate from Proposition \ref{Poincnormalderiv} and Cauchy-Schwarz inequality, we also obtain that $\varepsilon_1(r)\rightarrow 0$.

In order to prove (\ref{nulargerLebmeas}), observe first that, if $\limsup_{r\rightarrow 0^+}\frac{\nu({B[x_0,r]})}{r^n}=0$, the claim trivially follows. We therefore assume that
\begin{equation}\label{limsuppositive}
\underset{r\rightarrow 0^+}{\limsup}\frac{\nu({B[x_0,r]})}{r^n}>0. 
\end{equation}
This implies, in particular, that $\nu{(B[x_0,r])}>0$ for all $r>0$ and, therefore, we can rewrite (\ref{CacciopforNu}) for all but a countable number of $r\in (0,1-|x_0|)$,  so that 
\begin{equation*}
\frac{\nu({B[x_0,\varsigma r]})}{\nu({B[x_0,r]})}\leq\theta+\left( \frac{\varepsilon_1(r)}{(1-\varsigma)^2}+\varepsilon_2(r)(1-\varsigma^n)+\varepsilon_3(r)\right)\frac{r^n}{\nu(B[x_0,r])}.
\end{equation*}
 We can now take the limit superior as $r\rightarrow 0^+$ and deduce that, for $\varsigma\in (0,1)$, 
\begin{align}\label{afterlimr}
&\underset{r\rightarrow 0^+}{\limsup}\frac{\nu(B[x_0,\varsigma r])}{\nu(B[x_0,r])}\notag\\
\leq &\, \theta+\underset{r\rightarrow 0^+}{\limsup}\left(\left( \frac{\varepsilon_1(r)}{(1-\varsigma)^2}+\varepsilon_2(r)(1-\varsigma^n)+\varepsilon_3(r) \right)\frac{r^n}{\nu(B[x_0,r])}\right)\notag\\
\leq&\, \theta+\theta|\nabla_\mathrm{n} u(x_0)|^2(1-\varsigma^n) \underset{r\rightarrow 0^+}{\limsup}\frac{r^n}{\nu(B[x_0,r])}.
\end{align}
We will now show that, if (\ref{limsuppositive}) holds, then
\begin{equation}\label{expressiongeqvarsigma}
\underset{r\rightarrow 0^+}{\limsup}\frac{\nu(B[x_0,\varsigma r])}{\nu(B[x_0,r])}\geq \varsigma^n.
\end{equation}
We proceed by a contradiction argument and observe that, if $\underset{r\rightarrow 0^+}{\limsup}\frac{\nu(B[x_0,\varsigma r])}{\nu(B[x_0,r])}<\varsigma^n$, then we can take $\gamma<1$ and $r_\gamma>0$ such that
\begin{equation*}
\nu(B[x_0,\varsigma r])<\gamma\varsigma^n\nu(B[x_0,r])
\end{equation*}
for every $r\leq r_\gamma$. We can then iterate this and obtain that, for every $k\in\N^+$, 
\begin{equation*}
\nu(B[x_0,\varsigma^k r])<(\gamma\varsigma^n)^k\nu(B[x_0,r]).
\end{equation*}
We now fix a sequence $(r_j)$ such that $r_j\rightarrow 0^+$ and
\begin{equation*}
\underset{j\rightarrow\infty}{\lim}\frac{\nu(B[x_0,r_j])}{r_j^n}=\underset{r\rightarrow 0^+}{\limsup}\frac{\nu(B[x_0,r])}{r^n}. 
\end{equation*}
Since $\varsigma\in (0,1)$, we have that $(0,r_\gamma]=\bigcup_{k=0}^\infty[\varsigma^{k+1}r_\gamma,\varsigma^k r_\gamma]$. Hence, as $j\rightarrow\infty$ we have that
\begin{equation*}
k_j:=\sup\left\{\ k\st r_j\in [\varsigma^{k+1}r_\gamma,\varsigma^k r_\gamma] \right\}\rightarrow\infty.
\end{equation*}
On the other hand, since $r_j\leq \varsigma^{k_j}r_\gamma$ and $\varsigma^{k_j+1}r_\gamma\leq r_j$, 
\begin{equation*}
\frac{\nu(B[x_0,r_j])}{r_j^n}\leq\frac{\nu(B[x_0,\varsigma^{k_j}r_\gamma])}{(\varsigma^{k_j+1}r_\gamma)^n}<\frac{1}{\varsigma^n}\gamma^{k_j}\frac{\nu(B[x_0,r_\gamma])}{r_\gamma^n}\rightarrow 0
\end{equation*}
as $j\rightarrow\infty$, which contradicts (\ref{limsuppositive}).

Thus, from (\ref{afterlimr}) and (\ref{expressiongeqvarsigma}) we conclude that  
\begin{equation*}
\varsigma^n\leq\theta+\theta|\nabla_\mathrm{n} u(x_0)|^2(1-\varsigma^n)\underset{r\rightarrow 0^+}{\limsup}\frac{r^n}{\nu(B[x_0,r])}
\end{equation*}
for every $\varsigma\in (0,1)$. We now take the limit as $\varsigma\rightarrow 1^-$ and deduce that, since $0<\theta<1$, it must hold that
\begin{equation*}
\underset{r\rightarrow 0^+}{\limsup}\frac{r^n}{\nu(B[x_0,r])}=\infty.
\end{equation*}
This means that (\ref{nulargerLebmeas}) is valid for every $x_0\in \Gamma$. 

The proof for the interior case, $x_0\in B^+$, can be performed in the exact same way, with the main difference being that we consider whole instead of only half balls in the derivation of (\ref{postCacciopVMOmin}), and taking only $r>0$ small enough that $B(x_0,r)\subseteq B^+$. Additionally, we consider the affine function $b(y):= (u)_{x_0,r}+(\nabla u)_{x_0,r}\cdot(x-x_0)$ instead of $a(y)$ and we use the Caccioppoli inequality (\ref{ecthetaint}) instead of (\ref{ecthetabdry}). 

Furthermore, in order to prove that 
\begin{equation*}
\tilde{\varepsilon}_3(r):=\frac{1}{r^n}\underset{B(x_0,r)}{\int}|\nabla u-\nabla b|^2\dx\rightarrow 0
\end{equation*}
as $r\rightarrow 0^+$, we only need to use that $u$ is of class $C^1$, and not Corollary \ref{corollequivnormaldec}. That 
\begin{equation*}
\tilde{\varepsilon}_1(r):=\frac{\theta}{r^{n+2}}\underset{B(x_0,r)}{\int}| u- b|^2\dx\rightarrow 0
\end{equation*}
follows then by the standard Poincar\'{e} inequality. The rest of the argument can be carried out in a completely analogous way than for boundary points. 

Having shown (\ref{nulargerLebmeas}) for every $x_0\in B$, we now fix  $\sigma\in (0,1)$ and use Vitali's covering theorem (see \cite[p.35]{EvGar}), as well as (\ref{nulargerLebmeas}), to obtain for a given $\varepsilon>0$ a countable family of disjoint open balls $\{B_i\}_{i\in I}$, such that $B_i \subseteq B$,
\begin{equation*}
\nu\left(B[0,\sigma]\backslash \bigcup_{i\in I}B_i\right)=0
\end{equation*} 
and 
\begin{equation*}
\nu(B_i)<\varepsilon\mathcal{L}^n(B_i)
\end{equation*}
for every $i\in I$. Then, $\nu(B[0,\sigma])\leq\varepsilon\mathcal{L}^n(B)$ and, therefore, $\nu\measurerestr B=0$. In other words, $\mu\measurerestr B=|\nabla u|^2\mathcal{L}^n\measurerestr B$, which means that
\begin{equation*}
\underset{B^+(0,\sigma)}{\int}\left(|\nabla u_j|^2+\lambda_j^{p-2}|\nabla u_j|^p\right)\dx\rightarrow\underset{B^+(0,\sigma)}{\int}|\nabla u|^2\dx
\end{equation*}
for each $\sigma\in (0,1)$. This implies, in turn, that $\nabla u_j\rightarrow\nabla u$ strongly in $\LL^2(B^+(0,\sigma),\R^{N\times n})$ and, when $p>2$, $\lambda_j^{p-2}\underset{B^+(0,\sigma)}{\int}|\nabla u_j|^p\rightarrow 0$.
 
We can now take limit inferior in (\ref{eccontr}) and apply Corollary \ref{corollequivnormaldec} to obtain that
\begin{equation*}
c_*\underset{B^+(0,\tau)}{\mint}|\nabla u-(\nabla u)_{0,\tau}|^2\dx+\tau^{2\delta\alpha}\geq \underset{B^+(0,\tau)}{\mint}|\nabla u-(\nabla_\mathrm{n} u)_{0,\tau}\otimes\mathrm{e_n}|^2\dx+\tau^{2\delta\alpha}\geq C\tau^{2\delta\alpha}, 
\end{equation*}
which is a contradiction to (\ref{ec2forcont}) if we take $C>\gamma_0\,c_*+1$ and $r=\tau$. 

This proves that, given $m>0$, there exists a $C=C(m)>0$ with the property that, for every $\delta\in (0,1)$ and for each $\tau\in(0,\frac{1}{2})$, there is an $\varepsilon=\varepsilon(m,\tau,\delta)>0$ such that, if $x\in \Gamma$, $B^+(x_0,r)\subseteq B^+$ and  $E(x,r)<\varepsilon$, then
\begin{equation*}
E(x,\tau r)<C\tau^{2\delta\alpha}E(x,r).
\end{equation*}
\end{proof}

\begin{proof}[Proof of Proposition \ref{PropInteriorDecay}]
The proof for the decay in the interior case in the context of $\W^{1,q}$-local minimizers can be found in \cite{Kristensen} for homogeneous integrands. For the non-homogeneous case and, in particular for $\mathrm{BMO}$-local minimizers, the main modification is the one required to obtain the Caccioppoli inequality for interior points, as in Step 2.2. Hence, we do not elaborate any further on this point. We refer the interested reader to \cite[Chapter 3]{JCCThesis} for further details regarding the homogeneous setting. 
\end{proof}
The proof of the main theorem of this section is now straightforward. 
\begin{proof}[Proof of Theorem \ref{BdryRegularityTheorem}]
For $x_0\in \Omega_0$, a standard iteration of Proposition \ref{PropInteriorDecay} enables us to conclude that, for $\delta\in (0,1)$, 
\begin{equation}\label{FinalDecay}
E(x,r)\leq Lr^{2\delta\alpha}
\end{equation}
in a neighbourhood of $x_0$ for some constant $L:=L(\delta,\alpha, m)$, where $m$ and $R>0$ are such that $|(\nabla\overline{u})_{x_0,R}|<m$ and $0< r\leq R$. 
\begin{equation*}
E(x_0,r)\leq Lr^{2\delta\alpha}.
\end{equation*}
On the other hand, if $x_0\in \Sigma_0$, the regularity assumption made on $\Omega$ enables us to find a neighbourhood $B(x_0,R_0)$ and a diffeomorphism $\Phi: \overline{B^+}\rightarrow \overline{\Omega(x_0,R_0)} $ of class $C^{1,\alpha}$ such that $\Phi(\Gamma)=B(x_0,R_0)\cap\partial\Omega$. Then, by considering $\tilde{F}\colon B^+\rightarrow\R$ given by
\begin{equation*}
\tilde{F}(x,z):=F(\Phi(x),z(\nabla\,\Phi(x))^{-1})|\mathrm{det}\nabla\Phi(x)|,
\end{equation*}
we have that $\tilde{u}:=\overline{u}\circ\Phi$ is a corresponding local minimizer of the functional 
\begin{equation*}
v\mapsto \underset{B^+}{\int}\tilde{F}(\cdot,\nabla v)\dx.
\end{equation*}
Observe that $\tilde{F}$ satisfies all assumptions $(\mathrm{\overline{H}}0)$-$(\mathrm{\overline{H}}2)$ and $(\mathrm{{HC}})$. Hence, we can assume without loss of generality that $\Omega=B^+$ and that $x_0\in \Gamma$.

Furthermore, we can use Corollary \ref{corollequivnormaldec} to conclude that also 
\begin{equation}
\underset{r\rightarrow 0}{\liminf} \,E(x_0,r)\rightarrow 0, 
\end{equation}
with $E(x_0,r)$ given as in (\ref{excessboundary}). Whereby, we can now iterate Proposition \ref{PropBoundaryDecay} to obtain some $R>0$ so that

\begin{equation*}
E(x_0,r)\leq Lr^{2\delta\alpha}
\end{equation*}
for $0<r<R$. Observe that this decay estimate is only available for points in $\Gamma$ and appears exclusively in terms of half balls. Hence, in order to obtain (\ref{FinalDecay}) in a neighbourhood of $x_0\in\Gamma$, we need to combine the estimates obtained for interior and half balls. We refer the interested reader to \cite[Theorem 2.3 \& Section 3.6]{Grotowskinonlin} and \cite[Section 5]{Becksubquadlowdim} for details on this procedure. 

By Campanato-Meyers characterization of H\"{o}lder continuity, and considering that $\Phi$ and $g$ are assumed to be of class $C^{1,\alpha}$, this implies that $\overline{u}\in C^{1,\beta}_{\mathrm{loc}}(\overline{\Omega},R^{N\times n})$ for every $\beta\in [0,\alpha)$.
\end{proof}

\section{A sufficiency theorem on the strong local minimality of Lipschitz extremals with VMO derivative}\label{SectGralizedSuff}
As it has been mentioned in the introduction, an outstanding question in connection with the problem of sufficient conditions for strong local minimizers concerns the \textit{a priori} regularity assumptions that the extremal is required to satisfy in order to obtain the vectorial version of Weierstrass' sufficient conditions. The proof of the sufficiency theorem obtained by Grabovsky and Mengesha \cite{GyM} in its original version, as well as the alternative strategy that we suggest here, make use of the $C^1$ regularity up to the boundary that the extremal satisfies in order to reach the conclusion. 

However, given that strong local minimizers can only be shown to be partially regular \cite{Kristensen}, having to assume full regularity is not an ideal situation. On the other hand, the example constructed by Kristensen and Taheri in \cite[Section 7]{Kristensen} implies that Lipschitz regularity is not enough to ensure the strong local minimality of an extremal at which the second variation is strictly positive.

The result that we present in this section aims at improving the regularity assumptions imposed on the extremal to obtain strong local minimality under Dirichlet boundary conditions. Considering that Lipschitz regularity has been shown be insufficient, we prove the following result for Lipschitz extremals whose derivative satisfies, in addition, being of vanishing mean oscillation. This is a natural assumption in view of the available characterizations of regular points for strong local minimizers. However, we remark that this result appears as a straightforward consequence of Theorem \ref{BdryRegularityTheorem} and, hence, it does not remove  yet the intrinsic nature of the full regularity assumption made on Theorem \ref{theoGM}.

\begin{theorem}\label{theosufficVMO}
Let $\Omega$ be a bounded domain of class $C^{1,\alpha}$ and $g\in C^{1,\alpha}(\overline{\Omega},\R^N)$ for a given  $\alpha\in(0,1)$. Assume that $F:\overline{\Omega}\times\R^{N\times n}\rightarrow\R$ satisfies  $(\mathrm{{H}C})$ and  $(\overline{\mathrm{H}}0)$-$(\overline{\mathrm{H}}2)$ for some $p\in [2,\infty)$. Let ${u}\in \W^{1,\infty}_g(\Omega,\R^{N\times n})$ be an extremal at which the second variation is strictly positive, i.e., satisfying (\ref{uextremalVMO}) and (\ref{secvarposVMO}), and such that $\nabla{u}\in\mathrm{VMO}_{\nabla g}(\Omega,\R^{N\times n})$. Then, $u$ is an $\LL^p$-local minimizer. 
\end{theorem}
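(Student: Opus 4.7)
The plan is to chain together the three main results developed in the previous sections, using Theorem \ref{theosufficVMO} as the pay-off of the sufficiency/regularity package assembled in the paper. The Lipschitz extremal $u$ satisfies (\ref{uextremalVMO}) and (\ref{secvarposVMO}), so I would first invoke Theorem \ref{morethanWLMVMO} to conclude that there exists $\delta_1>0$ such that
\begin{equation*}
\underset{\Omega}{\int}F(\cdot,\nabla u)\dx\leq \underset{\Omega}{\int}F(\cdot,\nabla u+\nabla\varphi)\dx
\end{equation*}
for every $\varphi\in C_0^\infty(\Omega,\R^N)$ with $[\nabla\varphi]_{\mathrm{BMO}}\leq\delta_1$. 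By density, this extends to $\mathrm{Var}(\mathcal{A})=\W^{1,p}_0(\Omega,\R^N)$-variations admitting such a BMO control (at this point the hypothesis $p\geq 2$ of the current statement and the strong quasiconvexity $(\overline{\mathrm{H}}2)$ from Theorem \ref{morethanWLMVMO} are both available, so this first step goes through).

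Next, I would feed $u$ into Theorem \ref{BdryRegularityTheorem}(b). The assumptions are exactly met: $\Omega$ is $C^{1,\alpha}$, $g\in C^{1,\alpha}(\overline{\Omega},\R^N)$, $F$ satisfies $(\overline{\mathrm{H}}0)$–$(\overline{\mathrm{H}}2)$ and $(\mathrm{HC})$ for some $p\in[2,\infty)$, the function $u$ lies in $\W^{1,\infty}_g(\Omega,\R^N)$, is a BMO-local minimizer by the previous paragraph, and by hypothesis $\nabla u\in\mathrm{VMO}_{\nabla g}(\Omega,\R^{N\times n})$. Theorem \ref{BdryRegularityTheorem}(b) then yields $u\in C^{1,\beta}(\overline{\Omega},\R^N)$ for every $\beta\in[0,\alpha)$. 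In particular, $u$ is a $C^1$ extremal up to the boundary.

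Having upgraded $u$ to class $C^1(\overline{\Omega},\R^N)$, I would close the argument by applying the non-homogeneous version of the Grabovsky--Mengesha sufficiency theorem pointed out in Remark \ref{RemarkSuffNonhom}. Under Dirichlet boundary data ($\Gamma_N=\emptyset$), hypothesis $(\mathrm{H}3)$ is automatic when $p=2$ and, for $p>2$, it follows from a G\r{a}rding-type argument identical to Case~1 in the proof of Theorem \ref{theoGM}, using only $(\overline{\mathrm{H}}1)$--$(\overline{\mathrm{H}}2)$ and the fact that admissible variations vanish on $\partial\Omega$. The uniform continuity of $F_{zz}$ required in Remark \ref{RemarkSuffNonhom} is a consequence of $(\mathrm{HC})$ together with the continuity hypothesis in $(\overline{\mathrm{H}}0)$. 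With (\ref{uextremalVMO})--(\ref{secvarposVMO}) transferred verbatim to the newly regular $u$, the sufficiency theorem delivers the desired $\LL^p$-local minimality.

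The main conceptual obstacle has already been resolved upstream: bridging the gap between the merely Lipschitz hypothesis on $u$ and the $C^1$ regularity required by Theorem \ref{theoGM} is precisely what Theorems \ref{morethanWLMVMO} and \ref{BdryRegularityTheorem}(b) are designed to accomplish, and the VMO assumption on $\nabla u$ is exactly the hinge that makes the boundary regularity argument apply globally rather than only on a singular-set complement. Thus the proof reduces to checking that the three results compose without mismatch in the admissible classes of variations and in the exponent $p$, which is routine under the present hypotheses.
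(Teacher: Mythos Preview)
Your approach is exactly that of the paper: apply Theorem \ref{morethanWLMVMO} to obtain BMO-local minimality, feed this into Theorem \ref{BdryRegularityTheorem}(b) to upgrade $u$ to $C^{1,\beta}(\overline{\Omega})$, and then invoke the non-homogeneous sufficiency result via Remark \ref{RemarkSuffNonhom}.

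One caveat on your third step: the claim that, for $p>2$, the coercivity $(\overline{\mathrm{H}}3)$ follows from the G\r{a}rding-type argument in Case~1 of the proof of Theorem \ref{theoGM} is not accurate. That computation only yields $\tilde{c}_3\int|\nabla u+\nabla\varphi|^p\leq \int F(\cdot,\nabla u+\nabla\varphi)+\text{const}$, which after rearranging gives $c\int|\nabla\varphi|^p-C\leq \int(F(\cdot,\nabla u+\nabla\varphi)-F(\cdot,\nabla u))$ with a \emph{constant} subtracted, not $c_5\int|\nabla\varphi|^2$. The stronger form $(\overline{\mathrm{H}}3)$ is what Step~3 of Theorem \ref{theoGM} actually uses to bound $\beta_k^p/\alpha_k^2$ (not merely $\beta_k^p$); the remark after Theorem \ref{theoGM} explicitly says it is ``not obvious'' whether this can be removed for $p>2$. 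The paper's own proof of Theorem \ref{theosufficVMO} is equally terse here---it simply cites Remark \ref{RemarkSuffNonhom}, which \emph{assumes} $(\overline{\mathrm{H}}3)$---so the coercivity should be read as an implicit standing hypothesis for $p>2$ rather than as something derived.
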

\begin{proof}
Theorems \ref{morethanWLMVMO} and \ref{BdryRegularityTheorem} readily imply that $u\in C^{1,\beta}(\overline{\Omega},\R^N)$ for every $\beta\in[0,\alpha)$. By Theorem \ref{theoGM} and Remark \ref{RemarkSuffNonhom}, it follows immediately that $u$ is an $\LL^p$-local minimizer. 
\end{proof}

\section*{Acknowledgements}
Part of this work was developed while I was doing my doctoral studies at the University of Oxford. I am grateful for uncountable fruitful discussions with Jan Kristensen, which have largely contributed to the ideas in this paper. I also wish to thank John Ball, Lisa Beck, Nicola Fusco and Konstantinos Koumatos for their useful remarks, as well as to the anonymous referees, whose helpful suggestions improved this manuscript. I am grateful to CONACyT, Mexico, and to the Schlumberger Foundation, Faculty for the Future, for the financial support provided during the initial stages of this project, as well as to the University of Augsburg, where this work was concluded.
\newpage
%

\def\cprime{$'$} \def\cprime{$'$}

\end{document}